\newcommand{\rmnum}[1]{\romannumeral #1}
\newcommand{\Rmnum}[1]{\expandafter\@slowromancap\romannumeral #1@}
\newtheorem{theorem}{Theorem}[section]
\newtheorem{proposition}[theorem]{Proposition}
\newtheorem{corollary}[theorem]{Corollary}
\newtheorem{remark}[theorem]{Remark}
\newtheorem{lemma}[theorem]{Lemma}
\numberwithin{equation}{section}
\newcounter{wronumber}\setcounter{wronumber}{1}
\begin{document}

\title[Classification 4D Gradient Ricci Solitons]{On the classification of four-dimensional gradient Ricci solitons}

\author[F. Yang]{Fei Yang}
\address[Fei Yang]{\newline Corresponding author.\newline School of Mathematics and Physics, China University of Geosciences, Wuhan, 430074, P. R. China}
\email{yangfei810712@163.com}

\author[L. Zhang]{Liangdi Zhang}
\address[Liangdi Zhang]{\newline Center of Mathematical Sciences, Zhejiang University, Hangzhou, 310027, P. R. China}
\email{zhangliangdi@zju.edu.cn}

\thanks{This work is partially supported by Natural Science Foundation of China (No. 11601495) and Science Foundation for The Excellent Young Scholars of Central Universities (No. CUGL170213).}

\date{}


\begin{abstract}
In this paper, we prove some classification results for four-dimensional gradient Ricci solitons. For a four-dimensional gradient shrinking Ricci soliton with $div^4Rm^\pm=0$, we show that it is either Einstein or a finite quotient of $\mathbb{R}^4$, $\mathbb{S}^2\times\mathbb{R}^2$ or $\mathbb{S}^3\times\mathbb{R}$. The same result can be obtained under the condition of $div^4W^\pm=0$. We also present some classification results of four-dimensional complete non-compact gradient expanding Ricci soliton with non-negative Ricci curvature and gradient steady Ricci solitons under certain curvature conditions.

\vspace*{5pt}
\noindent{\it Keywords}:  Classification; Four dimension; Gradient Ricci solitons.

\noindent{\it 2010 Mathematics Subject Classification}: 53C21; 53C25.
\end{abstract}

\maketitle

\tableofcontents

\section{Introduction} 
\label{sec:1}


A complete Riemannian manifold $(M^n,g,f)$ is called a gradient Ricci soliton if there exists a smooth function $f$ on $M^n$ such that the Ricci tensor $Ric$ of the metric $g$ satisfies the equation
\begin{equation}\label{eq:1.1}
Ric+\nabla^2f=\lambda g.
\end{equation}
for some constant $\lambda$. For $\lambda>0$ the Ricci soliton is shrinking, for $\lambda=0$ it is steady and for $\lambda<0$ expanding. By scaling the metric $g$, one customarily assumes
$\lambda\in\{\frac{1}{2}, 0, -\frac{1}{2}\}$.

The classification of gradient Ricci solitons has been a subject of interest for many people in recent years. Z. H. Zhang\cite{zhang17} showed that an $n$-dimensional gradient Ricci shrinking soliton with vanishing Weyl twnsor is a finite quotient of $\mathbb{R}^n$, $\mathbb{S}^{n-1}\times\mathbb{R}$, or $\mathbb{S}^n$ (see also the works of  M. Eminenti-G. La Nave-C. Mantegazza \cite{EMN7}, X. Cao-B. Wang-Z. Zhang \cite{xcao} and P. Peterson-W. Wylie\cite{ref12}). Under the weaker condition of harmonic Weyl tensor, M. Fern\'{a}ndez-L\'{o}pez-E. Garc\'{i}a-R\'{i}o \cite{sref8} and O. Munteanu-N. Sesum.\cite{MS} proved that $n$-dimensional complete gradient shrinking solitons are rigid.

H. D. Cao and Q. Chen \cite{cao Bach} showed that an $n$-dimensional $(n\geq5)$ Bach-flat gradient shrinking Ricci soliton is either Einstein or a finite quotient of $\mathbb{R}^n$ or $\mathbb{R}\times N^{n-1}$, where $N$ is an $(n-1)$-dimensional Einstein manifold. G. Catino, P. Mastrolia and D. D. Monticelli \cite{CMM4} proved that a gradient shrinking Ricci soliton with fourth order divergence-free Weyl tensor (i.e. $div^4W=0$) is rigid.

In the case $n=4$, some stronger results for gradient Ricci shrinking solitons were obtained. A. Naber \cite{naber12} proved that a four-dimensional non-compact shrinking Ricci soliton with bounded nonnegative Riemannian curvature is a finite quotient of $\mathbb{R}^4$, $\mathbb{R}^2\times\mathbb{S}^2$ or $\mathbb{R}\times\mathbb{S}^3$. H. D. Cao and Q. Chen \cite{cao Bach} classified four-dimensional complete non-compact Bach-flat gradient shrinking Ricci solitons. Moreover, they also showed that a compact half-conformally flat (i.e. $W^\pm=0$) gradient shrinking Ricci soliton is isometric to the standard $\mathbb{S}^4$ or $\mathbb{C}P^2$. More generally, J. Y. Wu, P. Wu and W. Wylie \cite{wuwuwylie16} showed that a four-dimensional gradient shrinking Ricci soliton with half harmonic Weyl tensor (i.e. $divW^\pm=0$) is either Einstein or a finite quotient of $\mathbb{R}^4$, $\mathbb{R}^2\times\mathbb{S}^2$ or $\mathbb{R}\times\mathbb{S}^3$. Under the weaker condition of $div^4W^\pm=0$, we will classify four-dimensional gradient shrinking Ricci solitons in Theorem \ref{thm:1.2} of this paper.

Some classification theorems for gradient expanding Ricci solitons have been proved in the recent years by various authors. H. D. Cao et al. \cite{cao Bach s} proved that a complete Bach-flat gradient expanding Ricci soliton with non-negative Ricci curvature is rotationally symmetric. For a three-dimensional gradient expanding Ricci soliton with constant scalar curvature, P. Peterson and W. Wylie \cite{ref12} proved that it is a quotient of $\mathbb{R}^3$, $\mathbb{H}^2\times\mathbb{R}$, and $\mathbb{H}^3$. Moreover, G. Catino, P. Mastrolia and D. D. Monticelli \cite{ref6} showed that a three-dimensional complete gradient expanding Ricci soliton with non-negative Ricci curvature and the scalar curvature $R\in L^1(M^3)$ is isometric to a quotient of the Gaussian soliton $\mathbb{R}^3$.

On gradient steady Ricci solitons, H. D. Cao and Q. Chen \cite{ref3} proved that an $n$-dimensional complete non-compact locally conformally flat gradient steady Ricci soliton is either flat or isometric to the Bryant soliton. Moreover, H. D. Cao et al. \cite{cao Bach s} showed that a Bach-flat gradient steady Ricci soliton with positive Ricci curvature such that the scalar curvature $R$ attains its maximum at some interior point is isometric to the Bryant soliton up to a scaling. Under the conditions of asymptotically cylindrical and positive sectional curvature, S. Brendle \cite{ref2} proved that a steady gradient Ricci soliton  is isometric to the Bryant soliton up to scaling.

In dimension three, the classification of complete gradient steady Ricci solitons is still open. H. D. Cao et al. \cite{cao Bach s} proved that a three-dimensional gradient steady Ricci soliton with divergence-free Bach tensor is either flat or isometric to the Bryant soliton up to a scaling factor. In the paper by S. Brendle \cite{ref1}, it was shown that a three-dimensional complete non-flat and $\kappa$-noncollapsed gradient steady Ricci soliton is isometric to the Bryant soliton up to scaling.

When $n=4$, X. Chen and Y. Wang \cite{chenwang6} showed that a four-dimensional complete gradient steady Ricci soliton with $W^+=0$ must be isometric to the Bryant soliton (up to a scaling) or a manifold which is anti-self-dual and Ricci flat.

In this paper, we focus on the classification of four-dimensional gradient Ricci solitons. The main results of this paper are the following classification theorems for four-dimensional gradient Ricci solitons.

For four-dimensional gradient shrinking Ricci solitons, we have the following classification results.
\begin{theorem}\label{thm:1.1}
Let $(M^4,g,f)$ be a four-dimensional gradient shrinking Ricci soliton. If $div^4Rm^\pm=0$, then $(M^4,g,f)$ is either Einstein or a finite quotient of $\mathbb{R}^4$, $\mathbb{S}^2\times\mathbb{R}^2$ or $\mathbb{S}^3\times\mathbb{R}$.
\end{theorem}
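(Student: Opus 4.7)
\textbf{Proof plan for Theorem \ref{thm:1.1}.}

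The plan is to reduce the hypothesis $\operatorname{div}^4 Rm^\pm=0$ to the already-known hypothesis $\operatorname{div}W^\pm=0$ treated by Wu--Wu--Wylie, and then invoke their classification. First I would recall the four-dimensional Singer--Thorpe decomposition of the Riemann operator, writing
\[
Rm^\pm \;=\; W^\pm + \tfrac{R}{24}\,(g\owedge g)^\pm + (\text{mixed Ricci piece})^\pm,
\]
so that $\operatorname{div}Rm^\pm$ splits, via the contracted second Bianchi identity $\nabla^l R_{ijkl}=\nabla_i R_{jk}-\nabla_j R_{ik}$, into $\operatorname{div}W^\pm$ plus terms linear in $\nabla\operatorname{Ric}$ and $\nabla R$. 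Iterating this splitting, $\operatorname{div}^4 Rm^\pm$ becomes $\operatorname{div}^4 W^\pm$ modulo expressions polynomial in curvature and in $\nabla^k\operatorname{Ric}$, $\nabla^k R$ for $k\le 3$.

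The main computation is a weighted integral identity. I would pair the hypothesis $\operatorname{div}^4 Rm^\pm=0$ against $Rm^\pm$ and integrate against $e^{-f}\,dV$. Four integrations by parts against the drift Laplacian $\Delta_f=\Delta-\nabla f\!\cdot\!\nabla$, exploiting the soliton equation $\nabla^2 f=\lambda g-\operatorname{Ric}$ to trade $\nabla f$-terms for curvature terms, should yield a formula of schematic form
\[
\int_{M^4}\bigl|\operatorname{div}^2 Rm^\pm\bigr|^2 e^{-f}\,dV
\;=\; \int_{M^4} Q^\pm(Rm,\nabla Rm)\, e^{-f}\,dV,
\]
where $Q^\pm$ is built from soliton identities such as $\nabla R=2\operatorname{Ric}(\nabla f)$, $\Delta_f R=R-2|\operatorname{Ric}|^2$, $\Delta_f\operatorname{Ric}=\operatorname{Ric}-2\,\overset{\circ}{Rm}(\operatorname{Ric})$, and $\Delta_f W^\pm = W^\pm + \text{quadratic curvature}$. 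The required vanishing of boundary terms at infinity follows from the weighted $L^2$ integrability estimates for gradient shrinking Ricci solitons (polynomial curvature growth against the Gaussian weight $e^{-f}$, as established by Munteanu--Sesum and Cao--Zhou).

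The delicate part is to show that, after rearranging $Q^\pm$ via the Weitzenb\"ock formula for $W^\pm$ in dimension four, the right-hand side becomes a sum of non-negative quadratic forms in the relevant curvature quantities; vanishing of these would then force $\operatorname{div}W^\pm=0$. This is precisely the step that I expect to be the main obstacle: the scalar and Ricci contributions coming from $Rm^\pm\setminus W^\pm$ produce cross terms of indefinite sign, and controlling them requires a careful use of the four-dimensional identity $\operatorname{div}^2 W^\pm=\tfrac12 \operatorname{div}^2 W$ (which decouples the self-dual and anti-self-dual pieces) together with the structural identity $\nabla_i R_{jk}-\nabla_j R_{ik} = (\operatorname{div}W)_{ijk}+\tfrac{1}{6}(g_{jk}\nabla_i R-g_{ik}\nabla_j R)$ specialized to the half-Weyl setting.

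Once $\operatorname{div}W^\pm=0$ is established, the conclusion is immediate from the Wu--Wu--Wylie theorem cited above, giving that $(M^4,g,f)$ is either Einstein or a finite quotient of $\mathbb{R}^4$, $\mathbb{S}^2\times\mathbb{R}^2$, or $\mathbb{S}^3\times\mathbb{R}$.
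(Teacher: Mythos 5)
Your endgame is right---the paper also finishes by invoking the Wu--Wu--Wylie classification---but the core of your argument is missing, and you say so yourself: you never show that the quadratic terms produced by the integrations by parts are non-negative, and you explicitly flag the indefinite cross terms as an unresolved obstacle. That step is exactly where the content of the theorem lies, and the paper resolves it not by decomposing $Rm^\pm$ into $W^\pm$ plus Ricci/scalar pieces (your route), but by a different mechanism specific to the four-dimensional duality structure: one integration by parts reduces $\int div^4Rm^\pm\,e^{-f}$ to $\int div^3Rm^\pm(\nabla f)\,e^{-f}$, and then the key identity is
\begin{equation*}
\int div^3Rm^\pm(\nabla f)\,e^{-f}\;=\;-2\int \bigl|divRm^\pm\bigr|^2 e^{-f},
\end{equation*}
which rests on three ingredients you do not have: (i) the commutation formula $\nabla_j\nabla_l R_{i'j'kl}=-R_{i'j'kl}R_{jl}$ on a gradient soliton; (ii) the vanishing of the auxiliary integrals $\int R_{i'j'kl}R_{jl}\nabla_i f\nabla_k f\,e^{-f}$ and the resulting identity $\int R_{i'j'kl}R_{jl}\nabla_i\nabla_k f\,e^{-f}=-\int R_{i'j'kl}\nabla_iR_{jl}\nabla_k f\,e^{-f}$; and (iii) the pointwise identity $|divRm|^2\pm 2R_{i'j'kl}\nabla_iR_{jl}\nabla_k f=16\,|divRm^\pm|^2$, which is a perfect square precisely because $\nabla_kR_{ijkl}=R_{ijkl}\nabla_k f$ lets one recombine $R_{ij1l}\pm R_{i'j'1l}$. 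It is (iii) that eliminates the ``cross terms of indefinite sign'' you worried about; without it your schematic identity has no sign and the argument stalls.

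Two further concrete problems with your plan. First, your target quantity is $\int|div^2Rm^\pm|^2 e^{-f}$, but vanishing of the \emph{second} divergence is weaker than what is needed: the classification you want to cite requires $divW^\pm=0$, and the paper obtains it by proving $divRm^\pm\equiv 0$ (first divergence), which then forces $\nabla R=0$ and $divW^\pm=0$ via the trace identity $g^{ik}\nabla_lR^\pm_{ijkl}=\tfrac18\nabla_jR$. Second, the identity you invoke, $div^2W^\pm=\tfrac12\,div^2W$, is not justified: $(div^2W^\pm)_{ik}=\tfrac12(\nabla_j\nabla_lW_{ijkl}\pm\nabla_j\nabla_lW_{i'j'kl})$ and the dual-index term has no reason to drop out at the level of two divergences (the analogous cancellation in the paper, $\nabla_iR_{i'j'kl}=0$, holds only for the first covariant derivative slot via the second Bianchi identity). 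So as written the proposal has a genuine gap at its central step, and the auxiliary identities it leans on are either unproved or incorrect.
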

\begin{remark}\label{rmk:1.1}
From the proof of Theorem \ref{thm:1.1}, we know that the condition of $div^4Rm^\pm=0$ can be relaxed to $\int div^4Rm^\pm e^{-f}\geq0$. Moreover, it is clear from the proof of Theorem \ref{thm:1.1} that a four-dimensional gradient shrinking Ricci soliton with $div^3Rm^\pm(\nabla f)=0$ is either Einstein or a finite quotient of $\mathbb{R}^4$, $\mathbb{S}^2\times\mathbb{R}^2$ or $\mathbb{S}^3\times\mathbb{R}$.
\end{remark}

\begin{theorem}\label{thm:1.2}
Let $(M^4,g,f)$ be a four-dimensional gradient shrinking Ricci soliton. If $div^4W^\pm=0$, then $(M^4,g,f)$ is either Einstein or a finite quotient of $\mathbb{R}^4$, $\mathbb{S}^2\times\mathbb{R}^2$ or $\mathbb{S}^3\times\mathbb{R}$.
\end{theorem}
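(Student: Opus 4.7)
The strategy is to reduce the hypothesis $div^4 W^\pm = 0$ to the stronger condition $div\, W^\pm = 0$ via a chain of weighted integration-by-parts identities, and then invoke the Wu-Wu-Wylie classification \cite{wuwuwylie16}, which asserts that a four-dimensional gradient shrinking Ricci soliton with half-harmonic Weyl tensor is either Einstein or a finite quotient of $\mathbb{R}^4$, $\mathbb{S}^2 \times \mathbb{R}^2$, or $\mathbb{S}^3 \times \mathbb{R}$. This mirrors the method of Catino-Mastrolia-Monticelli \cite{CMM4}, who treated the full Weyl tensor case ($div^4 W = 0$), and parallels the scheme employed for Theorem \ref{thm:1.1}.

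Concretely, the natural measure on a gradient shrinking Ricci soliton is $e^{-f}\, dV$, with respect to which the drift Laplacian $\Delta_f := \Delta - \nabla_{\nabla f}$ is formally self-adjoint. For any decaying tensor $T$ and tensor $S$ of appropriate valence,
\begin{equation*}
\int_M \langle div\, T, S \rangle\, e^{-f}\, dV = -\int_M \langle T, \nabla S \rangle\, e^{-f}\, dV
\end{equation*}
up to boundary terms. Iterating this identity with $T = div^{k-1} W^\pm$ for $k=1,2,3,4$, and using the soliton equation $Ric + \nabla^2 f = \frac{1}{2}g$ together with the second Bianchi identity to convert each factor of $\nabla f$ arising from contractions with the weight into curvature-algebraic expressions, I would aim for a schematic identity of the form
\begin{equation*}
\int_M |div\, W^\pm|^2\, e^{-f}\, dV = \int_M \langle div^4 W^\pm, \Phi(W^\pm) \rangle\, e^{-f}\, dV,
\end{equation*}
where $\Phi(W^\pm)$ is a polynomial in $W^\pm$ and its covariant derivatives. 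The hypothesis $div^4 W^\pm = 0$ then forces $div\, W^\pm \equiv 0$, and \cite{wuwuwylie16} yields the classification.

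The main obstacle is two-fold. First, the integration by parts on a non-compact soliton must be justified via decay at infinity: this is handled using the Cao-Zhou estimate $\frac{1}{4}(r(x) - c)^2 \le f(x) \le \frac{1}{4}(r(x) + c)^2$ together with the at-most polynomial volume growth of shrinking solitons, which together provide the super-exponential decay of $e^{-f}$ necessary to discard boundary terms in each step of the chain. Second and more delicate, each iteration generates cross terms involving $\nabla f$ that must be organized so that the self-dual/anti-self-dual splitting is preserved throughout; while projection onto $\Lambda^2_\pm$ commutes with $\nabla$ and with $div$, it interacts nontrivially with Ricci-curvature contractions produced when $\nabla^2 f$ is replaced by $\frac{1}{2}g - Ric$. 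Arranging the algebraic cancellations so that the final right-hand side depends only on $div^4 W^\pm$, with no uncontrolled cross terms of lower divergence order, is the technical heart of the argument. Once this cascade of identities is carried out, the reduction $div^4 W^\pm = 0 \Rightarrow div\, W^\pm = 0$ is established, and the Wu-Wu-Wylie classification concludes the proof.
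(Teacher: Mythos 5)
Your high-level plan (weighted integration by parts against $e^{-f}$, reduction to half-harmonic Weyl curvature, then the Wu--Wu--Wylie classification \cite{wuwuwylie16}) is the same skeleton as the paper's proof, but the step you defer as "the technical heart" is precisely the content of the proof, and the schematic identity you aim for is not the one that actually holds. A single integration by parts gives $\int div^4W^\pm\,\phi(f)e^{-f}=\int div^3W^\pm(\nabla f)(\phi(f)-\phi'(f))e^{-f}$, but from there you cannot reach $\int|div\,W^\pm|^2e^{-f}$ by further formal integrations by parts: the paper instead uses soliton- and dimension-specific identities, namely $div^3W^\pm(\nabla f)=div^3Rm^\pm(\nabla f)+\frac{1}{48}|\nabla R|^2$ (Lemma \ref{lem:3.10}), and $\int div^3Rm^\pm(\nabla f)\phi(f)e^{-f}=-2\int|divRm^\pm|^2\phi(f)e^{-f}$ (Lemma \ref{lem:4.6}), whose proof rests on the four-dimensional dual-pair algebra $R_{i'j'ik}=0$, $\nabla_iR_{i'j'kl}=0$, the identity $\nabla_j\nabla_lR_{i'j'kl}=-R_{i'j'kl}R_{jl}$, and the frame identity $|divRm|^2\pm2R_{i'j'kl}\nabla_iR_{jl}\nabla_kf=16|divRm^\pm|^2$ (Lemma \ref{lem:3.9}). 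The outcome is $0=-2\int|divRm^\pm|^2(\phi-\phi')e^{-f}+\frac{1}{48}\int|\nabla R|^2(\phi-\phi')e^{-f}$, in which the two terms have \emph{opposite} signs, so $div^4W^\pm=0$ alone does not force anything to vanish; the argument closes only because of the additional pointwise inequality $|divRm^\pm|^2\geq\frac{1}{48}|\nabla R|^2$ (Proposition \ref{prop:3.5}, i.e. $|div\,W^\pm|^2=|divRm^\pm|^2-\frac{1}{48}|\nabla R|^2\geq0$). Your proposal contains no mechanism producing a sign-definite right-hand side, and a clean identity of the form $\int|div\,W^\pm|^2e^{-f}=\int\langle div^4W^\pm,\Phi(W^\pm)\rangle e^{-f}$ is not what the computation yields; without the $Rm^\pm$ detour and the inequality above, the deduction $div^4W^\pm=0\Rightarrow div\,W^\pm=0$ is not established.

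A secondary but genuine gap is the treatment of boundary terms: on a noncompact shrinker the decay of $e^{-f}$ by itself does not justify discarding them, since there is no a priori growth or weighted-integrability control on $|W^\pm|$, $|\nabla W^\pm|$, or $|\nabla^2W^\pm|$. The paper avoids this by using compactly supported cutoffs $\phi(f)$ built from the Cao--Zhou potential estimates and by arranging each identity so that only Ricci-type quantities and exact divergences appear (Propositions \ref{prop:4.1} and \ref{prop:4.3}); your proposal asserts the boundary terms vanish by decay, which is not justified as stated. The endgame you propose is correct and matches the paper: once $divRm^\pm=0$ (hence $div\,W^\pm=0$ by Proposition \ref{prop:3.3}), analyticity upgrades the a.e.\ vanishing to identical vanishing and \cite{wuwuwylie16} gives the classification.
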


\begin{remark}\label{rmk:1.2}
From the proof of Theorem \ref{thm:1.2}, we know that the condition of $div^4W^\pm=0$ can be relaxed to $\int div^4W^\pm e^{-f}\geq0$. Moreover, it is clear from the proof of Theorem \ref{thm:1.2} that a four-dimensional gradient shrinking Ricci soliton with $div^3W^\pm(\nabla f)=0$ is either Einstein or a finite quotient of $\mathbb{R}^4$, $\mathbb{S}^2\times\mathbb{R}^2$ or $\mathbb{S}^3\times\mathbb{R}$.
\end{remark}

For four-dimensional complete non-compact gradient expanding Ricci solitons with non-negative Ricci curvature, we will prove the following classification theorems.

\begin{theorem}\label{thm:1.3}
Let $(M^4,g,f)$ be a four-dimensional complete non-compact gradient expanding Ricci soliton with non-negative Ricci curvature. If $div^4Rm^\pm=0$, then $(M^4,g,f)$ is a finite quotient of $\mathbb{R}^4$.
\end{theorem}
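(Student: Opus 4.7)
The plan is to parallel the proof of Theorem \ref{thm:1.1} for the shrinking case, with the soliton constant replaced by $\lambda = -\tfrac{1}{2}$, and then to use the additional hypothesis $Ric \geq 0$ to exclude all but the flat outcome from the resulting classification. The starting point is the standard soliton identity
\[
\nabla_i R_{jk} - \nabla_j R_{ik} = R_{ijkl}\nabla^l f,
\]
obtained by differentiating the soliton equation and commuting covariant derivatives, together with the second Bianchi identity; these together express both $div\,Rm$ and $Rm(\nabla f,\cdot,\cdot,\cdot)$ in terms of $\nabla Ric$. Splitting the curvature operator on $\Lambda^2$ into its self-dual and anti-self-dual components $Rm^\pm$ in dimension four, one then sets up the same weighted integration by parts against $e^{-f}$ used in the proof of Theorem \ref{thm:1.1}. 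The hypothesis $div^4 Rm^\pm = 0$ (or the weaker integral relaxation $\int div^4 Rm^\pm\, e^{-f}\geq 0$ from Remark \ref{rmk:1.1}) then reduces, after four applications of the divergence theorem and the above soliton identities, to a non-negativity statement of the schematic form
\[
0 \geq \int_M \bigl(|\nabla Rm^\pm|^2 + \text{non-negative algebraic terms}\bigr)\, e^{-f}\, dV,
\]
which forces $\nabla Rm^\pm \equiv 0$ together with the same algebraic rigidity used in the shrinking proof. This places $(M^4,g,f)$ on the same list as in Theorem \ref{thm:1.1}: either Einstein or a finite quotient of $\mathbb{R}^4$, $\mathbb{S}^2\times\mathbb{R}^2$, or $\mathbb{S}^3\times\mathbb{R}$.

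The main technical obstacle will be making the weighted integration by parts rigorous on a complete non-compact manifold in the expanding regime. In the shrinking case one relies on the Gaussian decay of $e^{-f}$ supplied by the Cao-Zhou estimate, but in the expanding case the soliton equation gives $\nabla^2 f = -\tfrac{1}{2} g - Ric \leq -\tfrac{1}{2} g$ under $Ric \geq 0$, so $f$ is strongly concave and $e^{-f}$ actually grows at infinity. To compensate, one invokes the Hamilton-type conservation law $R + |\nabla f|^2 + f = \mathrm{const}$ together with $R \geq 0$ (which follows from the maximum principle for the scalar curvature on gradient expanding solitons with non-negative Ricci curvature) to obtain sharp control on $f$ and $|\nabla f|$; combining this with cutoff arguments and Shi-type derivative estimates for $Rm$ then allows one to pass to the limit and justify the formal identities. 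This is the step where the non-negativity of $Ric$ plays its essential role, beyond its final use below.

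Once the rigidity is obtained, the conclusion is immediate. An Einstein expanding soliton with $\lambda = -\tfrac{1}{2}$ has $Ric = -\tfrac{1}{2}g$, which is incompatible with $Ric \geq 0$. The product solitons $\mathbb{S}^2\times\mathbb{R}^2$ and $\mathbb{S}^3\times\mathbb{R}$ carry only shrinking gradient Ricci soliton structures, since the spherical factor forces $\lambda>0$; hence they cannot appear with $\lambda = -\tfrac{1}{2}$. The only remaining possibility is a finite quotient of $\mathbb{R}^4$, which is the claimed conclusion.
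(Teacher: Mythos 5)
There are two genuine gaps, one analytic and one structural. First, the weight: you integrate against $e^{-f}$ and then try to repair the fact that $e^{-f}$ grows at infinity by invoking $R+|\nabla f|^2+f=\mathrm{const}$ together with cutoffs and Shi-type estimates. This does not close. On an expander with $Ric\ge 0$, Lemma \ref{lem:2.5} gives $-f\sim \tfrac14 r^2$, so $e^{-f}$ grows like $e^{r^2/4}$ and $|\nabla f|$ grows linearly; no pointwise curvature estimate (none is assumed, and Shi-type estimates give bounds, not decay) makes integrals such as $\int|divRm^\pm|^2e^{-f}$ finite or kills the boundary terms produced by four integrations by parts. The correct move, and what the paper does in Section \ref{sec: 6}, is to use the \emph{decaying} weight $e^{+f}$ together with the cutoff $\phi(-f)$, which is compactly supported precisely because of Lemma \ref{lem:2.5} (this is where $Ric\ge 0$ first enters); the identities of Proposition \ref{prop:6.1}, Proposition \ref{prop:6.2} and Lemma \ref{lem:6.3} then yield $\int div^4Rm^\pm\,\phi(-f)e^{f}=2\int|divRm^\pm|^2(\phi(-f)-\phi'(-f))e^{f}$, and $div^4Rm^\pm=0$ forces $divRm^\pm\equiv0$. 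Note also that the quantity this argument controls is $|divRm^\pm|^2$, not $|\nabla Rm^\pm|^2$ as in your schematic inequality; the stronger conclusion $\nabla Rm^\pm\equiv0$ is not what the integral identity produces.

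Second, even granting $divRm^\pm\equiv 0$, you cannot ``place $(M^4,g,f)$ on the same list as in Theorem \ref{thm:1.1}'' and then prune it with $Ric\ge 0$: that list rests on Lemma \ref{lem:2.4} (Wu--Wu--Wylie), which is a classification of \emph{shrinking} solitons with $divW^\pm=0$, and no expanding analogue is established here, so your elimination of the Einstein and product cases is an argument about a classification you do not have. The paper's expanding proof proceeds differently: $divRm^\pm=0$ forces $\nabla R=0$ by the trace identity \eqref{eq:3.3}, hence $R$ is a non-negative constant; the identity $\Delta_fR=-R-2|Ric|^2$ then forces $R=0$ and $Ric\equiv0$; finally condition (2) of Lemma \ref{lem:2.6} and Lemma \ref{lem:2.7} give radial flatness, constant scalar curvature and rigidity, and $Ric\equiv0$ identifies $(M^4,g,f)$ as a finite quotient of the Gaussian soliton $\mathbb{R}^4$. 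You need this (or some substitute for the missing expanding classification) to reach the conclusion.
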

\begin{remark}\label{rmk:1.3}
From the proof of Theorem \ref{thm:1.3}, we know that the condition of $div^4Rm^\pm=0$  can be relaxed to $\int div^4Rm^\pm e^{-f}\leq0$. Moreover, it is clear from the proof of Theorem \ref{thm:1.3} that a four-dimensional  complete non-compact gradient expanding Ricci soliton with $Ric\geq0$ and $div^3Rm^\pm(\nabla f)=0$ is a finite quotient of $\mathbb{R}^4$.
\end{remark}

\begin{theorem}\label{thm:1.4}
Let $(M^4,g,f)$ be a four-dimensional complete non-compact gradient expanding Ricci soliton with non-negative Ricci curvature. If $div^4W^\pm=0$, then $(M^4,g,f)$ is a finite quotient of $\mathbb{R}^4$.
\end{theorem}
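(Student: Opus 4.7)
The plan is to follow the strategy of Theorem \ref{thm:1.2}, adapted to the expanding setting $\lambda=-1/2$ with the non-negativity $Ric\geq 0$ replacing the shrinking-specific sign features. The overall aim is to convert the hypothesis $div^4 W^\pm=0$ (or its relaxation $\int div^4 W^\pm e^{-f}\leq 0$) into a pointwise rigidity forcing $W^\pm\equiv 0$, and then to invoke the classification of half-conformally flat expanding solitons with $Ric\geq 0$ to finish.

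First, I would iterate the weighted divergence identity $\int_{M^4} div(T)\,e^{-f} = \int_{M^4}\langle T,\nabla f\rangle\,e^{-f}$, combined with the second Bianchi identity for $W$ and the soliton identity $\nabla_i R_{jk}-\nabla_j R_{ik}=R_{ijkl}f^l$ obtained by differentiating \eqref{eq:1.1}, to transform the left-hand side into a weighted Bochner-type expression. Schematically, I expect an identity of the form
\[
\int_{M^4} div^4 W^\pm \,e^{-f}\,dV = \int_{M^4}|T|^2\,e^{-f}\,dV + 2\lambda\int_{M^4}\mathcal{Q}(W^\pm, Ric)\,e^{-f}\,dV,
\]
where $T$ is an appropriate covariant derivative of $W^\pm$ (such as $\nabla(div W^\pm)$ or $\nabla^2 W^\pm$) and $\mathcal{Q}$ is an algebraic quadratic pairing which, via the splitting $\Lambda^2 = \Lambda^+\oplus\Lambda^-$ in dimension four, controls a non-negative multiple of $R\,|W^\pm|^2$ whenever $Ric\geq 0$.

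For $\lambda=-1/2<0$ and $Ric\geq 0$, both the $|T|^2$ and the $\mathcal{Q}$ contributions on the right make the right-hand side non-negative, while the hypothesis makes the left-hand side non-positive. This forces both $T\equiv 0$ and $R\,|W^\pm|^2\equiv 0$ pointwise. The standard Hamilton-type identity $R+|\nabla f|^2-f=\mathrm{const}$ for expanding solitons, combined with $Ric\geq 0$ and non-compactness, rules out the vanishing of $R$ on an open set supporting $W^\pm\neq 0$, yielding $W^\pm\equiv 0$ globally. The soliton is therefore half-conformally flat, and by the classification of four-dimensional half-conformally flat gradient expanders with non-negative Ricci (proceeding as in the arguments of \cite{chenwang6} and \cite{cao Bach s}) one concludes that $(M^4,g)$ must be Ricci-flat, and hence flat; thus it is a finite quotient of $\mathbb{R}^4$.

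The main obstacle is the derivation of the key integral identity in the first step. Four successive integrations by parts against $e^{-f}$ each introduce terms involving $\nabla f$ or $\nabla^2 f = \lambda g - Ric$, and commuting covariant derivatives generates further curvature terms; reassembling these contributions into a clean $|T|^2 + \lambda\,\mathcal{Q}$ structure with the correct signs---parallel to but with opposite sign in the $\lambda$-terms compared to the shrinking computation of Theorem \ref{thm:1.2}---is the computational heart of the argument, and it is precisely here that the hypothesis $Ric\geq 0$ becomes indispensable for closing the sign argument.
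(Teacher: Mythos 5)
There is a genuine gap: the computational heart of your argument is never carried out, and the identity you conjecture is not the one that makes the proof work. You posit $\int div^4W^\pm e^{-f}=\int|T|^2e^{-f}+2\lambda\int\mathcal{Q}(W^\pm,Ric)e^{-f}$ with $\mathcal{Q}\gtrsim R|W^\pm|^2\ge0$, but with $\lambda=-\tfrac12$ the term $2\lambda\mathcal{Q}$ is then nonpositive, so even schematically the claimed nonnegativity of the right-hand side does not follow; moreover, by Lemma \ref{lem:2.5} one has $-f\sim r^2/4$ on an expander with $Ric\ge0$, so the weight $e^{-f}$ \emph{grows} quadratically-exponentially and your integrals need not converge (the paper integrates against the decaying weight $e^{f}$ together with the cutoff $\phi(-f)$ precisely for this reason). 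The paper's actual mechanism is different and does not pass through $W^\pm$ at all: Lemma \ref{lem:3.10} gives $div^3W^\pm(\nabla f)=div^3Rm^\pm(\nabla f)+\tfrac1{48}|\nabla R|^2$, Lemma \ref{lem:6.3} and Remark \ref{rmk:6.2} give the weighted identity $\int div^3Rm^\pm(\nabla f)(\cdot)e^{f}=-2\int|divRm^\pm|^2(\cdot)e^{f}$, and the pointwise inequality $|divRm^\pm|^2\ge\tfrac1{48}|\nabla R|^2$ of Proposition \ref{prop:3.5} then forces $\int|\nabla R|^2=0$, i.e.\ $R$ constant; combined with $Ric\ge0$ and $\Delta_fR=-R-2|Ric|^2$ this yields $Ric\equiv0$, and Petersen--Wylie rigidity (Lemmas \ref{lem:2.6}--\ref{lem:2.7}) gives the flat quotient. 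Nowhere is $W^\pm\equiv0$ established, nor is it needed.

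Independently, your closing step cannot be salvaged even if you somehow obtained $W^\pm\equiv0$: there is no classification asserting that a complete non-compact half-conformally flat gradient expander with $Ric\ge0$ is Ricci-flat, hence flat. The result of \cite{chenwang6} concerns \emph{steady} solitons, and \cite{cao Bach s} gives only rotational symmetry for Bach-flat expanders with $Ric\ge0$. In fact, the rotationally symmetric (Bryant-type) expanding gradient solitons with positive curvature are locally conformally flat, so they satisfy $W^\pm\equiv0$, are complete, non-compact, with $Ric\ge0$, and are not flat; thus the implication ``$W^\pm=0$ and $Ric\ge0$ implies flat'' that your last paragraph relies on is false as stated, and the route through half-conformal flatness cannot close. (A minor further slip: for expanders with $\lambda=-\tfrac12$ the Hamilton identity reads $R+|\nabla f|^2+f=\mathrm{const}$, not $R+|\nabla f|^2-f=\mathrm{const}$.) To prove the theorem along the paper's lines you must instead extract constancy of $R$ from the fourth-order hypothesis via Proposition \ref{prop:3.5} and the weighted identities of Section \ref{sec: 6}, and then conclude through Ricci-flatness and rigidity.
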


\begin{remark}\label{rmk:1.4}
From the proof of Theorem \ref{thm:1.4}, we know that the condition of $div^4W^\pm=0$ can be relaxed to $\int div^4W^\pm e^{-f}\leq0$.  Moreover, it is clear from the proof of Theorem \ref{thm:1.4} that a four-dimensional  complete non-compact gradient expanding Ricci soliton with $Ric\geq0$ and  $div^3Rm^\pm(\nabla f)=0$ is a finite quotient of $\mathbb{R}^4$.
\end{remark}

For four-dimensional non-trivial complete non-compact steady Ricci solitons, we will show the following classification theorems.
\begin{theorem}\label{thm:1.5}
Let $(M^4,g,f)$ be a non-trivial complete non-compact four-dimensional gradient steady Ricci soliton with $\int|Rm|^2e^{\alpha f}<+\infty$ for some constant $\alpha\in\mathbb{R}$. If $div^3Rm^\pm(\nabla f)=0$, then $(M^4,g,f)$ is a finite quotient of $\mathbb{R}^4$.
\end{theorem}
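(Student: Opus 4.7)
The plan is to adapt the weighted integration-by-parts scheme underlying Theorems \ref{thm:1.1}--\ref{thm:1.4} to the steady setting $\lambda=0$. The essential difference is that a steady soliton carries no distinguished probability measure built from $f$: the role played by $e^{-f}$ in the shrinking case must be taken by the generic weight $e^{\alpha f}$, and the hypothesis $\int |Rm|^2 e^{\alpha f}<+\infty$ is precisely what forces the relevant weighted $L^2$ integrals to converge and the cut-off boundary terms to disappear in the limit.

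First I would record the standard steady soliton identities $R_{ij}+\nabla_i\nabla_j f=0$, $R+\Delta f=0$, $\nabla R=2\,Ric(\nabla f)$, together with Hamilton's identity $R+|\nabla f|^2=C$. Combined with $R\geq 0$, the last one yields the uniform bound $|\nabla f|\leq\sqrt{C}$, which is particular to the steady case and will be used whenever $\nabla f$ appears inside an integrand.

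The main step mirrors the argument sketched in Remark \ref{rmk:1.1} for the shrinking case. Iterating the twice-contracted second Bianchi identity and substituting $\nabla^2 f=-Ric$, I would derive a pointwise identity of the schematic form
\[
div^4 Rm^\pm \;=\; div\,\Phi \;+\; C_1\, div^3 Rm^\pm(\nabla f) \;+\; Q\bigl(Rm,\ div^k Rm^\pm\bigr),
\]
where $\Phi$ is an auxiliary tensor and $Q$ is a quadratic curvature remainder. Pairing this against $\phi_r^2 e^{\alpha f}$ for a smooth cut-off $\phi_r$ of $B(p,2r)$ and integrating, the divergence term collapses to boundary integrals controlled by $|\nabla\phi_r|\leq C/r$, which tend to zero as $r\to\infty$ by Cauchy--Schwarz, once Shi-type estimates have bootstrapped the hypothesis $\int|Rm|^2 e^{\alpha f}<+\infty$ to analogous bounds on the higher covariant derivatives of $Rm$. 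The pointwise hypothesis $div^3 Rm^\pm(\nabla f)\equiv 0$ kills the linear term, so what remains is a non-negative expression, schematically $\int|div^2 Rm^\pm|^2 e^{\alpha f}$ plus curvature-definite pieces, which must therefore vanish. Iterating the same scheme downward forces first $div^2 Rm^\pm\equiv 0$, then $div\, Rm^\pm\equiv 0$, and finally $Rm^\pm\equiv 0$ on both the self-dual and anti-self-dual blocks. This kills $W$ and $R$, so by the Cao--Chen classification \cite{ref3} the manifold is locally conformally flat and, being steady with $R\equiv 0$, is flat; completeness and non-compactness then give the stated conclusion.

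The main obstacle will be this cut-off step: $e^{\alpha f}$ is not a finite measure, and the single hypothesis $\int|Rm|^2 e^{\alpha f}<+\infty$ must be bootstrapped to control several higher-derivative boundary integrals. This calls for careful Shi-type propagation of derivative bounds along the eternal flow induced by the soliton, tracking $\alpha$ through each integration by parts (it reappears every time $e^{\alpha f}$ is differentiated) and ensuring that the resulting boundary terms still decay fast enough as $r\to\infty$ to close the iteration.
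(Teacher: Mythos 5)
Your overall strategy (weighted integration by parts against $e^{\alpha f}$, using $R\geq 0$ and $R+|\nabla f|^2=\mathrm{const}$ to bound $|\nabla f|$) points in the right direction, but the proposal has two genuine gaps. First, the analytic core is unjustified: you propose to ``bootstrap'' the single hypothesis $\int|Rm|^2e^{\alpha f}<+\infty$ to weighted bounds on higher covariant derivatives of $Rm$ via Shi-type estimates, but Shi estimates require pointwise curvature bounds (not assumed here) and in any case do not yield weighted $L^2$ control of derivatives; without such control your cut-off boundary terms cannot be discarded. The paper never needs this: using the soliton equation and the Bianchi identities it converts every higher-derivative term into curvature-quadratic terms \emph{before} integrating (Proposition \ref{prop:3.6}: $\nabla_j\nabla_lR_{i'j'kl}=-R_{i'j'kl}R_{jl}$; equation \eqref{eq:2.2}: $div^3Rm(\nabla f)=-\frac12|divRm|^2$; Propositions \ref{prop:8.1}--\ref{prop:8.2}), so the only boundary term over $\partial B_r$ is $\int_{\partial B_r}|Rm||Ric||\nabla f|e^{\alpha f}$, which is handled directly by the hypothesis and the bound on $|\nabla f|$. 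The decisive quantitative ingredient you are missing is Lemma \ref{lem:3.9}, $|divRm|^2\pm 2R_{i'j'kl}\nabla_iR_{jl}\nabla_kf=16|divRm^\pm|^2$, which yields the inequality $\int div^3Rm^\pm(\nabla f)e^{\alpha f}\leq-2\int|divRm^\pm|^2e^{\alpha f}$ (Lemma \ref{lem:8.3}); with the pointwise hypothesis $div^3Rm^\pm(\nabla f)=0$ this immediately forces $\int|divRm^\pm|^2e^{\alpha f}=0$. Your schematic identity built around $div^4Rm^\pm$ does not match the hypothesis, and the claim that after killing the linear term ``what remains is $\int|div^2Rm^\pm|^2e^{\alpha f}$ plus curvature-definite pieces'' is asserted, not derived, and has no evident sign.

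Second, the endgame is wrong in structure: there is no reason the scheme can be ``iterated downward'' to give $Rm^\pm\equiv 0$; vanishing of weighted integrals of divergences does not kill the curvature itself, and if it did you would not need any classification result at all. The correct chain (and the paper's) is: $divRm^\pm\equiv0$ by analyticity; tracing (\eqref{eq:3.3}) gives $\nabla R\equiv0$; then $0=\Delta_fR=-2|Ric|^2$ gives $Ric\equiv0$; the second Bianchi identity then gives $divRm=0$, i.e.\ radial flatness with constant scalar curvature; Petersen--Wylie rigidity (Lemma \ref{lem:2.7}) plus non-triviality of $f$ yields a finite quotient of $\mathbb{R}^4$. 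Your alternative appeal to the Cao--Chen locally conformally flat classification is both unnecessary (you had already claimed $Rm\equiv0$) and unsupported, since local conformal flatness was never established.
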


\begin{theorem}\label{thm:1.6}
Let $(M^4,g,f)$ be a non-trivial complete non-compact four-dimensional gradient steady Ricci soliton with $\int|Rm|^2e^{\alpha f}<+\infty$ for some constant $\alpha\in\mathbb{R}$. If $div^3W^\pm(\nabla f)=0$, then $(M^4,g,f)$ is a finite quotient of $\mathbb{R}^4$.
\end{theorem}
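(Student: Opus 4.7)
The plan is to follow the integration-by-parts scheme used for Theorems~\ref{thm:1.2} and \ref{thm:1.4}, adapted to the steady setting, where the natural weight $e^{-f}$ of the shrinking case has to be replaced by $e^{\alpha f}$ and the boundary terms at infinity are controlled through the moment assumption $\int|Rm|^2 e^{\alpha f}<+\infty$.

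First, I would use the contracted second Bianchi identity in dimension four to express the divergences of $W^\pm$ in terms of the self-dual and anti-self-dual parts of the Cotton tensor and contractions with $\nabla f$ coming from the soliton equation $Ric+\nabla^2 f=0$. Iterating one more time and commuting covariant derivatives past divergences, I would obtain an identity of the schematic form
\begin{equation*}
div^3 W^\pm(\nabla f)=div\,(U^\pm)+Q^\pm,
\end{equation*}
where $U^\pm$ is a one-form built from $W^\pm$, $\nabla W^\pm$ and $Ric$, and $Q^\pm$ is a pointwise nonnegative quadratic expression in $\nabla W^\pm$, $W^\pm$ and the Cotton tensor.

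Next, I would multiply by $e^{\alpha f}$ and integrate over $M^4$, using smooth exhaustions and Shi-type derivative estimates to exploit the moment condition and make the boundary flux of $e^{\alpha f}U^\pm$ vanish in the limit. With $div^3 W^\pm(\nabla f)\equiv 0$, the identity collapses to $\int_{M^4}Q^\pm\,e^{\alpha f}\,dV=0$, which by the nonnegativity of $Q^\pm$ forces $W^\pm\equiv 0$ on $M^4$ together with additional rigidity constraints on $Ric$. Taking both signs gives $W\equiv 0$, and combining these constraints with the steady soliton equation $Ric=-\nabla^2 f$ yields $Ric\equiv 0$, hence $Rm\equiv 0$; the non-trivial soliton is therefore a finite quotient of $\mathbb{R}^4$ with linear potential $f$.

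The main obstacle I anticipate is the integration-by-parts step: in the steady case $e^{\alpha f}dV$ is not a probability measure, so one must design the cutoffs carefully so that the moment bound $\int|Rm|^2 e^{\alpha f}<+\infty$, supplemented by Shi estimates bootstrapped from the soliton equation, suffices to kill all boundary contributions coming from $U^\pm$ at infinity. A secondary delicate point is arranging the algebra so that the remainder $Q^\pm$ is manifestly nonnegative; this requires careful use of the soliton identities $\nabla R=2Ric(\nabla f)$ and $R+|\nabla f|^2=\text{const}$ in the commutation formulas, and should parallel the algebraic manipulations performed for Theorems~\ref{thm:1.2} and \ref{thm:1.4} specialized to $\lambda=0$.
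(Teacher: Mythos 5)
Your overall skeleton (integrate against the weight $e^{\alpha f}$, kill boundary terms with the moment condition, deduce vanishing of a nonnegative integrand, then invoke rigidity) is the right spirit, but the core of your argument rests on a decomposition that is neither established nor of the form that actually occurs. You posit $div^3W^\pm(\nabla f)=div(U^\pm)+Q^\pm$ with $Q^\pm\geq0$ quadratic in $W^\pm$, $\nabla W^\pm$ and the Cotton tensor, and claim the vanishing of $\int Q^\pm e^{\alpha f}$ forces $W^\pm\equiv0$. That is not what the integration by parts produces: the quantities that appear are first--derivative curvature quantities, and the correct identities are $div^3W^\pm(\nabla f)=div^3Rm^\pm(\nabla f)+\frac{1}{48}|\nabla R|^2$ (Lemma \ref{lem:3.10}) together with the weighted inequality $\int div^3Rm^\pm(\nabla f)e^{\alpha f}\leq-2\int|divRm^\pm|^2e^{\alpha f}$ (Lemma \ref{lem:8.3}; the boundary flux is handled by $\int|Rm|^2e^{\alpha f}<+\infty$ and the boundedness of $|\nabla f|$ from $R\geq0$ and $R+|\nabla f|^2=\mathrm{const}$ -- no Shi estimates, which would anyway require curvature bounds you have not assumed). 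The resulting relation $0\leq -2\int|divRm^\pm|^2e^{\alpha f}+\frac{1}{48}\int|\nabla R|^2e^{\alpha f}$ has two competing signs, so it yields nothing until one adds the pointwise algebraic inequality $|divRm^\pm|^2\geq\frac{1}{48}|\nabla R|^2$ (Proposition \ref{prop:3.5}); only then does one get $\int|\nabla R|^2e^{\alpha f}=0$, i.e.\ $R$ constant. Your proposal contains no substitute for this inequality, and nothing in the scheme ever controls $W^\pm$ itself (even $divW^\pm\equiv0$ would not give $W^\pm\equiv0$), so the claimed conclusion $W^\pm\equiv0$ is a genuine gap.

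The endgame is also off in two respects. First, the hypothesis is $div^3W^\pm(\nabla f)=0$ for one choice of sign only, so ``taking both signs'' to get $W\equiv0$ is not available. Second, the passage ``$W\equiv0$ plus constraints on $Ric$ yields $Ric\equiv0$, hence $Rm\equiv0$'' is unsubstantiated; the actual mechanism is that $R$ constant combined with the steady identity $\Delta_fR=-2|Ric|^2$ gives $Ric\equiv0$, then the second Bianchi identity gives $divRm=0$, so the soliton is radially flat with constant scalar curvature, hence rigid by Lemma \ref{lem:2.7}, and non-triviality then identifies it as a finite quotient of $\mathbb{R}^4$ exactly as in the proof of Theorem \ref{thm:9.1}. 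You would need to supply these steps (or equivalents) to close the argument.
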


We arrange this paper as follows. In Section \ref{sec: 2}, we fix the notations, recall some basic facts and known results about gradient Ricci solitons that we shall need in the proof of the main theorems. In Section \ref{sec: 3}, we prove some useful formulas of the Riemannian curvature. Before we prove the main results on four-dimensional gradient shrinking Ricci solitons, we prove some integral identities for four-dimensional gradient shrinking Ricci solitons in Section \ref{sec: 4}. In Section \ref{sec: 5}, we prove Theorem \ref{thm:1.1} and Theorem \ref{thm:1.2}. We obtain some integral identities for four-dimensional gradient expanding Ricci solitons with non-negative Ricci curvatures in Section \ref{sec: 6}. In Section \ref{sec: 7}, we finish the proof of Theorem \ref{thm:1.3} and Theorem \ref{thm:1.4}. We show some integral identities for four-dimensional gradient steady Ricci solitons in Section \ref{sec: 8} and prove two classification theorems for four-dimensional gradient steady Ricci solitons (Theorem \ref{thm:1.5} and Theorem \ref{thm:1.6}) in Section \ref{sec: 9}.

\section{Preliminaries} 
\label{sec: 2}

First of all, we recall that on any $n$-dimensional $(n\geq3)$ Riemannian manifold, the Weyl tensor is given by
\begin{eqnarray*}
W_{ijkl}&:=&R_{ijkl}-\frac{1}{n-2}(g_{ik}R_{jl}-g_{il}R_{jk}-g_{jk}R_{il}+g_{jl}R_{ik})\notag\\
&&+\frac{R}{(n-1)(n-2)}(g_{ik}g_{jl}-g_{il}g_{jk}).
\end{eqnarray*}
The Cotton tensor is defined as
$$C_{ijk}:=\nabla_iR_{jk}-\nabla_jR_{ik}-\frac{1}{2(n-1)}(g_{jk}\nabla_iR-g_{ik}\nabla_jR).$$

The relation between the Cotton tensor and the divergence of the Weyl tensor is
\[C_{ijk}=-\frac{n-2}{n-3}\nabla_lW_{ijkl}.\]

For any pair $(ij)$, $1\leq i\neq j\leq4$, denote $(i'j')$ to be the dual of $(ij)$, i.e., the pair such $e_i\wedge e_j\pm e_{i'}\wedge e_{j'}\in\wedge^{\pm}M^4$. In other words, $(iji'j')=\sigma(1234)$ for some even permutation $\sigma\in S_4$, i.e., $$(iji'j')\in\{(1234),(1342),(1423),(2143),(2314),(2431),(3124),(3241),(3412),(4132),(4213),(4321)\}.$$

 For any $(0,4)$-tensor $T$, its (anti-)self-dual part is
\[T^\pm_{ijkl}=\frac{1}{4}(T_{ijkl}\pm T_{ijk'l'}\pm T_{i'j'kl}+T_{i'j'k'l'}).\]

On four-manifolds, the Weyl tensor has sufficiently exotic symmetries.
\begin{proposition}[J. Y. Wu, P. Wu and W. Wylie \cite{wuwuwylie16}]\label{prop:2.0}
Let $(M, g)$ be a four-dimensional Riemannian manifold. Then
\[W_{ijkl}=W_{i'j'k'l'},\]
therefore,
\[W_{ijkl}^\pm=\pm W_{ijk'l'}^\pm=\pm W_{i'j'kl}^\pm=W_{i'j'k'l}^\pm=\frac{1}{2}(W_{ijkl}\pm W_{ijk'l'}).\]
\end{proposition}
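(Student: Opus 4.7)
The plan is to pass to the bivector picture on $\wedge^2T^*M^4$ and exploit the fact that in dimension four the Weyl operator commutes with the Hodge star. The Hodge star $*\colon \wedge^2\to \wedge^2$ is an involution in Riemannian signature in dimension four, and for an oriented orthonormal frame $*(e_i\wedge e_j) = e_{i'}\wedge e_{j'}$ whenever $(iji'j')$ is an even permutation of $(1234)$; the spaces $\wedge^{\pm}M^4$ are the $\pm 1$ eigenspaces, so the relation $e_i\wedge e_j \pm e_{i'}\wedge e_{j'}\in\wedge^{\pm}M^4$ merely rephrases this. I will also use that $*$ is self-adjoint on $\wedge^2$, i.e. $\langle *\alpha,\beta\rangle = \langle \alpha,*\beta\rangle$.

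The algebraic heart of the argument is the classical fact that the Weyl curvature operator $\mathcal{W}\colon \wedge^2\to\wedge^2$ commutes with $*$ in dimension four, equivalently $\mathcal{W}$ preserves the splitting $\wedge^2 = \wedge^+ \oplus \wedge^-$. This is a consequence of the pair symmetry $W_{ijkl} = W_{klij}$, the first Bianchi identity, and the Ricci-trace-freeness of $W$. I would either cite this standard fact or give a short componentwise check showing that the off-diagonal blocks of $\mathcal{W}$ between $\wedge^+$ and $\wedge^-$ vanish.

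With $\mathcal{W}\, * = *\, \mathcal{W}$ and the self-adjointness of $*$ in hand, applying $*$ in both slots yields
\[
W_{ijkl} = \langle e_i\wedge e_j,\, \mathcal{W}(e_k\wedge e_l)\rangle = \langle *(e_i\wedge e_j),\, \mathcal{W}(*(e_k\wedge e_l))\rangle = W_{i'j'k'l'},
\]
which is the first claim, while applying $*$ in a single slot gives the companion identity $W_{ijk'l'} = W_{i'j'kl}$. These two symmetries are then substituted into the general definition
\[
W^{\pm}_{ijkl} = \tfrac14\bigl(W_{ijkl} \pm W_{ijk'l'} \pm W_{i'j'kl} + W_{i'j'k'l'}\bigr),
\]
which immediately collapses to $\tfrac12(W_{ijkl}\pm W_{ijk'l'})$. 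The remaining equalities $W^{\pm}_{ijk'l'}=\pm W^{\pm}_{ijkl}$, $W^{\pm}_{i'j'kl}=\pm W^{\pm}_{ijkl}$, and $W^{\pm}_{i'j'k'l'}=W^{\pm}_{ijkl}$ follow by the same substitution, using the elementary observation that $(k')'=k$ because swapping the two pairs in an even $4$-permutation again gives an even $4$-permutation.

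The main obstacle is the commutation $\mathcal{W}\, * = *\, \mathcal{W}$: once it is in place, everything else is bookkeeping with dual index pairs and a one-line reduction of the definition of $W^{\pm}$. I expect to verify this commutation by a short index computation that uses the trace-freeness and first Bianchi identity of $W$ to kill the cross-terms between $\wedge^+$ and $\wedge^-$.
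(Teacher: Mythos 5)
The paper does not actually prove this proposition: it is quoted verbatim from Wu--Wu--Wylie \cite{wuwuwylie16} and used as a black box, so there is no in-paper argument to match yours against. Your route --- viewing $W$ as the operator $\mathcal{W}$ on $\wedge^2$, using self-adjointness and involutivity of the Hodge star and the commutation $\mathcal{W}*=*\mathcal{W}$, then collapsing the general formula $W^\pm_{ijkl}=\tfrac14(W_{ijkl}\pm W_{ijk'l'}\pm W_{i'j'kl}+W_{i'j'k'l'})$ --- is correct, and the bookkeeping you do is sound: the identities $\langle *\alpha,\mathcal{W}(*\beta)\rangle=\langle\alpha,\mathcal{W}\beta\rangle$ and $\langle\alpha,\mathcal{W}(*\beta)\rangle=\langle *\alpha,\mathcal{W}\beta\rangle$ give exactly $W_{ijkl}=W_{i'j'k'l'}$ and $W_{ijk'l'}=W_{i'j'kl}$, and your observation that $(k')'=k$ (swapping the two blocks of an even permutation of $(1234)$ is even) is what makes the remaining sign identities for $W^\pm$ come out right.

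The one caveat is that the ``main obstacle'' you defer, $\mathcal{W}*=*\mathcal{W}$, is not an auxiliary lemma but is logically equivalent to the identity $W_{ijkl}=W_{i'j'k'l'}$ itself (since $*$ is a self-adjoint involution on $\wedge^2$, $*\mathcal{W}*=\mathcal{W}$ in components is precisely that identity). So if you ``verify it by a short index computation using trace-freeness and the first Bianchi identity,'' you are in effect writing out the componentwise proof of the proposition rather than bypassing it; for example, in a mixed case such as $W_{1213}=W_{3442}$ the computation is exactly the trace-free relation $\sum_a W_{a2a3}=0$ together with the pair and skew symmetries. That is perfectly fine --- it is the standard Singer--Thorpe fact that the Weyl operator is block-diagonal with respect to $\wedge^2=\wedge^+\oplus\wedge^-$ in dimension four, and citing it (e.g.\ Singer--Thorpe, or Besse, \emph{Einstein Manifolds}, 1.126--1.128) is legitimate --- but you should either cite it explicitly or carry out that index check; as written, the proposal's central step is announced rather than executed.
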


It follows that
\[W^\pm_{ijkl}=\frac{1}{2}(W_{ijkl}\pm W_{i'j'kl}).\]
Therefore, we have
\[(divW^\pm)_{ijk}:=\nabla_lW^\pm_{ijkl}=\frac{1}{2}(\nabla_lW_{ijkl}\pm \nabla_lW_{i'j'kl}),\]
\[(div^2W^\pm)_{ik}:=\nabla_j\nabla_lW^\pm_{ijkl}=\frac{1}{2}(\nabla_j\nabla_lW_{ijkl}\pm \nabla_j\nabla_lW_{i'j'kl}),\]
and
\[(div^3W^\pm)_{i}:=\nabla_k\nabla_j\nabla_lW^\pm_{ijkl}=\frac{1}{2}(\nabla_k\nabla_j\nabla_lW_{ijkl}\pm \nabla_k\nabla_j\nabla_lW_{i'j'kl}),\]
\[(div^4W^\pm):=\nabla_i\nabla_k\nabla_j\nabla_lW^\pm_{ijkl}=\frac{1}{2}(\nabla_i\nabla_k\nabla_j\nabla_lW_{ijkl}\pm \nabla_i\nabla_k\nabla_j\nabla_lW_{i'j'kl}).\]

At the regular point of $f$, we denote by $\{e_i\}_{i=1}^4$ a local orthonormal frame of with $e_1=\frac{\nabla f}{|\nabla f|}$. We use $\{\alpha_i\}_{i=1}^4$ to represent eigenvalues of the Ricci tensor with corresponding orthonormal eigenvectors $\{e_i\}_{i=1}^4$.

Next, we recall some basic facts about complete gradient shrinking Ricci solitons.
\begin{proposition}[F. Yang and L. Zhang \cite{yangzhang}]\label{prop:2.1}
Let $(M^n,g)$ be a gradient Ricci soliton with \eqref{eq:1.1}. Then we have the following equations
\begin{equation}\label{eq:2.1}
(div^3Rm)_i=-R_{ijkl}\nabla_kR_{jl},
\end{equation}
\begin{equation}\label{eq:2.2}
div^3Rm(\nabla f)=-\frac{1}{2}|divRm|^2,
\end{equation}
\begin{equation}\label{eq:2.3}
div^3W(\nabla f)=\frac{n-3}{n-2}div^3Rm(\nabla f)+\frac{n-3}{4(n-1)(n-2)}|\nabla R|^2.
\end{equation}
\end{proposition}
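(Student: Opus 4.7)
The plan is to derive the three identities by systematic use of the contracted second Bianchi identity together with the two soliton identities that follow from differentiating \eqref{eq:1.1}:
\[\nabla_iR_{jk}-\nabla_jR_{ik}=R_{ijkl}\nabla^lf,\qquad R_{ij}\nabla^jf=\tfrac12\nabla_iR,\]
the first coming from taking $\nabla_k$ of \eqref{eq:1.1} and applying the Ricci commutator to $\nabla_jf$, the second from the contracted Bianchi identity $\nabla^jR_{ij}=\tfrac12\nabla_iR$ combined with \eqref{eq:1.1}.

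For \eqref{eq:2.1}, I would first combine Bianchi with the first soliton identity to get the clean formula $(divRm)_{ijk}=\pm R_{ijkl}\nabla^lf$, and then apply the remaining two divergences. Expanding $\nabla^k\nabla^j[R_{ijkl}\nabla^lf]$, each derivative lands either on $R_{ijkl}$ (producing $\nabla Rm$) or on $\nabla f$ (producing $\nabla^2f$); the soliton equation lets me substitute $\nabla^j\nabla^lf=\lambda g^{jl}-R^{jl}$ in every $\nabla^2f$ factor. The resulting derivatives of $Rm$ are converted to derivatives of $Ric$ via further Bianchi contractions, and the first soliton identity is invoked once more to turn $\nabla Ric$ back into $Rm\cdot\nabla f$ where needed. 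After these substitutions the Riemann symmetries force most terms to pair off, and the only survivor is $-R_{ijkl}\nabla_kR_{jl}$.

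For \eqref{eq:2.2}, I would contract \eqref{eq:2.1} with $\nabla^if$ and use the rearrangement $R_{ijkl}\nabla^if=\pm(\nabla_kR_{jl}-\nabla_lR_{jk})$ of the first soliton identity. The product expands into $|\nabla Ric|^2$ plus a cross term $\sum\nabla_lR_{jk}\nabla_kR_{jl}$. Squaring the Bianchi identity separately yields
\[|divRm|^2=2|\nabla Ric|^2-2\sum\nabla_iR_{jk}\nabla_jR_{ik},\]
which pins the cross term down to $|\nabla Ric|^2-\tfrac12|divRm|^2$; substituting leaves $div^3Rm(\nabla f)=-\tfrac12|divRm|^2$. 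For \eqref{eq:2.3}, I would use the Weyl-Riemann decomposition, equivalently $(divW)_{ijk}=\tfrac{n-3}{n-2}(divRm)_{ijk}+\tfrac{n-3}{2(n-1)(n-2)}(g_{jk}\nabla_iR-g_{ik}\nabla_jR)$ extracted from $C_{ijk}=-\tfrac{n-2}{n-3}\nabla^lW_{ijkl}$. Two more divergences give $(div^3W)_i=\tfrac{n-3}{n-2}(div^3Rm)_i+\tfrac{n-3}{2(n-1)(n-2)}[\Delta\nabla_iR-\nabla_i\Delta R]$, and the Ricci commutator applied to the one-form $\nabla R$ yields $\Delta\nabla_iR-\nabla_i\Delta R=R_{im}\nabla^mR$. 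Finally, the second soliton identity $R_{im}\nabla^if=\tfrac12\nabla_mR$ turns this correction into $\tfrac{n-3}{4(n-1)(n-2)}|\nabla R|^2$ after contracting with $\nabla^if$.

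The main obstacle is bookkeeping rather than anything conceptual: signs in the Ricci commutator depend on convention, and every reorder of covariant derivatives spawns curvature-times-curvature terms that must be shown either to cancel by the Riemann symmetries or to reduce to one of the two soliton identities. Part \eqref{eq:2.3} is essentially a single commutator computation once the Weyl-Riemann relation is unpacked, but part \eqref{eq:2.1} demands the most careful chasing of indices.
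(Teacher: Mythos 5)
Your proposal is correct and follows the standard route: it is essentially the derivation behind the cited source \cite{yangzhang}, and it mirrors the computations this paper itself performs for the dualized analogues (Proposition \ref{prop:3.6}, Lemmas \ref{lem:3.9} and \ref{lem:3.10}), namely writing $divRm$ via the contracted second Bianchi identity and the soliton identity $\nabla_jR_{ik}-\nabla_iR_{jk}=R_{ijkl}\nabla_lf$, substituting $\nabla^2f=\lambda g-Ric$, and using Ricci commutators together with $R_{ij}\nabla_jf=\tfrac12\nabla_iR$; the cancellations you assert in \eqref{eq:2.1} do occur, and your treatment of \eqref{eq:2.2} via $|divRm|^2=2|\nabla Ric|^2-2\nabla_iR_{jk}\nabla_jR_{ik}$ and of \eqref{eq:2.3} via the commutator $\Delta\nabla_iR-\nabla_i\Delta R=R_{im}\nabla_mR$ is exactly right. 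The only remaining work is the sign and index bookkeeping you already flag, which is mechanical.
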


\begin{lemma}[H. D. Cao and D. Zhou \cite{caozhou}]\label{lem:2.2}
Let $(M^n,g)$ be a complete gradient shrinking soliton with \eqref{eq:1.1}. Then,

(\rmnum{1}) the potential function f satisfies the estimates
\begin{equation}\label{eq:2.4}
\frac{1}{4}(r(x)-c_1)^2\leq f(x)\leq \frac{1}{4}(r(x)+c_2)^2,
\end{equation}
where $r(x)=d(x_0,x)$ is the distance function from some fixed point $x_0\in M$, $c_1$
and $c_2$ are positive constants depending only on $n$ and the geometry of $g$ on the
unit ball $B(x_0,1)$;

(\rmnum{2}) there exists some constant $C>0$ such that
\begin{equation}\label{eq:2.5}
Vol(B(x_0,s))\leq Cs^n
\end{equation}
for $s>0$ sufficiently large.
\end{lemma}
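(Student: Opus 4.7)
My plan is to reproduce the classical Cao--Zhou argument, which rests on two ingredients: the identity $R+|\nabla f|^2=f$ (after normalizing the additive constant) together with a second-variation estimate along minimizing geodesics. First I would establish the standard structural identities for a shrinking soliton with $\lambda=\tfrac{1}{2}$: tracing \eqref{eq:1.1} gives $R+\Delta f=\tfrac{n}{2}$, while contracting the second Bianchi identity against \eqref{eq:1.1} produces $\nabla R=2\operatorname{Ric}(\nabla f,\cdot)$. Combining these, the function $R+|\nabla f|^2-f$ is constant, so by shifting $f$ by an additive constant I can arrange that $R+|\nabla f|^2=f$. Since complete gradient shrinking solitons satisfy $R\ge 0$ (B.-L. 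Chen's maximum principle argument), we deduce $|\nabla f|^2\le f$ everywhere, and in particular $f\ge 0$.

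For the upper bound in (\rmnum{1}), the inequality $|\nabla f|^2\le f$ means $|\nabla\sqrt{f}|\le \tfrac{1}{2}$, so $\sqrt{f}$ is $\tfrac{1}{2}$-Lipschitz. Integrating along a minimizing geodesic from $x_0$ yields $\sqrt{f(x)}\le \sqrt{f(x_0)}+\tfrac{1}{2}r(x)$, which upon squaring gives $f(x)\le \tfrac{1}{4}(r(x)+c_2)^2$ for an appropriate $c_2$.

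The lower bound is the main technical step. After checking (via the Bochner-type identity $|\nabla f|^2\le f$) that $f$ attains its minimum at some point $p$, I would take $x_0=p$, let $\gamma:[0,s]\to M$ be a unit-speed minimizing geodesic from $p$ to $x$, and set $\varphi(t)=f(\gamma(t))$. From \eqref{eq:1.1} one has $\varphi''(t)=\tfrac{1}{2}-\operatorname{Ric}(\dot\gamma,\dot\gamma)$. The key task is to show $\int_0^s \operatorname{Ric}(\dot\gamma,\dot\gamma)\,dt$ grows only sub-linearly in $s$. This is achieved by an index-form argument: plugging appropriate test vector fields along $\gamma$ which vanish at the endpoints into the second-variation inequality of arclength yields $\int_0^s \operatorname{Ric}(\dot\gamma,\dot\gamma)\,dt \le (n-1)+O(1)$ uniformly in $s\ge 1$. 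Integrating the ODE for $\varphi$ then gives $f(x)=\varphi(s)\ge \tfrac{1}{4}s^2 - c_1 s$ for a constant depending only on the geometry of $g$ on a unit ball about $p$, which rearranges into the lower bound in (\rmnum{1}).

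For (\rmnum{2}), (\rmnum{1}) implies $B(x_0,s)\subset \{f\le \tfrac{1}{4}(s+c_2)^2\}$, so it suffices to estimate $V(r):=\operatorname{Vol}(\{f\le r\})$. By the co-area formula, $V(r)=\int_0^r\!\int_{\{f=\rho\}}|\nabla f|^{-1}\,d\sigma\,d\rho$, and using $R+|\nabla f|^2=f$ together with $R+\Delta f=\tfrac{n}{2}$ and Stokes' theorem on the sublevel sets $\{f\le \rho\}$, one obtains a differential inequality of the form $2\rho V'(\rho)\le n\,V(\rho)+ \text{lower order}$, whose integration yields $V(r)\le C r^{n/2}$, equivalent to $\operatorname{Vol}(B(x_0,s))\le C s^n$. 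The principal obstacle is the lower bound in Step~3: controlling $\int_0^s \operatorname{Ric}(\dot\gamma,\dot\gamma)\,dt$ uniformly in $s$ without any a priori curvature hypothesis, which requires the delicate index-form argument combined with $\nabla R=2\operatorname{Ric}(\nabla f,\cdot)$, and is where most of the technical effort lies.
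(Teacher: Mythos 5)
This lemma is quoted in the paper from Cao--Zhou \cite{caozhou} and no proof is given there, so the only meaningful comparison is with the original argument, which your outline indeed reproduces: the normalization $R+|\nabla f|^2=f$ together with $R\ge 0$ gives the upper bound via the $\tfrac12$-Lipschitz bound on $\sqrt f$, the lower bound comes from a second-variation estimate along minimizing geodesics, and the volume bound follows from the co-area/level-set identity built on $R+\Delta f=\tfrac n2$ and $|\nabla f|^2=f-R$. Two points in your sketch deserve correction, though neither is fatal. First, the preliminary claim that $|\nabla f|^2\le f$ lets you ``check that $f$ attains its minimum'' is both unjustified (that inequality alone does not force a minimum; properness of $f$ is exactly what the lower bound is meant to prove, so the step is circular) and unnecessary: the estimate is proved from an arbitrary fixed base point $x_0$, as in the statement. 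Second, in the key step the index form with endpoint-vanishing test fields only controls $\int_0^s\phi^2\,\mathrm{Ric}(\dot\gamma,\dot\gamma)\,dt\le (n-1)\int_0^s\dot\phi^2\,dt$; it does not by itself give a uniform bound on the full integral $\int_0^s\mathrm{Ric}(\dot\gamma,\dot\gamma)\,dt$, because on the last unit interval near $x$ there is no a priori curvature bound. The way Cao--Zhou close this is not via $\nabla R=2\,\mathrm{Ric}(\nabla f,\cdot)$, as you suggest, but via the soliton equation in the form $\mathrm{Ric}(\dot\gamma,\dot\gamma)=\tfrac12-\tfrac{d^2}{dt^2}f(\gamma(t))$ on $[s-1,s]$, followed by an integration by parts in which the boundary term $\tfrac{d}{dt}f(\gamma(t))\big|_{t=s}$ cancels, leaving a lower bound for a weighted average of $\tfrac{d}{dt}f(\gamma(t))$ near $t=s$; combined with $|\nabla f|\le\sqrt f$ this yields $\sqrt{f(x)}\ge\tfrac12 r(x)-c_1$. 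If you rewrite your Step~3 along these lines (and drop the minimum-point detour), your proposal is a faithful proof of the quoted lemma.
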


\begin{lemma}[O. Munteanu and N. Sesum \cite{MS}]\label{lem:2.3}
 For any complete gradient shrinking Ricci soliton with \eqref{eq:1.1}, we have
\begin{equation}\label{eq:2.6}
\int_M|Ric|^2e^{-\rho f}<+\infty
\end{equation}
for any $\rho>0$.
\end{lemma}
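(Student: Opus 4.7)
The plan is to exploit two standard identities for gradient shrinking Ricci solitons: the drift-Laplacian formula $\Delta_f R = R - 2|Ric|^2$ (where $\Delta_f := \Delta - \nabla_{\nabla f}$), obtained by tracing \eqref{eq:1.1} and applying the second Bianchi identity, together with the contracted Bianchi consequence $\nabla R = 2\,Ric(\nabla f)$. The first identity lets us write $|Ric|^2$ as a combination of $R$ and $\Delta_f R$, both of which can in principle be integrated against the Gaussian weight $e^{-\rho f}$ once we control the boundary/cutoff contributions.

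Concretely, choose a radial cutoff $\phi_s(x) = \eta(f(x)/s)$ from a fixed bump $\eta$ supported in $[0,2]$ with $\eta\equiv 1$ on $[0,1]$, multiply $\Delta_f R = R - 2|Ric|^2$ by $\phi_s^2 e^{-\rho f}$, and integrate to obtain
\begin{equation*}
2\int_M |Ric|^2 \phi_s^2 e^{-\rho f} = \int_M R\,\phi_s^2 e^{-\rho f} - \int_M \Delta_f R\cdot \phi_s^2 e^{-\rho f}.
\end{equation*}
Integration by parts against the measure $e^{-\rho f}$ (the boundary term vanishes by compact support of $\phi_s$) rewrites the last integral as
\begin{equation*}
-\int_M \nabla R\cdot \nabla(\phi_s^2)\,e^{-\rho f} + (\rho-1)\int_M (\nabla R\cdot \nabla f)\,\phi_s^2 e^{-\rho f},
\end{equation*}
and substituting $\nabla R = 2\,Ric(\nabla f)$ reduces both remaining pieces to expressions involving only $|Ric|$, $|\nabla f|$, and $f$.

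The next step is a Cauchy--Schwarz absorption: $|\nabla R\cdot\nabla f| \leq 2|Ric||\nabla f|^2 \leq \varepsilon|Ric|^2 + C_\varepsilon|\nabla f|^4$, and analogously for the cutoff term. Combined with Hamilton's identity $R + |\nabla f|^2 = f + C$ (a consequence of \eqref{eq:1.1}) and the non-negativity of the scalar curvature on shrinkers due to B.-L. Chen, we obtain $|\nabla f|^2 \leq f + C$ and $R \leq f + C$. The potential function estimate $\tfrac14(r-c_1)^2\leq f \leq \tfrac14(r+c_2)^2$ and the volume growth bound $\mathrm{Vol}(B(x_0,s))\leq Cs^n$ from Lemma \ref{lem:2.2} then guarantee that $\int_M f^k e^{-\rho f} < +\infty$ for every $k \geq 0$, handling all the non-$|Ric|^2$ contributions.

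The main technical hurdle is controlling the cutoff gradient term: since $|\nabla(\phi_s^2)|\lesssim s^{-1}|\nabla f|$ is supported in the annular shell $\{s\leq f \leq 2s\}$, a further Cauchy--Schwarz bounds it by $\varepsilon\int_M|Ric|^2\phi_s^2 e^{-\rho f} + C\varepsilon^{-1}s^{-2}\int_{\{f\leq 2s\}}|\nabla f|^4 e^{-\rho f}$, and the second piece remains uniformly bounded in $s$ by the previous paragraph. Choosing $\varepsilon$ small enough to absorb the $|Ric|^2$ terms into the left-hand side and passing to the limit $s\to\infty$ delivers the conclusion. The delicate point is really the choice of $\phi_s$ as a function of $f$ itself, which is essential so that $\nabla\phi_s$ is concentrated where the Gaussian weight is already small; choosing a cutoff of the geodesic distance instead would leave error terms one cannot control without a pointwise bound on $|Ric|$.
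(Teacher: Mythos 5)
This lemma is not proved in the paper at all---it is quoted from Munteanu--Sesum \cite{MS}---and your argument is correct and essentially reproduces the standard proof given there: integrate the shrinker identity $\Delta_f R = R - 2|Ric|^2$ against $\phi_s^2 e^{-\rho f}$, integrate by parts, use $\nabla R = 2\,Ric(\nabla f)$, $R\ge 0$, $R+|\nabla f|^2 = f + \mathrm{const}$, and the Cao--Zhou estimates of Lemma \ref{lem:2.2} to reduce every error term to weighted integrals of powers of $f$, then absorb the $\varepsilon|Ric|^2$ pieces (legitimate since each $\int|Ric|^2\phi_s^2e^{-\rho f}$ is finite by compact support of $\phi_s$, as $f$ is proper) and let $s\to\infty$. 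The only quibble is your closing claim that a geodesic-distance cutoff would be unusable: after the same Cauchy--Schwarz absorption the cutoff error involves only $|\nabla f|^2|\nabla\phi|^2$, which $|\nabla f|^2\le f+C$ controls just as well, so that remark is inaccurate though harmless to the proof.
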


\begin{lemma}[J. Y. Wu, P. Wu and W. Wylie \cite{wuwuwylie16}]\label{lem:2.4}
A four-dimensional gradient shrinking Ricci soliton
with $divW^\pm=0$ is either Einstein, or a finite quotient $\mathbb{S}^3\times\mathbb{R}$, $\mathbb{S}^2\times\mathbb{R}^2$ or $\mathbb{R}^4$.
\end{lemma}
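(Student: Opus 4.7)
The plan is to reduce the statement to the vanishing of either $W^+$ or $W^-$, after which one can invoke H. D. Cao--Q. Chen's classification of half-conformally flat shrinking solitons (and its extension to the noncompact setting) together with Z. H. Zhang's result on shrinkers with vanishing Weyl tensor. The core analytic task is therefore to show that $\operatorname{div} W^\pm = 0$ forces $W^\pm \equiv 0$.

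The first step is to derive a weighted Bochner--Weitzenböck identity for $|W^\pm|^2$ on the shrinking soliton. Using the soliton equation \eqref{eq:1.1}, the second Bianchi identity, and the fact that on a four-manifold $W^\pm$ is the $\pm 1$-eigenspace of the Hodge star acting on $\Lambda^2$, one commutes covariant derivatives to get a formula of the schematic form
\begin{equation*}
\tfrac{1}{2}\Delta_f |W^\pm|^2 \;=\; |\nabla W^\pm|^2 + |W^\pm|^2 + Q(W^\pm) + L(\nabla W^\pm, \operatorname{div} W^\pm),
\end{equation*}
where $\Delta_f = \Delta - \langle \nabla f, \nabla\cdot\rangle$, the cubic term $Q(W^\pm)$ is purely algebraic in $W^\pm$, and $L$ is a bilinear expression in $\nabla W^\pm$ and $\operatorname{div} W^\pm$ that comes from the Bianchi identity $C_{ijk}=-2\nabla_l W_{ijkl}$. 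The hypothesis $\operatorname{div} W^\pm = 0$ kills $L$ outright.

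Next I would integrate this identity against the Gaussian weight $e^{-f}$. The estimates of Cao--Zhou (Lemma~\ref{lem:2.2}) give quadratic growth of $f$ and polynomial volume growth, while Munteanu--Sesum (Lemma~\ref{lem:2.3}) gives $\int |\mathrm{Ric}|^2 e^{-\rho f}<\infty$; together these justify integrating $\Delta_f |W^\pm|^2 \, e^{-f}$ by parts with vanishing boundary contribution (after multiplying by suitable cutoffs and passing to the limit). The outcome is the integral identity
\begin{equation*}
\int_{M^4} \bigl( |\nabla W^\pm|^2 + |W^\pm|^2 + Q(W^\pm) \bigr) e^{-f} \,=\, 0.
\end{equation*}

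The main obstacle is to prove that the cubic term $Q(W^\pm)$ is nonnegative (or can be absorbed). I would work in an orthonormal basis of $\Lambda^\pm$ that diagonalizes $W^\pm$: since $W^\pm$ is trace-free and symmetric on the rank-three bundle $\Lambda^\pm$, it has eigenvalues $w_1,w_2,w_3$ with $w_1+w_2+w_3 = 0$, and a direct computation of the kind used by Hamilton and by Derdzinski yields $Q(W^\pm) = 6\sqrt{6}\,\det(W^\pm)$ up to a constant, together with a good quadratic term. The sign/coercivity of $Q(W^\pm)$ relative to $|W^\pm|^2$ has to be extracted from the algebraic constraint $w_1+w_2+w_3=0$, and this is where the argument is delicate. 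Once the identity forces $|\nabla W^\pm|^2 = 0$ and $|W^\pm|^2 = 0$, we conclude $W^\pm \equiv 0$; then the soliton is half-conformally flat, and the classification of Cao--Chen together with the splitting theorems cited in the introduction identifies $(M^4,g,f)$ with one of $\mathbb{R}^4$, $\mathbb{S}^2\times\mathbb{R}^2$, $\mathbb{S}^3\times\mathbb{R}$, or an Einstein manifold.
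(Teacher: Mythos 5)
Your strategy hinges on showing that ${\rm div}\,W^\pm=0$ forces $W^\pm\equiv 0$, and this reduction cannot work: it is contradicted by the Einstein alternative that the lemma itself allows. Any Einstein four-manifold has harmonic Weyl tensor (by the second Bianchi identity, ${\rm div}\,W=0$, hence ${\rm div}\,W^\pm=0$), and concrete Einstein shrinkers such as $\mathbb{C}P^2$ with the Fubini--Study metric (a trivial shrinking soliton with constant $f$) satisfy the hypothesis while having $W^+\neq 0$. Consequently the cubic term $Q(W^\pm)$ in your weighted Bochner identity cannot be nonnegative: on $\mathbb{C}P^2$ the potential is constant and $|W^+|$ is a nonzero constant, so your identity $\int(|\nabla W^\pm|^2+|W^\pm|^2+Q(W^\pm))e^{-f}=0$ would force $Q(W^+)<0$ pointwise there. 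The ``delicate'' step you flag---extracting a sign for $Q$ from $w_1+w_2+w_3=0$---is therefore not delicate but impossible, and the whole route through half-conformal flatness collapses; note also that Cao--Chen's half-conformally-flat classification yields $\mathbb{S}^4$ or $\mathbb{C}P^2$ in the compact case, which already signals that the Einstein case must survive, not be eliminated.

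For comparison, the paper does not prove this statement at all: it is quoted from Wu--Wu--Wylie, whose argument is structurally different. Rather than killing $W^\pm$, they exploit the identity relating ${\rm div}\,W^\pm$ contracted with $\nabla f$ to (a self-dual/anti-self-dual analogue of) the Cao--Chen $D$-tensor, show that ${\rm div}\,W^\pm=0$ forces this tensor to vanish, and then split into the two alternatives: either the soliton is Einstein, or one obtains enough pointwise information (vanishing of $W^\pm(\nabla f,\cdot,\cdot,\cdot)$-type terms, constant scalar curvature, radial flatness) to invoke the Petersen--Wylie rigidity theorems and conclude the finite-quotient classification $\mathbb{R}^4$, $\mathbb{S}^2\times\mathbb{R}^2$, $\mathbb{S}^3\times\mathbb{R}$. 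If you want a self-contained proof, that is the framework to follow; a Weitzenb\"ock argument aimed at $W^\pm\equiv 0$ proves a statement that is simply false.
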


\begin{lemma}[H. D. Cao et al. \cite{cao Bach s}]\label{lem:2.5}
Let $(M^n,g_{ij},f)$ $(n\geq3)$ be a complete noncompact gradient expanding soliton
with nonnegative Ricci curvature $Rc\geq0$. Then, there exist some constants $c_1>0$ and $c_2>0$ such that the potential function $f$ satisfies the estimates
\begin{equation}\label{eq:2.7}
\frac{1}{4}(r(x)-c_1)^2-c_2\leq -f(x)\leq\frac{1}{4}(r(x)+2\sqrt{-f(O)})^2,
\end{equation}
where $r(x)$ is the distance function from any fixed base point in $M^n$. In particular, $f$ is a strictly concave exhaustion function achieving its maximum at some interior point $O$, which we take as the base point, and the underlying manifold $M^n$ is diffeomorphic to $\mathbb{R}^n$.
\end{lemma}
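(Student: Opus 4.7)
The plan is to combine the strict concavity of $f$ coming from the hypothesis $Ric \geq 0$ with the Hamilton-type conservation law $R + |\nabla f|^2 + f = \mathrm{const}$ to produce matching quadratic lower and upper bounds on $-f$, and then to extract the diffeomorphism with $\mathbb{R}^n$ from the Morse theory of a strictly concave exhaustion function.

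First, I would add a constant to $f$ so that the standard conserved quantity of the normalized expanding soliton ($\lambda = -\tfrac{1}{2}$) reads
\begin{equation*}
R + |\nabla f|^2 + f = 0.
\end{equation*}
The soliton equation $\nabla^2 f = -\tfrac{1}{2} g - Ric$ combined with $Ric \geq 0$ gives the Hessian bound $\nabla^2 f \leq -\tfrac{1}{2} g$, so $f$ is strictly concave. The identity above together with $R \geq 0$ (an immediate consequence of $Ric \geq 0$) forces $f \leq 0$ everywhere, so $f$ is bounded above and must attain a maximum at some interior point $O$ by strict concavity on the complete manifold; at this point $\nabla f(O) = 0$ and $-f(O) = R(O) \geq 0$.

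For the lower bound on $-f$ I would fix $x \in M^n$, take a unit-speed minimizing geodesic $\gamma : [0, r(x)] \to M^n$ from $O$ to $x$, and consider $\phi(s) := f(\gamma(s))$. The Hessian bound gives $\phi''(s) \leq -\tfrac{1}{2}$, and $\phi'(0) = \langle \nabla f(O), \gamma'(0) \rangle = 0$; two integrations yield
\begin{equation*}
-f(x) \geq \tfrac{1}{4} r(x)^2 + (-f(O)),
\end{equation*}
which fits the required form $\tfrac{1}{4}(r(x) - c_1)^2 - c_2$ for suitable $c_1, c_2 > 0$. For the upper bound I would use that $\psi := \sqrt{-f}$ is well defined (since $-f \geq 0$) and compute, via the normalized identity,
\begin{equation*}
|\nabla \psi|^2 \;=\; \frac{|\nabla f|^2}{4(-f)} \;=\; \frac{|\nabla f|^2}{4\bigl(R + |\nabla f|^2\bigr)} \;\leq\; \frac{1}{4},
\end{equation*}
so $\psi$ is $\tfrac{1}{2}$-Lipschitz. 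Hence $\psi(x) \leq \psi(O) + \tfrac{1}{2} r(x) = \sqrt{-f(O)} + \tfrac{1}{2} r(x)$, and squaring gives $-f(x) \leq \tfrac{1}{4}(r(x) + 2\sqrt{-f(O)})^2$.

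Finally, the lower bound shows $-f \to \infty$ as $r(x) \to \infty$, so $f$ is a strictly concave exhaustion function with a unique, non-degenerate critical point $O$. The flow of $-\nabla f / |\nabla f|^2$ on $M^n \setminus \{O\}$ identifies this set with $(0, \infty) \times L$ for a small level hypersurface $L \cong S^{n-1}$ around $O$; gluing back $O$ yields $M^n \cong \mathbb{R}^n$. The main subtlety of the argument is matching the two growth bounds to the same normalization of $f$: once the additive constant in the Hamilton identity is fixed at zero, the Hessian ODE along geodesics and the Lipschitz estimate for $\sqrt{-f}$ interlock cleanly, and the explicit constant $2\sqrt{-f(O)}$ on the right-hand side emerges directly from the Lipschitz step without further adjustment.
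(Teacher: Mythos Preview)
The paper does not supply its own proof of this lemma: it is quoted verbatim as a preliminary result from Cao--Catino--Chen--Mantegazza--Mazzieri \cite{cao Bach s}, so there is no in-paper argument to compare against. Your proposal is essentially the standard proof one finds in that reference, and it is correct in outline and in its key computations.

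A few small points are worth tightening. First, your sentence ``$f$ is bounded above and must attain a maximum at some interior point $O$ by strict concavity'' is slightly glib: boundedness above plus strict concavity alone does not force a maximum (think of $-e^x$ on $\mathbb{R}$). What actually does the work is your uniform Hessian bound $\nabla^2 f \leq -\tfrac{1}{2}g$, which forces $f(\gamma(s)) \leq f(p) + |\nabla f(p)|\,s - \tfrac{1}{4}s^2 \to -\infty$ along every unit-speed geodesic $\gamma$ from any point $p$; hence each superlevel set $\{f \geq c\}$ is bounded and, by completeness, compact, so the maximum is attained. Second, in the Lipschitz step for $\psi = \sqrt{-f}$ you should note that $-f = R + |\nabla f|^2$ vanishes only where both $R$ and $\nabla f$ vanish, so the quotient $|\nabla f|^2/(-f)$ extends continuously (indeed, the zero set of $-f$ is at most the single point $O$, by your lower bound), and the Lipschitz inequality $|\nabla\psi| \leq \tfrac{1}{2}$ holds globally. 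Third, your lower bound $-f(x) \geq \tfrac{1}{4}r(x)^2 - f(O)$ is actually \emph{sharper} than the stated form $\tfrac{1}{4}(r(x)-c_1)^2 - c_2$; it is worth remarking explicitly that the latter follows from the former for any choice of $c_1>0$ with $c_2 \geq \tfrac{1}{4}c_1^2 - f(O)$. With these clarifications your argument is complete.
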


\begin{lemma}[P. Petersen and W. Wylie \cite{rigid}]\label{lem:2.6}
The following conditions for a shrinking (or expanding) gradient
soliton $Ric+Hess f=\lambda g$ all imply that the metric is radially flat and has constant
scalar curvature.

(1) The scalar curvature is constant and $sec(E,\nabla f)\geq0$ (or $sec(E,\nabla f)\leq0$).

(2) The scalar curvature is constant and $0\leq Ric\leq g$ (or $\lambda g\leq Ric\leq0$).

(3) The curvature tensor is harmonic.

(4) $Ric\geq0$ (or $Ric\leq0$) and $sec(E,\nabla f)=0$.
\end{lemma}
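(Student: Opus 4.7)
The plan is to extract two master identities for gradient Ricci solitons and then handle each condition separately. From the soliton equation $\nabla_i\nabla_j f = \lambda g_{ij} - R_{ij}$, contracting the second Bianchi identity yields
$$\nabla_j R = 2R_{jk}\nabla^k f,$$
while differentiating the soliton equation in $\nabla_k$, subtracting the same expression with $i\leftrightarrow k$, and applying the Ricci commutation formula to the third derivatives of $f$ yields
$$\nabla_i R_{jk} - \nabla_j R_{ik} = R_{ijkl}\nabla^l f,$$
which I will refer to as $(\star)$. The formula $(\star)$ directly links the failure of $Ric$ to be Codazzi with the failure of radial flatness $R_{ijkl}\nabla^l f = 0$, and it will do most of the work. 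At any regular point of $f$ I fix the local orthonormal frame $\{e_i\}$ with $e_1=\nabla f/|\nabla f|$, as already set up in Section 2.

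Cases (3) and (4) are then direct applications. For (3), the harmonic-curvature condition $\nabla^l R_{ijkl}=0$ is, by the contracted second Bianchi identity, equivalent to $Ric$ being a Codazzi tensor and already implies $\nabla R=0$; substituting the Codazzi property into $(\star)$ gives $R_{ijkl}\nabla^l f=0$, which yields radial flatness. For (4), the hypothesis $\sec(E,\nabla f)=0$ is radial flatness itself, so only constant scalar curvature remains to be proved; expanding $R_{11}=\sum_k R_{1k1k}=0$ and using that $Ric\geq 0$ is positive semidefinite with a vanishing diagonal entry forces $R_{1j}=0$ for every $j$, and the first identity then gives $\nabla_i R=2R_{i1}|\nabla f|=0$.

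Cases (1) and (2) start from the opposite end: constant $R$ and the first identity give $R_{ij}\nabla^j f=0$, whence $R_{11}=0$. For (1), write $R_{11}=\sum_{k\neq 1}\sec(e_1,e_k)$; the one-sided bound on the radial sectional curvatures forces each summand to vanish, giving radial flatness. Case (2) is the step I expect to be the main obstacle, because the PSD argument only yields $R_{1j}=0$ for all $j$, not the vanishing of the individual radial sectional curvatures $R_{1k1k}$. To bridge this gap one exploits the upper bound $Ric\leq\lambda g$, which translates through the soliton equation to $\nabla^2 f\geq 0$ in the shrinking case; combined with the soliton evolution of $Ric$ along $\nabla f$ (a linearized Ricci-flow heat equation), this convexity should force the kernel of $Ric$ to be parallel along $\nabla f$ and propagate the vanishing of the first row of $Ric$ to the vanishing of the radial sectional curvatures. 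Establishing this propagation cleanly is the delicate point; once it is in hand, radial flatness follows and constancy of $R$ is part of the hypothesis, with the expanding case handled by reversing signs.
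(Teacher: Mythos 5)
You should first note that the paper does not prove this lemma at all: it is quoted verbatim from Petersen--Wylie \cite{rigid}, so your attempt has to be measured against the standard argument there rather than against anything in the text. Your two master identities are correct (the second one is exactly (3.8) of the paper), and your treatment of cases (1), (3) and (4) is sound and essentially the standard route: constant $R$ gives $Ric(\nabla f)=0$, hence the sum of the radial sectional curvatures vanishes and a one-sided sign kills each term in (1); harmonic curvature gives Codazzi Ricci and constant $R$, and $(\star)$ then yields $R_{ijkl}\nabla_lf=0$ in (3); and in (4) the vanishing diagonal entry of the positive semidefinite Ricci forces $Ric(\nabla f)=0$, hence $\nabla R=2Ric(\nabla f,\cdot)=0$ (the frame only exists at regular points, but the extension over critical points by continuity/density is routine).

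The genuine gap is case (2), and your own wording concedes it (``should force'', ``the delicate point''). Two things are missing. First, you never invoke the identity $\Delta_fR=2\lambda R-2|Ric|^2$, which is the only place the two-sided bound can enter: with $R$ constant it gives $|Ric|^2=\lambda R$, i.e.\ $\sum_i\alpha_i(\alpha_i-\lambda)=0$, and the bound $0\le Ric\le\lambda g$ (this is the hypothesis in Petersen--Wylie; as printed, $Ric\le g$ would not even justify your claim $\nabla^2f\ge0$) then pins every eigenvalue of $Ric$ to $\{0,\lambda\}$, so $Ric^2=\lambda Ric$. Second, even granting this, contracting $(\star)$ with $\nabla f$ shows that for constant $R$ and $Ric^2=\lambda Ric$ the radial curvature form $R(X,\nabla f,\nabla f,Y)$ coincides, up to sign, with $(\nabla_{\nabla f}Ric)(X,Y)$; differentiating $Ric^2=\lambda Ric$ along $\nabla f$ annihilates the two diagonal blocks of $\nabla_{\nabla f}Ric$ relative to the splitting $\ker Ric\oplus E_\lambda$, but the mixed block equals $-\lambda\langle\nabla_{\nabla f}X,Y\rangle$ for $X\in\ker Ric$, $Y\in E_\lambda$, i.e.\ it vanishes exactly when the eigendistributions are preserved along $\nabla f$ --- which is precisely the ``propagation'' you assert but do not prove. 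The convexity $\nabla^2f\ge0$ by itself gives no control whatsoever on these mixed radial curvatures, and no ``linearized Ricci-flow heat equation'' computation is actually carried out. So as written, (2) is asserted rather than proved; the conclusion does not follow from the two identities you set up, and an additional honest argument that $\nabla_{\nabla f}Ric=0$ (as in Petersen--Wylie) is required.
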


\begin{lemma}[P. Petersen and W. Wylie \cite{rigid}]\label{lem:2.7}
 A gradient soliton is rigid if and only if it has constant scalar curvature and is radially flat, that is, $sec(E,\nabla f)=0$.
\end{lemma}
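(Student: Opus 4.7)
The plan is to prove both directions of the stated equivalence.

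For the forward direction, suppose $(M^n,g,f)$ is rigid; by definition $M$ is isometric to a quotient of $N^{n-k}\times\mathbb{R}^k$ with $(N,g_N)$ Einstein, $Ric_N=\lambda g_N$, and $f(x)=\frac{\lambda}{2}|x|^2$ on the Euclidean factor. Then $R$ equals the (constant) scalar curvature of $N$, and $\nabla f$ is tangent to the flat $\mathbb{R}^k$ factor, so every 2-plane containing $\nabla f$ lies in a flat totally geodesic submanifold and $sec(E,\nabla f)\equiv0$.

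The substantive content is the converse. Assuming $R$ is constant and $sec(E,\nabla f)=0$, I would first collect the pointwise consequences. The standard soliton identity $\nabla_iR=2R_{ij}\nabla_jf$ (from tracing the soliton equation and using the twice-contracted Bianchi identity) gives $Ric(\nabla f)=0$; the soliton equation $\nabla_X\nabla f=\lambda X-Ric(X)$ then yields $\nabla_{\nabla f}\nabla f=\lambda\nabla f$ and $|\nabla f|^2=2\lambda f+c$, so the integral curves of $\nabla f$ are reparametrized geodesics. Radial flatness polarizes to $R(\cdot,\nabla f)\nabla f=0$.

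Next I would combine these facts to extract a product structure. The Bochner-type identity $\Delta_fR=2\lambda R-2|Ric|^2$ combined with $R$ constant yields the algebraic relation $|Ric|^2=\lambda R$, and contracting the soliton Bianchi identity $\nabla_jR_{ik}-\nabla_iR_{jk}=R_{ijkl}\nabla^lf$ with $\nabla^jf$ and using radial flatness produces the matrix ODE $\nabla_{\nabla f}Ric=Ric^2-\lambda Ric$ along integral curves of $\nabla f$. At the same time, radial flatness turns the Jacobi equation into $\nabla_T\nabla_TJ=0$ along these geodesics (with $T=\nabla f/|\nabla f|$), so Jacobi fields are affine in arc length; together with $\nabla^2f=\lambda g-Ric$ this forces the gradient flow of $f$ to act by isometries between level sets of $f$. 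Applying the de Rham decomposition theorem to the universal cover then produces a splitting $\widetilde M\cong N\times\mathbb{R}^k$ in which $N$ is Einstein with constant $\lambda$, and the soliton equation forces $f=\frac{\lambda}{2}|x|^2$ on the Euclidean factor, giving rigidity.

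The main obstacle is promoting the above pointwise and one-dimensional data to a global isometric splitting: one must verify that the eigendistributions of $Ric$ (equivalently of $\nabla^2f$) have locally constant rank and are parallel on the open set where $\nabla f\neq0$, and then extend the resulting product structure across the critical set of $f$ so that the final de Rham splitting is defined on all of $\widetilde M$.
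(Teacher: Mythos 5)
You should first note that the paper itself gives no proof of this lemma: it is imported verbatim from Petersen--Wylie \cite{rigid} as background, so there is no internal argument to compare against and your sketch has to stand on its own as a reconstruction of that result. The easy direction is fine, and the identities you assemble for the converse are correct and are indeed the standard ingredients: $Ric(\nabla f)=0$ from constancy of $R$, $|\nabla f|^2=2\lambda f+c$, the polarization $R(\cdot,\nabla f)\nabla f=0$, $|Ric|^2=\lambda R$ from the Bochner-type identity, and the radial ODE $\nabla_{\nabla f}Ric=Ric^{2}-\lambda\,Ric$.

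The genuine gap is exactly the step you defer to your last paragraph, and the bridge you do offer for it fails: the gradient flow of $f$ does \emph{not} act by isometries between level sets. One has $\mathcal{L}_{\nabla f}\,g=2\nabla^{2}f=2(\lambda g-Ric)$, which vanishes only in directions where $Ric=\lambda g$; already on the Gaussian soliton $\mathbb{R}^{n}$ with $f=\frac{\lambda}{2}|x|^{2}$ --- the model you are trying to recover --- the level sets are round spheres of different radii and the flow dilates them, so the claimed isometry is false, and affinity of Jacobi fields along the radial geodesics does not by itself single out any parallel distribution. What the converse actually requires is to show that $Ric$ (equivalently $\nabla^{2}f$) has spectrum $\{0,\lambda\}$ with smooth, locally constant rank, \emph{parallel} eigendistributions --- for instance by combining $\nabla_{\nabla f}Ric=Ric^{2}-\lambda\,Ric$ with the constancy of $R$ and of $|Ric|^{2}=\lambda R$ to pin down the eigenvalues, then proving parallelism in all directions, and treating separately the steady case $\lambda=0$, the Einstein case $\nabla f\equiv0$, and the extension across the critical set of $f$ --- and only then can the de Rham decomposition of the universal cover be invoked. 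As written, your proposal names this as ``the main obstacle'' but asserts its resolution (via the false isometry claim) rather than providing it, so the substantive direction of the lemma remains unproved.
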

\section{Basic Formulas on Curvatures} 
\label{sec: 3}

In this section, we give some formulas of the Riemannian curvature that are needed in the proof of the main results.

\begin{proposition}
On a four-dimensional Riemannian manifold, we have
\begin{equation}\label{eq:3.1}
R_{i'j'ik}=R_{j'ii'k}=R_{ii'j'k}=0,
\end{equation}
and
\begin{equation}\label{eq:3.2}
\nabla_iR_{i'j'kl}=\nabla_{i'}R_{j'ikl}=\nabla_{j'}R_{ii'kl}=0.
\end{equation}
\end{proposition}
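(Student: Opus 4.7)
The plan is to combine the Bianchi identities with the 4D duality encoded in the primed-pair convention.

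For \eqref{eq:3.1}, I would apply the first Bianchi identity $R_{abcd}+R_{acdb}+R_{adbc}=0$ together with the pair symmetry $R_{abcd}=R_{cdab}$ to derive the equivalent cyclic identity in the first three slots,
\[R_{abcd}+R_{bcad}+R_{cabd}=0.\]
Specializing $(a,b,c,d)=(i',j',i,k)$ gives
\[R_{i'j'ik}+R_{j'ii'k}+R_{ii'j'k}=0,\]
exhibiting the three displayed expressions as a Bianchi triple. Since $(iji'j')$ is an even permutation of $(1234)$, the three cyclic rearrangements $(i',j',i)$, $(j',i,i')$, $(i,i',j')$ all list the same three-element subset $\{1,2,3,4\}\setminus\{j\}$; an index chase using antisymmetry, pair symmetry, and the priming involution $(ij)\leftrightarrow(i'j')$ then pairwise identifies the three terms, so three pairwise-equal quantities summing to zero must each vanish.

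For \eqref{eq:3.2}, the identical strategy applies with the second Bianchi identity $\nabla_{[a}R_{bc]de}=0$ in its cyclic form
\[\nabla_aR_{bcde}+\nabla_bR_{cade}+\nabla_cR_{abde}=0.\]
Specializing $(a,b,c)=(i,i',j')$ and $(d,e)=(k,l)$ yields the covariant-derivative Bianchi triple
\[\nabla_iR_{i'j'kl}+\nabla_{i'}R_{j'ikl}+\nabla_{j'}R_{ii'kl}=0,\]
and the 4D duality argument again promotes this sum-vanishing to individual vanishing of each term.

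The main obstacle will be the 4D-specific step that upgrades the Bianchi sum to individual vanishing of each cyclic term. In higher dimensions the complement $\{1,\dots,n\}\setminus\{i,j,i',j'\}$ is nonempty and the priming involution on 2-index pairs does not exist, so the pairwise identification of the cyclic rotations is genuinely a feature of dimension four. The technical heart of the proof is therefore the careful bookkeeping of signs under successive applications of pair symmetry, antisymmetry, and priming.
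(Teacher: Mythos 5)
There is a genuine gap, and it begins with how you read the statement. In this paper \eqref{eq:3.1} is a \emph{contraction} identity: the repeated index $i$ is summed (with $j,k$ free), as the paper's own computation for $j=1$, namely $R_{432k}+R_{243k}+R_{324k}=0$, and all later uses (e.g.\ $\nabla_lR_{i'j'il}=0$ in the proof of Proposition \ref{prop:3.3}, or $R_{j'i'jl}\nabla_lR=0$ in Lemma \ref{lem:3.10}) make clear. Read componentwise, as your argument does, the displayed quantities are not zero at all, and the key step of your proof fails: for $(i,j)=(1,2)$, $k=2$, your Bianchi triple consists of $R_{3412}=R_{1234}$, $R_{4132}=R_{1423}$ and $R_{1342}$, three components whose only universal relation is $R_{1234}+R_{1342}+R_{1423}=0$; no combination of antisymmetry, pair symmetry and priming identifies them pairwise, and they are individually nonzero on concrete manifolds (on $\mathbb{C}P^2$ in a $J$-adapted orthonormal frame one has $R_{1234}\neq0$). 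So the step ``three pairwise-equal quantities summing to zero must each vanish'' rests on a false pairwise equality, and under the componentwise reading the conclusion you are aiming at is itself false.

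The mechanism the paper uses is different: for each fixed $j$, the three terms of the \emph{single} sum $\sum_iR_{i'j'ik}$ have first three indices that are cyclic permutations of $\{1,2,3,4\}\setminus\{j\}$, so this one contraction is already a first-Bianchi triple and vanishes; then \eqref{eq:3.2} follows from the second Bianchi identity in the form $\nabla_iR_{i'j'kl}=\nabla_kR_{i'j'il}-\nabla_lR_{i'j'ik}$ together with \eqref{eq:3.1}. Your grouping can be repaired, but only under the summed reading and with a different justification: summing your identity $R_{i'j'ik}+R_{j'ii'k}+R_{ii'j'k}=0$ over $i$ and observing that the three resulting contractions are relabelings of one another (for fixed $j$, substituting $i\mapsto i'$ permutes $\{1,2,3,4\}\setminus\{j\}$ and carries one sum onto the next) gives $3\sum_iR_{i'j'ik}=0$, hence the vanishing; the identification is a dummy-index relabeling under the sum, not an equality of the individual cyclic terms, and this is exactly the step your proposal does not supply. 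The same correction is needed in your treatment of \eqref{eq:3.2}, where your cyclic form of the second Bianchi identity is fine but the ``promotion to individual vanishing'' again only makes sense for the summed quantities.
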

\begin{proof}
By direct computations, we have
\[R_{i'1'ik}=R_{432k}+R_{243k}+R_{324k}=0,\]
\[R_{i'2'ik}=R_{341k}+R_{413k}+R_{134k}=0,\]
and
\[R_{i'3'ik}=R_{421k}+R_{142k}+R_{214k}=0,\]
\[R_{i'4'ik}=R_{231k}+R_{312k}+R_{123k}=0.\]
Therefore, we have $R_{i'j'ik}=0$. The same arguments implies that $R_{j'ii'k}=R_{ii'j'k}=0$, i.e. \eqref{eq:3.1} holds.

From the second Bianchi identity and \eqref{eq:3.1}, we have
\[\nabla_iR_{i'j'kl}=\nabla_{k}R_{i'j'il}-\nabla_lR_{i'j'ik}=0.\]

The same arguments implies that $\nabla_{i'}R_{j'ikl}=\nabla_{j'}R_{ii'kl}=0$, i.e. \eqref{eq:3.2} holds.
\end{proof}

As a direct corollary of \eqref{eq:3.2}, we have
\begin{corollary}\label{cor:3.2}
On a four-dimensional Riemannian manifold, we have
\begin{equation}\label{eq:c1}
(divRm^\pm)_{ijk}:=\nabla_lR^\pm_{ijkl}=\frac{1}{4}(\nabla_lR_{ijkl}\pm \nabla_lR_{i'j'kl}),
\end{equation}
\begin{equation}\label{eq:c2}
(div^2Rm^\pm)_{ik}:=\nabla_j\nabla_lR^\pm_{ijkl}=\frac{1}{4}(\nabla_j\nabla_lR_{ijkl}\pm \nabla_j\nabla_lR_{i'j'kl}),
\end{equation}
and
\begin{equation}\label{eq:c3}
(div^3Rm^\pm)_{i}:=\nabla_k\nabla_j\nabla_lR^\pm_{ijkl}=\frac{1}{4}(\nabla_k\nabla_j\nabla_lR_{ijkl}\pm \nabla_k\nabla_j\nabla_lR_{i'j'kl}),
\end{equation}
\begin{equation}\label{eq:c4}
div^4Rm^\pm:=\nabla_i\nabla_k\nabla_j\nabla_lR^\pm_{ijkl}=\frac{1}{4}(\nabla_i\nabla_k\nabla_j\nabla_lR_{ijkl}\pm \nabla_i\nabla_k\nabla_j\nabla_lR_{i'j'kl}).
\end{equation}
\end{corollary}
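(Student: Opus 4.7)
The strategy is to derive each divergence identity by expanding $R^\pm_{ijkl}$ via its defining four-term formula and showing that, after contracting with the final index $l$, the two terms involving the dual of the pair $(k,l)$ drop out. Concretely, starting from
\[
R^\pm_{ijkl}=\tfrac{1}{4}\bigl(R_{ijkl}\pm R_{ijk'l'}\pm R_{i'j'kl}+R_{i'j'k'l'}\bigr),
\]
the identity \eqref{eq:c1} will follow once I verify the two auxiliary pointwise identities
\[
\sum_l\nabla_l R_{ijk'l'}=0\qquad\text{and}\qquad\sum_l\nabla_l R_{i'j'k'l'}=0
\]
for every fixed choice of the remaining indices. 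I expect proving these to be the main (and essentially only) technical step; everything else is formal manipulation.

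Both vanishing statements reduce to \eqref{eq:3.2} using only the pair-exchange and antisymmetries of the Riemann tensor. For the first, $R_{ijk'l'}=R_{k'l'ij}=-R_{l'k'ij}$, so $\sum_l\nabla_l R_{ijk'l'}=-\sum_l\nabla_l R_{l'k'ij}$; this has the exact shape of $\sum_p\nabla_p R_{p'q'rs}=0$ from \eqref{eq:3.2}, with the identifications $p=l$ (summed index), $q=k$ (its fixed companion that determines the primes), and $(r,s)=(i,j)$ (two fixed free indices). For the second, the pair $(i',j')$ is fixed as soon as $(i,j)$ is, so I can treat $i',j'$ as abstract fixed indices and apply the same argument to $\sum_l\nabla_l R_{l'k'i'j'}$. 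Substituting both zeros into the four-term expansion of $\nabla_l R^\pm_{ijkl}$ collapses it to the two-term expression in \eqref{eq:c1}.

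The higher-order identities \eqref{eq:c2}--\eqref{eq:c4} then follow by iteration. Applying the remaining divergences $\nabla_j$, $\nabla_k\nabla_j$, or $\nabla_i\nabla_k\nabla_j$ to the four-term expansion of $R^\pm_{ijkl}$ produces analogous four-term expansions of each higher divergence. At each stage the innermost $l$-contraction of the two mixed terms is identically zero as a scalar function of the outer indices, and since finite index sums commute with covariant differentiation, applying further $\nabla$'s to zero still gives zero. The one bookkeeping subtlety is that the primes $(i',j')$ depend on the summed index $j$ while $(k',l')$ depend on the summed index $l$; this causes no obstacle because the inner $l$-sum has already been shown to vanish uniformly in the remaining free indices (including $j$) before any outer derivative is applied.
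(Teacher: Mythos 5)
Your argument is correct and is essentially the paper's own: the corollary is stated there as a direct consequence of \eqref{eq:3.2}, and your reduction of $\nabla_lR_{ijk'l'}$ and $\nabla_lR_{i'j'k'l'}$ to that identity via the pair-exchange symmetry and antisymmetry (noting that the dual of $(l,k)$ is $(l',k')$) is exactly the intended justification. Your iteration to the higher divergences, using that the inner $l$-contraction vanishes identically in the remaining indices before the outer derivatives are applied, fills in the details at the same level of rigor as the paper and raises no issues.
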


\begin{proposition}\label{prop:3.3}
On a four-dimensional Riemannian manifold, $divRm^\pm=0$ implies $divW^\pm=0$.
\end{proposition}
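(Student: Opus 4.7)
My plan is to relate $(divW^{\pm})_{ijk}$ to $(divRm^{\pm})_{ijk}$ up to a remainder built from $\nabla R$, and then to show that the hypothesis $divRm^{\pm}=0$ itself forces the scalar curvature to be locally constant, making that remainder vanish.

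First, I would derive the classical pointwise identity in dimension four,
\begin{equation*}
(divW)_{ijk} \;=\; \tfrac{1}{2}(divRm)_{ijk} + \tfrac{1}{12}\bigl(g_{jk}\nabla_{i}R - g_{ik}\nabla_{j}R\bigr),
\end{equation*}
by combining $\nabla_{l}W_{ijkl}=-\tfrac{1}{2}C_{ijk}$ (the $n=4$ case of the Cotton formula recalled in Section~\ref{sec: 2}), the definition of $C_{ijk}$, and the twice-contracted second Bianchi identity $(divRm)_{ijk}=\nabla_{j}R_{ik}-\nabla_{i}R_{jk}$.

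Next I would apply the self-dual/anti-self-dual operation on the pair $(ij)$ to both sides. From Corollary~\ref{cor:3.2}, $(divRm^{\pm})_{ijk}=\tfrac{1}{4}\bigl((divRm)_{ijk}\pm(divRm)_{i'j'k}\bigr)$, and from Proposition~\ref{prop:2.0}, $(divW^{\pm})_{ijk}=\tfrac{1}{2}\bigl((divW)_{ijk}\pm(divW)_{i'j'k}\bigr)$. Substituting gives
\begin{equation*}
(divW^{\pm})_{ijk} \;=\; (divRm^{\pm})_{ijk} + \tfrac{1}{24}\Bigl[(g_{jk}\nabla_{i}R - g_{ik}\nabla_{j}R) \pm (g_{j'k}\nabla_{i'}R - g_{i'k}\nabla_{j'}R)\Bigr],
\end{equation*}
so it suffices to prove $\nabla R\equiv 0$ under the assumption $divRm^{\pm}=0$.

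For this I would trace the hypothesis over $(jk)$. The twice-contracted Bianchi identity immediately gives $g^{jk}(divRm)_{ijk}=-\tfrac{1}{2}\nabla_{i}R$; for the dual cross term $g^{jk}(divRm)_{i'j'k}$, the three summands (one per admissible $j\neq i$, each fixing its own dual pair $(i',j')$) are off-diagonal Ricci derivatives that cancel pairwise by the symmetry $R_{ab}=R_{ba}$, so the cross term vanishes identically. Hence $g^{jk}(divRm^{\pm})_{ijk}=-\tfrac{1}{8}\nabla_{i}R$ must vanish, giving $\nabla R\equiv 0$. With $R$ locally constant, the bracket in the previous display is identically zero and $(divW^{\pm})_{ijk}=(divRm^{\pm})_{ijk}=0$. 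The only genuine obstacle is this trace step: one must carefully track how $(i',j')$ depends on $j$ as $j$ runs over $\{1,2,3,4\}\setminus\{i\}$, but after that bookkeeping the cancellation reduces to symmetry of Ricci (equivalently, to the first Bianchi identity used in the proof of~\eqref{eq:3.1}).
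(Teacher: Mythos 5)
Your proposal is correct and follows essentially the same route as the paper: you derive the paper's identity \eqref{eq:3.4} expressing $(divW^\pm)_{ijk}$ as $(divRm^\pm)_{ijk}$ plus $\nabla R$-terms, and you obtain $\nabla R\equiv 0$ by tracing the hypothesis, exactly as in \eqref{eq:3.3}. The only cosmetic differences are the order of the two steps and that you trace over $(jk)$ and cancel the dual cross term via the symmetry of $Ric$, whereas the paper traces over $(ik)$ and invokes \eqref{eq:3.1}; both cancellations come from the first Bianchi identity.
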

\begin{proof}
Tracing $divRm^\pm$, we have
\begin{equation}\label{eq:3.3}
g^{ik}\nabla_lR^\pm_{ijkl}=\frac{1}{4}g^{ik}(\nabla_lR_{ijkl}\pm\nabla_lR_{i'j'kl})=\frac{1}{4}(\nabla_lR_{jl}\pm \nabla_lR_{i'j'il})=\frac{1}{8}\nabla_jR,
\end{equation}
where we used \eqref{eq:c1} in the first equality. Moreover, we used $divRic=\frac{1}{2}\nabla R$ and \eqref{eq:3.1} in the last.

Since $divRm^\pm=0$, \eqref{eq:3.3} implies that $\nabla R=0$.

By direct computation, we have
\begin{eqnarray}\label{eq:3.4}
\nabla_lW^\pm_{ijkl}&=&\frac{1}{2}(\nabla_lW_{ijkl}\pm\nabla_lW_{i'j'kl})\notag\\
&=&-\frac{1}{4}(C_{ijk}\pm C_{i'j'k})\notag\\
&=&-\frac{1}{4}[\nabla_{i}R_{jk}-\nabla_{j}R_{ik}-\frac{1}{6}(g_{jk}\nabla_{i}R-g_{ik}\nabla_{j}R)]\notag\\
&&\mp\frac{1}{4}[(\nabla_{i'}R_{j'k}-\nabla_{j'}R_{i'k}-\frac{1}{6}(g_{j'k}\nabla_{i'}R-g_{i'k}\nabla_{j'}R)]\notag\\
&=&\frac{1}{4}[\nabla_lR_{ijkl}+\frac{1}{6}(g_{jk}\nabla_{i}R-g_{ik}\nabla_{j}R)]\notag\\
&&\pm\frac{1}{4}[\nabla_lR_{i'j'kl}+\frac{1}{6}(g_{j'k}\nabla_{i'}R-g_{i'k}\nabla_{j'}R)]\notag\\
&=&\nabla_lR^\pm_{ijkl}+\frac{1}{24}(g_{jk}\nabla_{i}R-g_{ik}\nabla_{j}R)\pm\frac{1}{24}(g_{j'k}\nabla_{i'}R-g_{i'k}\nabla_{j'}R),
\end{eqnarray}
where we used the second Bianchi identity in the fourth equality and \eqref{eq:c1} in the last.

Note that $divRm^\pm=0$ and $\nabla R=0$, it follows from \eqref{eq:3.4} that $\nabla_lW^\pm_{ijkl}=0$.
\end{proof}

\begin{remark}\label{rem:3.4}
From Lemma \ref{lem:2.4}, we know that a four-dimensional gradient shrinking Ricci soliton with $divRm^\pm=0$ is either Einstein, or a finite quotient $\mathbb{S}^3\times\mathbb{R}$, $\mathbb{S}^2\times\mathbb{R}^2$ or $\mathbb{R}^4$.
\end{remark}

\begin{proposition}\label{prop:3.5}
On a four-dimensional Riemannian manifold, $|divRm^\pm|^2\geq\frac{1}{48}|\nabla R|^2$.
\end{proposition}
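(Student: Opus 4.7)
The plan is to exploit three structural properties of the tensor $T_{ijk}:=(divRm^\pm)_{ijk}=\nabla_l R^\pm_{ijkl}$: (i) the trace identity from \eqref{eq:3.3}, namely $g^{ik}T_{ijk}=\tfrac{1}{8}\nabla_j R$; (ii) antisymmetry in the first two indices, $T_{ijk}=-T_{jik}$; and (iii) the self-duality $T_{i'j'k}=\pm T_{ijk}$, which follows from $R^\pm_{i'j'kl}=\pm R^\pm_{ijkl}$ by taking $\nabla_l$.

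First I would use (ii) to note that $T_{jjk}=0$, so for each fixed $j$ the identity in (i) is effectively a sum of only $n-1=3$ non-trivial terms: $\sum_{i\neq j}T_{iji}=\tfrac{1}{8}\nabla_j R$. Applying Cauchy--Schwarz to those three terms and summing in $j$ yields
\[
\tfrac{1}{64}|\nabla R|^2\leq 3\,S,\qquad S:=\sum_{j}\sum_{i\neq j}T_{iji}^2.
\]

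Next I would invoke (iii). The self-duality forces the squared components of $T$ to pair up across the three dual pairings $(12)\!\leftrightarrow\!(34)$, $(13)\!\leftrightarrow\!(24)$, $(14)\!\leftrightarrow\!(23)$, so that $T_{12k}^2=T_{34k}^2$, $T_{13k}^2=T_{24k}^2$ and $T_{14k}^2=T_{23k}^2$ for every $k$. Together with (ii), this gives
\[
|T|^2=2\sum_{i<j,\,k}T_{ijk}^2=4\sum_{k}\bigl(T_{12k}^2+T_{13k}^2+T_{14k}^2\bigr).
\]

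The crucial step is then the identification
\[
S=\sum_{k}\bigl(T_{12k}^2+T_{13k}^2+T_{14k}^2\bigr),
\]
which, combined with the display above, yields $|T|^2=4S$ and hence
\[
|T|^2=4S\geq\frac{4}{3\cdot 64}\,|\nabla R|^2=\frac{1}{48}\,|\nabla R|^2,
\]
as required. I would verify this identity by direct tabulation: expanding $S$ produces $12$ squared terms $T_{iji}^2$ with $i\neq j$, and applying antisymmetry followed by the relation $T_{i'j'k}=\pm T_{ijk}$ to each of them rewrites it as one of the $12$ components $T_{12k}^2,\,T_{13k}^2,\,T_{14k}^2$ ($k=1,2,3,4$), with each appearing exactly once. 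The main obstacle is precisely this bookkeeping step, because the duality pairings act non-uniformly on the different pairs (e.g.\ $(21)\leftrightarrow(34)$, $(31)\leftrightarrow(24)$, $(41)\leftrightarrow(32)$), so one must enumerate all twelve cases carefully to confirm that no component is double-counted or missed; once this is done, the inequality follows immediately.
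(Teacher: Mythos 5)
Your proof is correct, and the bookkeeping you flagged as the main obstacle does check out: writing $T_{ijk}=(divRm^\pm)_{ijk}$, antisymmetry in the first pair together with $T_{i'j'k}=\pm T_{ijk}$ (which is immediate from \eqref{eq:c1}, since the dual pair of $(i'j')$ is $(ij)$) sends the twelve terms $T_{iji}^2$, $i\neq j$, bijectively onto the twelve components $T_{12k}^2, T_{13k}^2, T_{14k}^2$ (e.g. $T_{323}^2=T_{233}^2=T_{143}^2$, $T_{424}^2=T_{134}^2$, $T_{343}^2=T_{123}^2$), so indeed $|T|^2=4\sum_j\sum_{i\neq j}T_{iji}^2$, and Cauchy--Schwarz applied to $\sum_{i\neq j}T_{iji}=\frac18\nabla_jR$ gives exactly $\frac{1}{48}$. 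This is, however, a genuinely different route from the paper's: there the inequality is read off from \eqref{eq:3.4} and the expansion \eqref{eq:3.5}, i.e.\ from the pointwise identity $0\leq|divW^\pm|^2=|divRm^\pm|^2-\frac{1}{48}|\nabla R|^2$, obtained by expressing $divW^\pm$ as $divRm^\pm$ plus explicit trace terms via the Cotton tensor and the second Bianchi identity. Your argument buys independence from the Weyl and Cotton tensors altogether: it is pure pointwise algebra valid for any $3$-tensor that is antisymmetric and (anti-)self-dual in its first two slots, using only the trace identity \eqref{eq:3.3}, and it yields the same optimal constant. What the paper's computation buys is the precise deficit --- the gap is identified as $|divW^\pm|^2$, which meshes with Proposition \ref{prop:3.3} and the way the estimate is used later --- whereas your version gives only the inequality, which is all that Proposition \ref{prop:3.5} asserts.
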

\begin{proof}
From \eqref{eq:3.3}, we know that $trdivRm^\pm=\frac{1}{8}\nabla R$. Using \eqref{eq:3.4}, we have
\begin{eqnarray}\label{eq:3.5}
0\leq|\nabla_lW^\pm_{ijkl}|^2&=&|\nabla_lR^\pm_{ijkl}+\frac{1}{24}(g_{jk}\nabla_{i}R-g_{ik}\nabla_{j}R)\pm\frac{1}{24}(g_{j'k}\nabla_{i'}R-g_{i'k}\nabla_{j'}R)|^2\notag\\
&=&|\nabla_lR^\pm_{ijkl}|^2+\frac{1}{576}|g_{jk}\nabla_{i}R-g_{ik}\nabla_{j}R|^2+\frac{1}{576}|g_{j'k}\nabla_{i'}R-g_{i'k}\nabla_{j'}R|^2\notag\\
&&+\frac{1}{12}\nabla_lR^\pm_{ijkl}(g_{jk}\nabla_{i}R-g_{ik}\nabla_{j}R)\pm\frac{1}{12}\nabla_lR^\pm_{ijkl}(g_{j'k}\nabla_{i'}R-g_{i'k}\nabla_{j'}R)\notag\\
&&\pm\frac{1}{288}(g_{jk}\nabla_{i}R-g_{ik}\nabla_{j}R)(g_{j'k}\nabla_{i'}R-g_{i'k}\nabla_{j'}R)\notag\\
&=&|div Rm^\pm|^2+\frac{1}{96}|\nabla R|^2+\frac{1}{96}|\nabla R|^2-\frac{1}{48}|\nabla R|^2-\frac{1}{48}|\nabla R|^2\notag\\
&=&|div Rm^\pm|^2-\frac{1}{48}|\nabla R|^2.
\end{eqnarray}
It follows that $|divRm^\pm|^2\geq\frac{1}{48}|\nabla R|^2$.
\end{proof}

\begin{proposition}\label{prop:3.6}
Let $(M^4,f,g)$ be a four-dimensional gradient Ricci soliton. Then we have
\begin{equation}
\nabla_{j}\nabla_{l}R_{i'j'kl}=-R_{i'j'kl}R_{jl}.
\end{equation}
\end{proposition}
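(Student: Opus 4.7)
The plan is to leverage the soliton divergence identity $\nabla^l R_{abcl}=R_{abcl}\nabla^l f$ (valid for arbitrary indices $a,b,c$), specialize to $(a,b,c)=(i',j',k)$, differentiate once more in $\nabla_j$, and then kill the resulting extraneous terms using the first and second Bianchi identities on the three indices complementary to $i$.

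The soliton divergence identity itself follows by differentiating \eqref{eq:1.1} and commuting covariant derivatives on $\nabla f$ to obtain $\nabla_j R_{ik}-\nabla_i R_{jk}=R_{ijkl}\nabla^l f$, and then combining with the contracted second Bianchi identity $\nabla^l R_{ijkl}=\nabla_j R_{ik}-\nabla_i R_{jk}$. Specializing to $(a,b,c)=(i',j',k)$ gives $\nabla_l R_{i'j'kl}=R_{i'j'kl}\nabla^l f$. Applying $\nabla_j$ and using $\nabla_j\nabla^l f=\lambda\delta_j^l-R_j{}^l$ from~\eqref{eq:1.1}, I obtain
$$\nabla_j\nabla_l R_{i'j'kl}=(\nabla_j R_{i'j'kl})\nabla^l f+\lambda R_{i'j'kj}-R_{i'j'kl}R_{jl},$$
with $j$ and $l$ implicitly summed. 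The claim therefore reduces to showing that both $\sum_j R_{i'j'kj}$ and $\sum_j\nabla_j R_{i'j'kl}$ vanish.

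Both vanishing statements come from a single combinatorial rearrangement: for fixed $i$, as $j$ runs over the three indices different from $i$, the ordered triple $(j,i',j')$ cycles through the three cyclic rotations of $\{1,2,3,4\}\setminus\{i\}$. For example, with $i=1$ one gets $(2,3,4),(3,4,2),(4,2,3)$. Using the pair symmetry $R_{i'j'kj}=R_{kji'j'}$, the sum $\sum_j R_{i'j'kj}$ rewrites as the classical first Bianchi cyclic sum on that complementary triple with spectator index $k$, hence vanishes; by the same mechanism, $\sum_j\nabla_j R_{i'j'kl}$ becomes the differential second Bianchi cyclic sum on that triple with spectator pair $(k,l)$, and therefore also vanishes. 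The $\nabla f$-coefficient term collapses because the sum over $j$ is already zero before pairing with $\nabla^l f$, and the $\lambda$-term vanishes directly.

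The only delicate step is the bookkeeping for the dual indices $(i',j')$ as they depend on the summation variable $j$; once the sum over $j\neq i$ is identified with a cyclic sum over the complementary triple, both Bianchi identities apply in their clean classical form and the identity $\nabla_j\nabla_l R_{i'j'kl}=-R_{i'j'kl}R_{jl}$ drops out.
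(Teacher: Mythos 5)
Your proof is correct, and it follows a genuinely different (and somewhat shorter) route than the paper's. The paper proves Proposition 3.6 by first using the second Bianchi identity to write $\nabla_j\nabla_lR_{i'j'kl}=\nabla_j\nabla_{j'}R_{i'k}-\nabla_j\nabla_{i'}R_{j'k}$, then expanding each second derivative of the Ricci tensor via the soliton identity \eqref{eq:3.8}, commuting covariant derivatives (which produces curvature commutator terms such as $R_{jkj'l}R_{li'}$ that cancel pairwise between the two blocks), discarding terms by \eqref{eq:3.1}--\eqref{eq:3.2}, and finishing with the first Bianchi identity. You instead start from the once-contracted identity $\nabla_lR_{ijkl}=R_{ijkl}\nabla_lf$ (this is exactly the paper's \eqref{eq:3.15}, established later in the proof of Proposition \ref{prop:3.8}, and your derivation of it from \eqref{eq:3.8} plus the contracted second Bianchi identity is consistent with the paper's sign conventions), specialize it to the dual indices, apply $\nabla_j$, and then eliminate the two extraneous terms through the cyclic sums $\sum_j R_{i'j'kj}=0$ and $\sum_j\nabla_jR_{i'j'kl}=0$, which, after your correct identification of the triples $(j,i',j')=(2,3,4),(3,4,2),(4,2,3)$ for $i=1$, are precisely the first and second Bianchi identities on the complementary triple --- the same mechanism as \eqref{eq:3.1}--\eqref{eq:3.2}, with the summation taken over $j$ at fixed $i$ rather than over $i$ at fixed $j$. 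The only point worth making explicit is that differentiating $R_{i'j'kl}$ in $j$ while $(i',j')$ varies with $j$ is legitimate because dualization on the first index pair is parallel (equivalently, one differentiates each fixed-index identity and then sums), an identification the paper itself uses implicitly in Corollary \ref{cor:3.2}. What your argument buys is the avoidance of second covariant derivatives of $Ric$ and of the commutator bookkeeping; what the paper's version buys is self-containedness at the point where the proposition appears, since \eqref{eq:3.15} has not yet been introduced there.
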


\begin{proof}
The second Bianchi identity implies that
\begin{equation}\label{eq:3.7}
\nabla_j\nabla_{l}R_{i'j'kl}=\nabla_j\nabla_{j'}R_{i'k}-\nabla_j\nabla_{i'}R_{j'k}.
\end{equation}

From \eqref{eq:1.1}, we have
\begin{eqnarray}\label{eq:3.8}
\nabla_jR_{ik}-\nabla_iR_{jk}&=&\nabla_i\nabla_{j}\nabla_{k}f-\nabla_j\nabla_{i}\nabla_{k}f\notag\\
&=&R_{ijkl}\nabla_{l}f.
\end{eqnarray}

By direct computation, we obtain
\begin{eqnarray}\label{eq:3.9}
\nabla_j\nabla_{j'}R_{i'k}&=&\nabla_j(\nabla_kR_{j'i'}+R_{kj'i'l}\nabla_lf)\notag\\
&=&\nabla_j\nabla_kR_{j'i'}+\nabla_jR_{kj'i'l}\nabla_lf+R_{kj'i'l}\nabla_j\nabla_lf\notag\\
&=&\nabla_k\nabla_jR_{j'i'}+R_{jkj'l}R_{li'}+R_{jki'l}R_{j'l}\notag\\
&&+\nabla_jR_{kj'i'l}\nabla_lf+R_{kj'i'l}(\lambda g_{jl}-R_{jl})\notag\\
&=&\nabla_k\nabla_jR_{j'i'}+R_{jkj'l}R_{li'}+R_{jki'l}R_{j'l}-R_{kj'i'l}R_{jl},
\end{eqnarray}
where we used $\eqref{eq:3.8}$ in the first identity, $\eqref{eq:1.1}$ in the third and Proposition \ref{prop:3.1} in the last.

By the same arguments, we have
\begin{eqnarray}\label{eq:3.10}
\nabla_j\nabla_{i'}R_{j'k}&=&\nabla_k\nabla_jR_{i'j'}+R_{jki'l}R_{lj'}+R_{jkj'l}R_{i'l}-R_{ki'j'l}R_{jl}.
\end{eqnarray}

Applying $\eqref{eq:3.9}$ and $\eqref{eq:3.10}$ to $\eqref{eq:3.7}$, we have
\begin{eqnarray*}
\nabla_j\nabla_{l}R_{i'j'kl}&=&\nabla_j\nabla_{j'}R_{i'k}-\nabla_j\nabla_{i'}R_{j'k}\notag\\
&=&(R_{ki'j'l}-R_{kj'i'l})R_{jl}\notag\\
&=&-R_{i'j'kl}R_{jl},
\end{eqnarray*}
where we used the first Bianchi identity.

This completes the proof of Proposition \ref{prop:3.6}.
\end{proof}

\begin{proposition}\label{prop:3.x}
Let $(M^4,f,g)$ be a four-dimensional gradient Ricci soliton. Then we have
\begin{equation}\label{eq:3.x}
R_{i'j'kl}R_{jl}\nabla_if\nabla_kf=-R_{i'j'kl}\nabla_j\nabla_lf\nabla_i f\nabla_k f.
\end{equation}
\end{proposition}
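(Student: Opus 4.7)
The plan is to use the soliton equation \eqref{eq:1.1} to eliminate the Hessian of $f$ from the right-hand side of \eqref{eq:3.x} and thereby reduce the statement to a purely algebraic identity for the Riemann tensor. Substituting $\nabla_j\nabla_l f = \lambda g_{jl} - R_{jl}$ into the right-hand side of \eqref{eq:3.x}, the $R_{jl}$ contribution reproduces the left-hand side exactly, and what remains is the pointwise identity
\[
R_{i'j'kl}\,g_{jl}\,\nabla_i f\,\nabla_k f \;=\; R_{i'j'kj}\,\nabla_i f\,\nabla_k f \;=\; 0,
\]
which has to hold on all of $M^4$ (trivially at critical points of $f$, where both sides of \eqref{eq:3.x} vanish). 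Notice that after this reduction the soliton constant $\lambda$ plays no role in the remaining claim.

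To verify this algebraic identity I would work at a regular point of $f$ in the distinguished orthonormal frame $\{e_i\}$ introduced in Section~\ref{sec: 2}, in which $e_1=\nabla f/|\nabla f|$ and hence $\nabla_i f\,\nabla_k f = |\nabla f|^2\,\delta_{i1}\delta_{k1}$. Only the $i=k=1$ contribution then survives, and the identity collapses to
\[
\sum_{j\in\{2,3,4\}} R_{1'j'1j} \;=\; 0,
\]
where for each $j$ the pair $(1',j')$ is the unique one making $(1,j,1',j')$ an even permutation of $(1,2,3,4)$. Reading off these three contributions from the even-permutation list in Section~\ref{sec: 2} and using the pair-swap symmetry $R_{abcd}=R_{cdab}$ rewrites the sum as $R_{1234}+R_{1342}+R_{1423}$, which vanishes by the first Bianchi identity. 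This is essentially the same mechanism that produces \eqref{eq:3.1}, only with the contraction placed in a different pair of slots; alternatively, one could derive the vanishing directly from \eqref{eq:3.1} by antisymmetrizing $R_{i'j'kj}=-R_{j'i'kj}=R_{j'i'jk}$ and relabeling the summation index.

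The only real obstacle is the index bookkeeping: the primed indices $(i',j')$ depend on the summation index $j$, so $R_{i'j'kj}$ is not a standard Ricci-type contraction and one cannot immediately appeal to a generic symmetry of the Riemann tensor. Passing to the distinguished frame above resolves this by forcing $i=k=1$, after which the first Bianchi identity closes the argument in a single line.
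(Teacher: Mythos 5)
Your proposal is correct and takes essentially the same route as the paper: both substitute the soliton equation \eqref{eq:1.1} to trade $R_{jl}$ for the Hessian of $f$ and reduce \eqref{eq:3.x} to the vanishing of the trace term $\lambda R_{i'j'kj}\nabla_i f\nabla_k f$, which is the Bianchi-type identity \eqref{eq:3.1}. Your frame computation at a regular point (and the remark about handling critical points separately) simply re-derives that vanishing, which the paper cites directly.
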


\begin{proof}
By direct computation, we have
\begin{eqnarray}
R_{i'j'kl}R_{jl}\nabla_if\nabla_kf&=&\lambda R_{i'j'kj}\nabla_i f\nabla_k f-R_{i'j'kl}\nabla_j\nabla_lf\nabla_i f\nabla_k f\notag\\
&=&-R_{i'j'kl}\nabla_j\nabla_lf\nabla_i f\nabla_k f,
\end{eqnarray}
where we \eqref{eq:1.1} in the first equality and \eqref{eq:3.1} in the second.
\end{proof}

\begin{proposition}\label{prop:3.7}
Let $(M^4,f,g)$ be a four-dimensional gradient Ricci soliton. Then at every regular point of $f$, we have
\begin{equation}\label{eq:3.11}
R_{i'j'kl}\nabla_iR_{jl}\nabla_kf=2(R_{121l}R_{341l}+R_{131l}R_{421l}+R_{141l}R_{231l})|\nabla f|^2.
\end{equation}
\end{proposition}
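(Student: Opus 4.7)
The plan is to combine the antisymmetry of $R_{i'j'kl}$ in its first pair of indices with the soliton commutation identity \eqref{eq:3.8}, and then to specialize to a frame aligned with $\nabla f$ and enumerate by duality classes of index pairs.

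First I would observe that since $(i,j,i',j')$ is an even permutation of $(1,2,3,4)$, so is $(j,i,j',i')$; combined with the first-pair antisymmetry of the Riemann tensor this gives $R_{j'i'kl}=-R_{i'j'kl}$, so $R_{i'j'kl}$ is antisymmetric in $(i,j)$. Relabelling indices in \eqref{eq:3.8} yields
\begin{equation*}
\nabla_iR_{jl}-\nabla_jR_{il}=-R_{ijlm}\nabla_mf,
\end{equation*}
so only the antisymmetric part in $(i,j)$ of $\nabla_iR_{jl}$ survives in the contraction, giving
\begin{equation*}
R_{i'j'kl}\nabla_iR_{jl}\nabla_kf=-\tfrac{1}{2}\,R_{i'j'kl}R_{ijlm}\nabla_kf\,\nabla_mf.
\end{equation*}

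Next, at a regular point I would use the orthonormal frame with $e_1=\nabla f/|\nabla f|$, so that $\nabla_kf=|\nabla f|\delta_{k1}$ and $\nabla_mf=|\nabla f|\delta_{m1}$. Together with the second-pair antisymmetry $R_{ijl1}=-R_{ij1l}$, the previous display collapses to
\begin{equation*}
R_{i'j'kl}\nabla_iR_{jl}\nabla_kf=\tfrac{1}{2}|\nabla f|^{2}\sum_{i,j,l}R_{i'j'1l}R_{ij1l}.
\end{equation*}

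Finally I would enumerate this pure-curvature sum. Terms with $i=j$ drop out by the $(i,j)$-antisymmetry, so only the six unordered pairs $\{i,j\}$ contribute, and they split into three duality classes $\{\{1,2\},\{3,4\}\}$, $\{\{1,3\},\{2,4\}\}$, $\{\{1,4\},\{2,3\}\}$. Within any one class, the four ordered representatives all produce the same cross product—$R_{121l}R_{341l}$, $R_{131l}R_{421l}$ or $R_{141l}R_{231l}$—because the two sign changes incurred in reversing $(i,j)$ and reversing $(i',j')$ cancel, and passing to the dual pair merely interchanges the two factors. This yields $\sum_{i,j,l}R_{i'j'1l}R_{ij1l}=4\bigl(R_{121l}R_{341l}+R_{131l}R_{421l}+R_{141l}R_{231l}\bigr)$, and combining with the prefactor $\tfrac{1}{2}|\nabla f|^{2}$ gives exactly \eqref{eq:3.11}. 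The main obstacle is the bookkeeping in this last step; once one checks that within each duality class all four ordered choices give the same product with the same sign, the factor of $4$ is immediate.
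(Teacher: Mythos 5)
Your argument is correct and follows essentially the same route as the paper: antisymmetrize in $(i,j)$, substitute the soliton identity \eqref{eq:3.8} to replace $\nabla_iR_{jl}-\nabla_jR_{il}$ by a curvature term contracted with $\nabla f$, pass to the frame with $e_1=\nabla f/|\nabla f|$, and count the twelve ordered dual pairs to get the factor $4\cdot\tfrac12=2$. The only difference is presentational (you make the $(i,j)$-antisymmetry of $R_{i'j'kl}$ and the grouping into three duality classes explicit, where the paper lists all twelve terms), so no changes are needed.
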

\begin{proof}

By direct computation, we have
\begin{eqnarray}\label{eq:3.13}
R_{i'j'kl}\nabla_iR_{jl}\nabla_kf&=&\frac{1}{2}(\nabla_iR_{jl}-\nabla_jR_{il})R_{i'j'kl}\nabla_kf\notag\\
&=&\frac{1}{2}R_{ijpl}\nabla_{p}fR_{i'j'kl}\nabla_kf\notag\\
&=&\frac{1}{2}R_{ij1l}R_{i'j'1l}|\nabla f|^2\notag\\
&=&\frac{1}{2}(R_{121l}R_{341l}+R_{131l}R_{421l}+R_{141l}R_{231l})|\nabla f|^2\notag\\
&&+\frac{1}{2}(R_{211l}R_{431l}+R_{231l}R_{141l}+R_{241l}R_{311l})|\nabla f|^2\notag\\
&&+\frac{1}{2}(R_{311l}R_{241l}+R_{321l}R_{411l}+R_{341l}R_{121l})|\nabla f|^2\notag\\
&&+\frac{1}{2}(R_{411l}R_{321l}+R_{421l}R_{131l}+R_{431l}R_{211l})|\nabla f|^2\notag\\
&=&2(R_{121l}R_{341l}+R_{131l}R_{421l}+R_{141l}R_{231l})|\nabla f|^2,
\end{eqnarray}
where we used \eqref{eq:3.8} in the second equality.
\end{proof}

\begin{proposition}\label{prop:3.8}
Let $(M^4,f,g)$ be a four-dimensional gradient Ricci soliton. Then at every regular point of $f$, we have
\begin{equation}\label{eq:3.14}
|divRm|^2=2(R^2_{121l}+R^2_{341l}+R^2_{131l}+R^2_{421l}+R^2_{141l}+R^2_{231l})|\nabla f|^2.
\end{equation}
\end{proposition}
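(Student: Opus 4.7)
The core identity to establish first is $(divRm)_{ijk}=R_{ijkl}\nabla_l f$, which is a standard consequence of the Ricci soliton structure. Contracting the second Bianchi identity $\nabla_l R_{ijkm}+\nabla_i R_{jlkm}+\nabla_j R_{likm}=0$ over $m=l$ and using the Ricci contractions $\sum_l R_{jlkl}=R_{jk}$, $\sum_l R_{likl}=-R_{ik}$, yields
\[
\nabla_l R_{ijkl}=\nabla_j R_{ik}-\nabla_i R_{jk}.
\]
Combined with \eqref{eq:3.8}, this gives $(divRm)_{ijk}=R_{ijkm}\nabla_m f$.

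Next, at a regular point of $f$, pick the orthonormal frame $\{e_i\}$ with $e_1=\nabla f/|\nabla f|$, so that $\nabla_m f=|\nabla f|\,\delta_{m1}$. Squaring the identity above then produces
\[
|divRm|^2=\sum_{i,j,k,m,n} R_{ijkm}R_{ijkn}\nabla_m f\,\nabla_n f=|\nabla f|^2\sum_{i,j,k}R_{ijk1}^2.
\]

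The remaining step is purely combinatorial: use the symmetries of $Rm$ to rewrite $\sum_{i,j,k}R_{ijk1}^2$ in the stated form. Antisymmetry in the first pair gives
\[
\sum_{i,j,k}R_{ijk1}^2=2\sum_{i<j}\sum_{k}R_{ijk1}^2,
\]
and antisymmetry in the last pair (renaming the dummy $k\to l$) gives $\sum_k R_{ijk1}^2=\sum_l R_{ij1l}^2$. Summing over the six ordered pairs $(i,j)$ with $i<j$, namely $(1,2),(1,3),(1,4),(2,3),(2,4),(3,4)$, and using $R_{241l}^2=R_{421l}^2$ to match the dual-pair presentation in the statement, we arrive at
\[
\sum_{i,j,k}R_{ijk1}^2=2\bigl(R_{121l}^2+R_{341l}^2+R_{131l}^2+R_{421l}^2+R_{141l}^2+R_{231l}^2\bigr),
\]
which, multiplied by $|\nabla f|^2$, is exactly \eqref{eq:3.14}.

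There is no serious obstacle here; the only delicate point is keeping track of signs and dummy-index conventions when passing from $\nabla_l R_{ijkl}$ to $R_{ijkm}\nabla_m f$ via Bianchi and \eqref{eq:3.8}. Once that identity is in hand, the rest is just choosing the adapted frame and organizing the six independent components of $Rm$ along the distinguished direction $e_1$, mirroring the grouping into self-dual/anti-self-dual bivector pairs $\{(12),(34)\},\{(13),(42)\},\{(14),(23)\}$ used throughout Section \ref{sec: 3}.
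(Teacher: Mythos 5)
Your proposal is correct and follows essentially the same route as the paper: you derive $(divRm)_{ijk}=R_{ijkl}\nabla_l f$ from the contracted second Bianchi identity together with \eqref{eq:3.8} (this is exactly the paper's \eqref{eq:3.15}, up to which antisymmetric slot is contracted), and then square in the adapted frame $e_1=\nabla f/|\nabla f|$, using the antisymmetries of $Rm$ to organize the sum into the six pairs appearing in \eqref{eq:3.14}. No substantive difference from the paper's argument.
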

\begin{proof}
Note that
\begin{eqnarray}\label{eq:3.15}
\nabla_kR_{ijkl}&=&\nabla_iR_{jl}-\nabla_jR_{il}\notag\\
&=&-\nabla_i\nabla_{j}\nabla_{l}f+\nabla_j\nabla_{i}\nabla_{l}f\notag\\
&=&R_{ijkl}\nabla_{k}f,
\end{eqnarray}
where we used the second Bianchi identity in the first equality and \eqref{eq:1.1} in the second equality.

It follows that
\begin{eqnarray}\label{eq:3.16}
|divRm|^2&\equiv&|\nabla_kR_{ijkl}|^2\notag\\
&=&|R_{ijkl}\nabla_kf|^2\notag\\
&=&|R_{ij1l}|^2|\nabla f|^2\notag\\
&=&2(R^2_{121l}+R^2_{341l}+R^2_{131l}+R^2_{421l}+R^2_{141l}+R^2_{231l})|\nabla f|^2.
\end{eqnarray}
\end{proof}

\begin{lemma}\label{lem:3.9}
Let $(M^4,f,g)$ be a four-dimensional gradient Ricci soliton. Then we have
\begin{equation}\label{eq:3.17}
|divRm|^2\pm 2R_{i'j'kl}\nabla_iR_{jl}\nabla_kf=16|divRm^\pm|^2.
\end{equation}
\end{lemma}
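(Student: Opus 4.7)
The plan is a direct expansion of $16|divRm^\pm|^2$ in an orthonormal frame adapted to $e_1 = \nabla f/|\nabla f|$, matching the result to $|divRm|^2\pm 2R_{i'j'kl}\nabla_iR_{jl}\nabla_kf$ term by term via Propositions~\ref{prop:3.7} and \ref{prop:3.8}. Starting from Corollary~\ref{cor:3.2}, I would write $(divRm^\pm)_{ijk} = \tfrac{1}{4}(\nabla_lR_{ijkl}\pm\nabla_lR_{i'j'kl})$, square, and sum over $i,j,k$ to obtain
\[
16|divRm^\pm|^2 = \sum_{i,j,k}(\nabla_lR_{ijkl})^2 + \sum_{i,j,k}(\nabla_lR_{i'j'kl})^2 \pm 2\sum_{i,j,k}(\nabla_lR_{ijkl})(\nabla_mR_{i'j'km}).
\]
The first piece is $|divRm|^2$ by definition, and the second piece also equals $|divRm|^2$: since $(i,j)\mapsto(i',j')$ is an involution on the twelve ordered pairs with $i\neq j$, relabeling the summation variable leaves the squared norm unchanged.

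For the cross term I would apply the soliton identity $\nabla_lR_{ijkl} = R_{ijkl}\nabla_lf$ derived in the proof of Proposition~\ref{prop:3.8}, together with its dual analogue $\nabla_lR_{i'j'kl} = R_{i'j'kl}\nabla_lf$ obtained by the same Bianchi-and-soliton argument applied to the pair $(i',j')$, which is legitimate thanks to \eqref{eq:3.2}. Since $\nabla_lf = |\nabla f|\delta_{l1}$ in the adapted frame, the cross term reduces to $|\nabla f|^2\sum_{i,j,k}R_{ijk1}R_{i'j'k1}$. Grouping the twelve ordered pairs $(i,j)$ into the three dual classes $\{1,2\}\leftrightarrow\{3,4\}$, $\{1,3\}\leftrightarrow\{4,2\}$, $\{1,4\}\leftrightarrow\{2,3\}$, and using the antisymmetry of $R$ together with the involution property $(i',j')'=(i,j)$, each class contributes four equal copies, so the sum collapses to a scalar multiple of $\sum_k(R_{121k}R_{341k}+R_{131k}R_{421k}+R_{141k}R_{231k})$---precisely the combination that Proposition~\ref{prop:3.7} identifies with $R_{i'j'kl}\nabla_iR_{jl}\nabla_kf$.

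Assembling these pieces with Propositions~\ref{prop:3.7} and \ref{prop:3.8}, both sides reduce to the completed-square expression
\[
2|\nabla f|^2\sum_k\bigl[(R_{121k}\pm R_{341k})^2+(R_{131k}\pm R_{421k})^2+(R_{141k}\pm R_{231k})^2\bigr],
\]
which delivers the claimed identity. The main obstacle is the combinatorial bookkeeping of the cross term: the antisymmetries $R_{ijkl}=-R_{jikl}$, the summation ordering, and the dual involution $(i,j)\to(i',j')$ each contribute a factor of two to the multiplicity, and mis-accounting any of them would corrupt the coefficient $16$ on the right-hand side.
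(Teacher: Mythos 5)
Your route is essentially the paper's computation run in reverse: you expand $16|divRm^\pm|^2$ via Corollary~\ref{cor:3.2} and the soliton identity and then compare with Propositions~\ref{prop:3.7} and \ref{prop:3.8}, whereas the paper completes the square on the left-hand side and then identifies it with the right-hand side. Most of your intermediate steps are sound: each of the two squared pieces does equal $|divRm|^2$, and the cross term does collapse, with four copies per dual class, to $4|\nabla f|^2\sum_l\bigl(R_{121l}R_{341l}+R_{131l}R_{421l}+R_{141l}R_{231l}\bigr)$. The genuine gap is the final assembly. By Proposition~\ref{prop:3.8} each squared piece is $|divRm|^2=2|\nabla f|^2\sum_l(R_{121l}^2+\cdots)$, so their sum contributes $4|\nabla f|^2\sum_l(\text{squares})$, while your cross term contributes $\pm 8|\nabla f|^2\sum_l(\text{products})=\pm4\,R_{i'j'kl}\nabla_iR_{jl}\nabla_kf$ by Proposition~\ref{prop:3.7}. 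Hence your own decomposition yields $16|divRm^\pm|^2=2\bigl(|divRm|^2\pm2R_{i'j'kl}\nabla_iR_{jl}\nabla_kf\bigr)$, i.e.\ the completed square you arrive at is $4|\nabla f|^2\sum_l[(R_{121l}\pm R_{341l})^2+\cdots]$, not the $2|\nabla f|^2\sum_l[\cdots]$ you assert. The claim that ``both sides reduce to'' the latter expression is therefore inconsistent with your own expansion: carried out consistently, your argument establishes the identity with constant $8$ in place of $16$, and so it does not prove \eqref{eq:3.17} as stated.

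This factor of two cannot be absorbed by adjusting bookkeeping elsewhere: with the full ordered-index norm used in Proposition~\ref{prop:3.8}, one has $\sum_{i,j,l}(R_{ij1l}\pm R_{i'j'1l})^2=4\sum_l[(R_{121l}\pm R_{341l})^2+\cdots]$, whereas the paper's proof passes from $2\sum_l[\cdots]\,|\nabla f|^2$ directly to $|R_{ij1l}\pm R_{i'j'1l}|^2|\nabla f|^2$; the same factor is exactly where the paper's own argument is most delicate, and your write-up glosses over it at the same spot rather than resolving it. (For the later applications only the vanishing of $|divRm^\pm|^2$ matters up to the interplay with Proposition~\ref{prop:3.5}, but for the lemma itself the constant must be pinned down correctly.) A minor additional point: the ``dual analogue'' $\nabla_lR_{i'j'kl}=R_{i'j'kl}\nabla_lf$ requires no appeal to \eqref{eq:3.2}; it is simply \eqref{eq:3.15} with the free indices relabeled, since that identity holds for arbitrary free indices.
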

\begin{proof} From \eqref{eq:3.15}, we know that $|divRm|=0$ at the critical point of $f$. Moreover, \eqref{eq:c1} and \eqref{eq:3.15} implies that $|divRm^\pm|=0$ at the critical point of $f$. It follows that both side of \eqref{eq:3.17} are zero at the critical point of $f$.

Next, we only consider regular points of $f$.

From Propositions \ref{prop:3.7} and \ref{prop:3.8}, we have
\begin{eqnarray*}
&&|divRm|^2\pm 2R_{i'j'kl}\nabla_iR_{jl}\nabla_kf\notag\\
&=&2(R^2_{121l}+R^2_{341l}+R^2_{131l}+R^2_{421l}+R^2_{141l}+R^2_{231l})|\nabla f|^2\notag\\
&&\pm4(R_{121l}R_{341l}+R_{131l}R_{421l}+R_{141l}R_{231l})|\nabla f|^2\notag\\
&=&2[(R_{121l}\pm R_{341l})^2+(R_{131l}\pm R_{421l})^2+(R_{141l}\pm R_{231l})^2]|\nabla f|^2\notag\\
&=&|R_{ij1l}\pm R_{i'j'1l}|^2|\nabla f|^2\notag\\
&=&|R_{ijkl}\nabla_kf\pm R_{i'j'kl}\nabla_kf|^2\notag\\
&=&|\nabla_kR_{ijkl}\pm \nabla_kR_{i'j'kl}|^2\notag\\
&=&16|divRm^\pm|^2,
\end{eqnarray*}
where we used \eqref{eq:3.15} in the fifth equality and \eqref{eq:c1} in the last.
\end{proof}

\begin{lemma}\label{lem:3.10}
Let $(M^4,f,g)$ be a four-dimensional gradient Ricci soliton. Then we have
\begin{equation}\label{eq:3.18}
div^3W^\pm(\nabla f)=div^3Rm^\pm(\nabla f)+\frac{1}{48}|\nabla R|^2.
\end{equation}
\end{lemma}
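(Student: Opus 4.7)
The plan is to apply $\nabla^k\nabla^j$ to both sides of equation \eqref{eq:3.4} and then contract the free index $i$ with $\nabla^i f$. Writing \eqref{eq:3.4} as
\[
\nabla_l W^\pm_{ijkl} - \nabla_l R^\pm_{ijkl} = \tfrac{1}{24}(g_{jk}\nabla_i R - g_{ik}\nabla_j R) \pm \tfrac{1}{24}(g_{j'k}\nabla_{i'} R - g_{i'k}\nabla_{j'} R),
\]
the difference $div^3W^\pm(\nabla f) - div^3Rm^\pm(\nabla f)$ splits into contributions from the two brackets on the right.

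For the unprimed bracket, a direct computation in an orthonormal frame gives $\nabla^j(g_{jk}\nabla_i R - g_{ik}\nabla_j R) = \nabla_k\nabla_i R - g_{ik}\Delta R$, and one more divergence yields $\nabla^k\nabla^j(\cdot) = \Delta\nabla_i R - \nabla_i \Delta R = R_{ij}\nabla^j R$ by the standard commutator formula on a scalar. Contracting with $\nabla^i f$ and invoking the classical identity $R_{ij}\nabla^j f = \tfrac{1}{2}\nabla_i R$ (which holds on any gradient Ricci soliton and follows from taking the divergence of \eqref{eq:1.1}) gives $\tfrac{1}{2}|\nabla R|^2$, contributing exactly $\tfrac{1}{48}|\nabla R|^2$ after the $\tfrac{1}{24}$ prefactor.

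The main technical step is to show that the primed bracket contributes zero. I would argue this by observing that, for the $(0,3)$-tensor $S_{ijk}:=g_{jk}\nabla_i R - g_{ik}\nabla_j R$ (antisymmetric in $(ij)$), passing to $S_{i'j'k}$ is precisely the Hodge star on its first antisymmetric pair. Unfolding the definition of the duality $(ij)\mapsto(i'j')$ in an oriented orthonormal frame at a point yields the tensorial identity
\[
g_{j'k}\nabla_{i'} R - g_{i'k}\nabla_{j'} R = \epsilon_{ijpk}\nabla^p R,
\]
where $\epsilon$ denotes the volume form. Applying $\nabla^j$ then produces $\epsilon_{ijpk}\nabla^j\nabla^p R$, which vanishes identically because $\epsilon_{ijpk}$ is antisymmetric in the pair $(j,p)$ while the Hessian $\nabla^j\nabla^p R$ of the scalar $R$ is symmetric in $(j,p)$. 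Hence the primed bracket is annihilated already by a single divergence, and the subsequent $\nabla^k$ and contraction with $\nabla^i f$ produce zero. Combining the two contributions yields \eqref{eq:3.18}.
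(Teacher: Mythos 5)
Your proof is correct, and it rests on the same decomposition as the paper's: both start from the Cotton-tensor relation \eqref{eq:3.4} and apply $\nabla_k\nabla_j$ followed by contraction with $\nabla_i f$, so that the difference $div^3W^\pm(\nabla f)-div^3Rm^\pm(\nabla f)$ reduces to the two gradient-of-$R$ brackets. The execution differs in both pieces, though. For the unprimed bracket the paper simply quotes identity \eqref{eq:2.3} of Proposition \ref{prop:2.1} (which at $n=4$ reads $div^3W(\nabla f)=\tfrac12 div^3Rm(\nabla f)+\tfrac1{24}|\nabla R|^2$), while you re-derive the same contribution directly from $\Delta\nabla_iR-\nabla_i\Delta R=R_{ij}\nabla_jR$ and the soliton identity $R_{ij}\nabla_jf=\tfrac12\nabla_iR$; this is the same computation in substance, just self-contained, and your constants ($\tfrac1{24}\cdot\tfrac12=\tfrac1{48}$) check out. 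The genuinely different step is the primed bracket: the paper takes two divergences, commutes derivatives to produce the curvature term $R_{j'i'jl}\nabla_lR$ in \eqref{eq:3.19}, and kills it with the algebraic identity \eqref{eq:3.1}, whereas you identify $g_{j'k}\nabla_{i'}R-g_{i'k}\nabla_{j'}R=\epsilon_{ijpk}\nabla_pR$ (the Hodge star on the antisymmetric pair; the identity is correct, as one checks on dual pairs such as $(12)\mapsto(34)$) and observe that already a single divergence vanishes, because the parallel volume form is antisymmetric in $(j,p)$ while the Hessian of $R$ is symmetric. Your mechanism is cleaner for that piece: it uses neither the soliton equation nor \eqref{eq:3.1} there, makes the vanishing manifestly a consequence of Hessian symmetry, and avoids the paper's somewhat loosely written intermediate line in \eqref{eq:3.19}; the paper's version, in turn, stays entirely within the primed-index calculus it has already set up. Either way one arrives at \eqref{eq:3.18}.
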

\begin{proof} Since $\nabla R=2Ric(\nabla f,\cdot)$, both sides of \eqref{eq:3.18} are zero at the critical point of $f$. In the following, we only consider regular points of $f$.
Note that
\begin{eqnarray*}
\nabla_lW_{i'j'kl}&=&-\frac{1}{2}C_{i'j'k}\notag\\
&=&-\frac{1}{2}[\nabla_{i'}R_{j'k}-\nabla_{j'}R_{i'k}-\frac{1}{6}(g_{j'k}\nabla_{i'}R-g_{i'k}\nabla_{j'}R)]\notag\\
&=&\frac{1}{2}\nabla_lR_{i'j'kl}+\frac{1}{12}(g_{j'k}\nabla_{i'}R-g_{i'k}\nabla_{j'}R),
\end{eqnarray*}
where we used the second Bianchi identity. It follows that
\begin{eqnarray}\label{eq:3.19}
&&\nabla_k\nabla_j\nabla_lW_{i'j'kl}\nabla_if\notag\\
&=&\frac{1}{2}\nabla_k\nabla_j\nabla_lR_{i'j'kl}\nabla_if+\frac{1}{12}(\nabla_{j'}\nabla_j\nabla_{i'}R-\nabla_{i'}\nabla_j\nabla_{j'}R)\notag\\
&=&\frac{1}{2}\nabla_k\nabla_j\nabla_lR_{i'j'kl}\nabla_if+\frac{1}{12}R_{j'i'jl}\nabla_lR\notag\\
&=&\frac{1}{2}\nabla_k\nabla_j\nabla_lR_{i'j'kl}\nabla_if,
\end{eqnarray}
where we used \eqref{eq:3.1}.

By direct computation, we have
\begin{eqnarray*}
&&div^3W^\pm(\nabla f)\notag\\
&=&\frac{1}{2}(\nabla_k\nabla_j\nabla_lW_{ijkl}\nabla_i f\pm \nabla_k\nabla_j\nabla_lW_{i'j'kl}\nabla_i f)\notag\\
&=&\frac{1}{4}\nabla_k\nabla_j\nabla_lR_{ijkl}\nabla_i f+\frac{1}{48}|\nabla R|^2\pm\frac{1}{4}\nabla_k\nabla_j\nabla_lR_{i'j'kl}\nabla_if\notag\\
&=&div^3Rm^\pm(\nabla f)+\frac{1}{48}|\nabla R|^2.
\end{eqnarray*}
where we used \eqref{eq:2.3} and \eqref{eq:3.19} in the second equality. Moreover, we used \eqref{eq:c3} in the last.
\end{proof}
\section{Integral Identities for 4-dimensional Gradient Shrinking Ricci Solitons } 
\label{sec: 4}
In this section, we prove some integral identities for 4-dimensional gradient shrinking Ricci solitons which will be needed in the proof of Theorem \ref{thm:1.1} and Theorem \ref{thm:1.2}.

For the complete non-compact case, let $\phi(t)=1$ on $[0,s]$, $\phi(t)=\frac{2s-t}{s}$ on $(s,2s)$ and $\phi(t)=0$ on $[2s,\infty)$. From \eqref{eq:2.4}, we know that $f$ is of quadratic growth. Therefore, $\phi(f)$ has compact support in $M^4$ for any fixed constant $s>0$.
\begin{proposition}\label{prop:4.1}
Let $(M^4,g,f)$ be a four-dimensional complete non-compact gradient shrinking Ricci soliton, then we have
\begin{equation}\label{eq:4.1}
\int R_{i'j'kl}R_{jl}\nabla_if\nabla_kf\phi(f)e^{-f}=0.
\end{equation}
\end{proposition}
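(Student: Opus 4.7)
The plan is to establish the identity via weighted integration by parts, using (3.2) together with the soliton divergence identity $\nabla_l R_{i'j'kl}=R_{i'j'kl}\nabla_l f$ and the antisymmetries of $R_{i'j'kl}$. First I would rewrite the integrand using Proposition 3.x as $R_{i'j'kl}R_{jl}\nabla_i f\nabla_k f=-R_{i'j'kl}\nabla_j\nabla_l f\nabla_i f\nabla_k f$, which converts one Ricci factor into a Hessian of $f$ that can be integrated against.

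Next I would introduce two auxiliary vector fields,
\[
V^i=R_{i'j'kl}R_{jl}\nabla_k f,\qquad W^k=R_{i'j'kl}R_{jl}\nabla_i f,
\]
both of which satisfy $\langle V,\nabla f\rangle=\langle W,\nabla f\rangle=I$ where $I$ denotes the integrand. Computing $\nabla_i V^i$, the term $\nabla_i R_{i'j'kl}$ is killed by (3.2); computing $\nabla_k W^k$, the term $\nabla_k R_{i'j'kl}$ reduces to $R_{i'j'kl}\nabla_k f$ by the soliton identity and produces a copy of $I$. In both cases the Hessian factor $\nabla_i\nabla_k f=\lambda g_{ik}-R_{ik}$ from the soliton equation contributes a $\lambda$-part that vanishes, since $\sum_i R_{i'j'il}=0$ and $\sum_j R_{i'j'kj}=0$ by the first Bianchi identity applied to the cyclic sums in (3.1), together with the pair symmetry of Riemann.

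Applying the weighted integration by parts
\[
\int U^a(\phi'(f)-\phi(f))\nabla_a f\,e^{-f}dV=-\int(\nabla_a U^a)\phi e^{-f}dV
\]
to both $V$ and $W$, and then subtracting the two resulting identities, the common cubic curvature term $\int R_{i'j'kl}R_{jl}R_{ik}\phi e^{-f}dV$ and the $\phi'$-boundary contributions cancel, leaving
\[
\int R_{i'j'kl}R_{jl}\nabla_i f\nabla_k f\,\phi(f)e^{-f}dV=\int R_{i'j'kl}\bigl(\nabla_i R_{jl}\nabla_k f-\nabla_k R_{jl}\nabla_i f\bigr)\phi(f)e^{-f}dV.
\]
The final step is to kill the right-hand side. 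Using (3.8) to substitute $\nabla_i R_{jl}=\nabla_j R_{il}-R_{ijlm}\nabla_m f$ and similarly for $\nabla_k R_{jl}$, the term $R_{i'j'kl}\nabla_j R_{kl}$ vanishes because $R_{kl}$ is symmetric and $R_{i'j'kl}$ antisymmetric in $(k,l)$, while the term $R_{i'j'kl}\nabla_j R_{il}\nabla_k f$ coincides with $-R_{i'j'kl}\nabla_i R_{jl}\nabla_k f$ after the $(i\leftrightarrow j)$ dummy relabeling (which flips the sign of $R_{i'j'kl}$). The remaining Riemann$\otimes$Riemann contractions should then organize into a form that vanishes by $R_{i'j'kl}\nabla_k f\nabla_l f=0$.

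The main obstacle will be the combinatorial bookkeeping forced by the primed-index structure: because $(i',j')$ is the dual pair of the summation indices $(i,j)$, every dummy relabeling induces a corresponding change in $(i',j')$ and must be tracked carefully, and the auxiliary cyclic-sum identities $\sum_i R_{i'j'il}=\sum_j R_{i'j'jl}=\sum_j R_{i'j'kj}=0$ must be invoked repeatedly without conflating the roles of free and summed indices.
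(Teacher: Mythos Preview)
Your approach differs substantially from the paper's and is considerably more involved. The paper's argument is almost a one-liner once you see the right observation: after using Proposition~3.x to write the integrand as $-R_{i'j'kl}\nabla_j\nabla_lf\,\nabla_if\nabla_kf$, the key point is the \emph{pointwise} identity $R_{i'j'kl}\nabla_if\nabla_kf\nabla_lf=0$, which follows immediately from the antisymmetry of $R_{i'j'kl}$ in $(k,l)$. A single integration by parts in $j$ then rewrites the weighted integral as $\int\nabla_j\bigl(R_{i'j'kl}\nabla_if\nabla_kf\,\psi\bigr)\nabla_lf$, and the paper argues this integrand vanishes pointwise at regular points of $f$ (hence everywhere by continuity and analyticity). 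No auxiliary vector fields, no subtraction of two identities, no residual curvature terms to organize.

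By contrast, your route introduces $V$ and $W$, subtracts two weighted divergence identities, and arrives at
\[
\int I\,\phi e^{-f}=\int R_{i'j'kl}\bigl(\nabla_iR_{jl}\nabla_kf-\nabla_kR_{jl}\nabla_if\bigr)\phi e^{-f}.
\]
Here is the gap: your final step is only a hope. After substituting via (3.8) as you propose, the right-hand side becomes
\[
-\tfrac{1}{2}\int R_{i'j'kl}R_{ijlm}\nabla_mf\nabla_kf\,\phi e^{-f}\;-\;\int R_{i'j'kl}R_{jklm}\nabla_mf\nabla_if\,\phi e^{-f},
\]
and neither term contains the contraction $\nabla_kf\nabla_lf$ needed to invoke $R_{i'j'kl}\nabla_kf\nabla_lf=0$. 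The first term pairs $\nabla f$ with the indices $(m,k)$, the second with $(m,i)$; no antisymmetry of $R_{i'j'kl}$ sits across either pair, and these quadratic curvature contractions are not zero pointwise on a general soliton. So the proposal, as written, does not close. The paper's direct pointwise argument---feeding three copies of $\nabla f$ into $R_{i'j'kl}$ so that two of them land on the antisymmetric $(k,l)$ slot---sidesteps all of this bookkeeping.
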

\begin{proof}

Integrating \eqref{eq:3.x}, we have
\begin{eqnarray}\label{4.x1}
&&\int R_{i'j'kl}R_{jl}\nabla_if\nabla_kf(\phi(f)-\phi'(f))e^{-f}\notag\\
&=&-\int R_{i'j'kl}\nabla_j\nabla_lf\nabla_i f\nabla_k f(\phi(f)-\phi'(f))e^{-f}\notag\\
&=&\int\nabla_j(R_{i'j'kl}\nabla_i f\nabla_k f(\phi(f)-\phi'(f))e^{-f})\nabla_lf.
\end{eqnarray}

We claim that $\nabla_j(R_{i'j'kl}\nabla_i f\nabla_k f(\phi(f)-\phi'(f))e^{-f})\nabla_lf\equiv0$ on $M^4$. The arguments can be divided into two cases:

$\bullet$ Case 1: $|\nabla f|^2=0$ on some nonempty open set. In this case, since any gradient shrinking Ricci soliton is analytic in harmonic coordinates, it follows that $|\nabla f|^2=0$ on $M^4$. It is clear that $\nabla_j(R_{i'j'kl}\nabla_i f\nabla_k f(\phi(f)-\phi'(f))e^{-f})\nabla_lf\equiv0$ on $M^4$.

$\bullet$ Case 2: The set $\Theta:=\{x\in M^4|\nabla f(x)\neq0\}$ is dense in $M^4$. Note that $$\nabla_j(R_{i'j'kl}\nabla_i f\nabla_k f(\phi(f)-\phi'(f))e^{-f})\nabla_lf=\nabla_j(R_{1'j'11}|\nabla f|^2(\phi(f)-\phi'(f))e^{-f})|\nabla f|=0$$ on $\Theta$, the continuity implies that $\nabla_j(R_{i'j'kl}\nabla_i f\nabla_k f\phi(f)e^{-f})\nabla_lf\equiv0$ on $M^4$.

This completes the proof of Proposition \ref{prop:4.1}.
\end{proof}

\begin{remark}\label{rmk:4.2}
Note that $\phi(f)-a\phi'(f)$ still has compact support in $M^4$ for any $a>0$ and any fixed constant $s>0$. By the same arguments as in the proof of Proposition \ref{prop:4.1}, we obtain that a four-dimensional complete non-compact gradient shrinking Ricci soliton satisfies
\begin{equation}\label{eq:4.x2}
\int R_{i'j'kl}R_{jl}\nabla_if\nabla_kf(\phi(f)-a\phi'(f))e^{-f}=0
\end{equation}
for any $a>0$.
\end{remark}

\begin{remark}\label{rmk:4.3}
From the proof of Proposition \ref{prop:4.1}, we know that a four-dimensional compact gradient shrinking Ricci soliton satisfies
\begin{equation}\label{eq:4.x3}
\int R_{i'j'kl}R_{jl}\nabla_if\nabla_jfe^{-f}=0.
\end{equation}
\end{remark}

\begin{proposition}\label{prop:4.3}
Let $(M^4,g,f)$ be a four-dimensional complete non-compact gradient shrinking Ricci soliton, then we have
\begin{equation}\label{eq:4.5}
\int R_{i'j'kl}R_{jl}\nabla_i\nabla_kf\phi(f)e^{-f}=-\int R_{i'j'kl}\nabla_iR_{jl}\nabla_kf\phi(f)e^{-f}.
\end{equation}
\end{proposition}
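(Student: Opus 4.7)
The strategy is direct integration by parts: transfer the $\nabla_i$ in the factor $\nabla_i\nabla_kf$ onto the remaining part of the integrand, pairing it with the $\nabla_kf$ that will be left behind. Since $f$ is of quadratic growth by \eqref{eq:2.4}, the cutoff $\phi(f)$ has compact support on $M^4$, so no boundary terms arise and one obtains
\begin{equation*}
\int R_{i'j'kl}R_{jl}\nabla_i\nabla_kf\,\phi(f)e^{-f}\;=\;-\int \nabla_kf\,\nabla_i\bigl(R_{i'j'kl}R_{jl}\phi(f)e^{-f}\bigr).
\end{equation*}

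Expanding $\nabla_i$ on the right-hand side by the Leibniz rule produces three pieces. The piece containing $\nabla_iR_{i'j'kl}$ vanishes identically thanks to \eqref{eq:3.2}. The piece containing $\nabla_iR_{jl}$ is exactly
\begin{equation*}
-\int R_{i'j'kl}\nabla_iR_{jl}\nabla_kf\,\phi(f)e^{-f},
\end{equation*}
which is the right-hand side of \eqref{eq:4.5}. The remaining piece comes from the weight: using $\nabla_i(\phi(f)e^{-f})=(\phi'(f)-\phi(f))\nabla_if\,e^{-f}$, it reads
\begin{equation*}
-\int R_{i'j'kl}R_{jl}\nabla_if\nabla_kf\,(\phi'(f)-\phi(f))e^{-f}.
\end{equation*}

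The final step is to check that this weight-derivative contribution vanishes, and this is precisely what Remark \ref{rmk:4.2} is designed to deliver: taking $a=1$ in \eqref{eq:4.x2} gives
\begin{equation*}
\int R_{i'j'kl}R_{jl}\nabla_if\nabla_kf\,(\phi(f)-\phi'(f))e^{-f}=0,
\end{equation*}
so the third piece is zero. Assembling the three contributions yields \eqref{eq:4.5}. The only genuine subtlety in the argument is the justification of the integration by parts on a non-compact manifold; this is handled cleanly by the compact support of $\phi(f)$, and once that point is made, the identity reduces to the algebraic combination of \eqref{eq:3.2} with Remark \ref{rmk:4.2}.
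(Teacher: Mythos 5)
Your argument is correct and follows essentially the same route as the paper: integrate by parts (justified by the compact support of $\phi(f)$), kill the $\nabla_iR_{i'j'kl}$ term via \eqref{eq:3.2}, and dispose of the weight-derivative term $\int R_{i'j'kl}R_{jl}\nabla_if\nabla_kf(\phi(f)-\phi'(f))e^{-f}$ using \eqref{eq:4.x2} with $a=1$. The only cosmetic difference is that the paper first rewrites $R_{jl}$ via the soliton equation \eqref{eq:1.1} before invoking \eqref{eq:3.2}, a detour your direct application of \eqref{eq:3.2} avoids.
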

\begin{proof}
By direct computation, we have
\begin{eqnarray*}
&&\int R_{i'j'kl}R_{jl}\nabla_i\nabla_kf\phi(f)e^{-f}\notag\\
&=&-\int \nabla_iR_{i'j'kl}R_{jl}\nabla_kf\phi(f)e^{-f}-\int R_{i'j'kl}\nabla_iR_{jl}\nabla_kf\phi(f)e^{-f}\notag\\
&&+\int R_{i'j'kl}R_{jl}\nabla_if\nabla_kf\phi(f)e^{-f}-\int R_{i'j'kl}R_{jl}\nabla_if\nabla_kf\phi'(f)e^{-f}\notag\\
&=&-\frac{1}{2}\int \nabla_iR_{i'j'kj}\nabla_kf\phi(f)e^{-f}+\int \nabla_iR_{i'j'kl}\nabla_j\nabla_lf\nabla_kf\phi(f)e^{-f}\notag\\
&&-\int R_{i'j'kl}\nabla_iR_{jl}\nabla_kf\phi(f)e^{-f}\notag\\
&=&-\int R_{i'j'kl}\nabla_iR_{jl}\nabla_kf\phi(f)e^{-f},
\end{eqnarray*}
where we used \eqref{eq:4.x2} and \eqref{eq:1.1} in the second equality. Moreover, we used \eqref{eq:3.2} in the last equality.
\end{proof}

\begin{remark}\label{rmk:4.2}
From the proof of Proposition \ref{prop:4.3}, we know that a four-dimensional compact gradient shrinking Ricci soliton satisfies
\begin{equation}\label{eq:4.x4}
\int R_{i'j'kl}R_{jl}\nabla_i\nabla_kfe^{-f}=-\int R_{i'j'kl}\nabla_iR_{jl}\nabla_kfe^{-f}.
\end{equation}
\end{remark}

\begin{lemma}\label{lem:4.6}
Let $(M^4,g,f)$ be a four-dimensional complete non-compact gradient shrinking Ricci soliton, then we have
\begin{equation}\label{eq:4.11}
\int  div^3Rm^\pm(\nabla f)\phi(f)e^{-f}=-2\int|divRm^\pm|^2\phi(f)e^{-f}.
\end{equation}
\end{lemma}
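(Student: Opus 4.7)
The plan is to expand $div^3Rm^\pm(\nabla f)$ via Corollary \ref{cor:3.2} and handle the two resulting pieces separately. Writing
\begin{equation*}
div^3Rm^\pm(\nabla f)=\frac{1}{4}(div^3Rm)(\nabla f)\pm\frac{1}{4}\nabla_k\nabla_j\nabla_lR_{i'j'kl}\nabla_if,
\end{equation*}
the first piece is immediately pointwise controlled by Proposition \ref{prop:2.1}, giving $\frac{1}{4}(div^3Rm)(\nabla f)=-\frac{1}{8}|divRm|^2$.

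For the dual term, I would first apply Proposition \ref{prop:3.6} to collapse two derivatives, $\nabla_j\nabla_lR_{i'j'kl}=-R_{i'j'kl}R_{jl}$, and then integrate by parts once in the index $k$ against the weight $\phi(f)e^{-f}$. The boundary terms that arise are
\begin{equation*}
\int R_{i'j'kl}R_{jl}\nabla_kf\nabla_if\,\phi'(f)e^{-f}\quad\text{and}\quad\int R_{i'j'kl}R_{jl}\nabla_kf\nabla_if\,\phi(f)e^{-f}.
\end{equation*}
Both of these vanish by Proposition \ref{prop:4.1} and Remark \ref{rmk:4.2}: since \eqref{eq:4.x2} holds for every $a>0$ with a linear dependence, the $\phi(f)$-term and the $\phi'(f)$-term each must vanish independently. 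What remains is $\int R_{i'j'kl}R_{jl}\nabla_i\nabla_kf\,\phi(f)e^{-f}$, which Proposition \ref{prop:4.3} rewrites as $-\int R_{i'j'kl}\nabla_iR_{jl}\nabla_kf\,\phi(f)e^{-f}$. Combining the two pieces gives
\begin{equation*}
\int div^3Rm^\pm(\nabla f)\phi(f)e^{-f}=-\frac{1}{8}\int\bigl(|divRm|^2\pm 2R_{i'j'kl}\nabla_iR_{jl}\nabla_kf\bigr)\phi(f)e^{-f}.
\end{equation*}

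The final step is the algebraic identity of Lemma \ref{lem:3.9}, which identifies the integrand on the right with $16|divRm^\pm|^2$; dividing by $8$ yields the claimed factor of $-2$.

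The main obstacle I anticipate is justifying the integration by parts cleanly despite the cutoff $\phi(f)$ and making sure the boundary/cutoff terms are absorbed exactly by the known integral identities from Section \ref{sec: 4}. Concretely, one has to confirm that the vanishing statement extracted from Remark \ref{rmk:4.2} truly gives both $\int R_{i'j'kl}R_{jl}\nabla_if\nabla_kf\,\phi(f)e^{-f}=0$ and the analogous statement with $\phi'(f)$; this follows because the relation in \eqref{eq:4.x2} is affine in the parameter $a$ and holds for every positive $a$. Once that separation is in hand, the remaining manipulations are tensorial bookkeeping, and the algebraic identity from Lemma \ref{lem:3.9} produces the squared divergence on the right-hand side without any further analysis.
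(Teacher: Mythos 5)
Your proposal is correct and follows essentially the same route as the paper's proof: decompose via Corollary \ref{cor:3.2}, use \eqref{eq:2.2} pointwise on the ordinary part, apply Proposition \ref{prop:3.6} and one integration by parts on the dual part, dispose of the cutoff terms via Proposition \ref{prop:4.1}/Remark \ref{rmk:4.2} together with Proposition \ref{prop:4.3}, and conclude with Lemma \ref{lem:3.9}. The only cosmetic differences are that you apply Proposition \ref{prop:3.6} before rather than after integrating by parts, and that you argue the $\phi(f)$- and $\phi'(f)$-weighted terms vanish separately, whereas the paper only needs the single combination $\phi(f)-\phi'(f)$, which is \eqref{eq:4.x2} with $a=1$.
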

\begin{proof}
By direct computation, we have
\begin{eqnarray*}
&&\int div^3Rm^\pm(\nabla f)\phi(f)e^{-f}\notag\\
&=&\frac{1}{4}\int\nabla_k\nabla_j\nabla_lR_{ijkl}\nabla_i f\phi(f)e^{-f}\pm\frac{1}{4}\int\nabla_k\nabla_j\nabla_lR_{i'j'kl}\nabla_i f\phi(f)e^{-f}\notag\\
&=&-\frac{1}{8}\int|divRm|^2\phi(f)e^{-f}\mp\frac{1}{4}\int\nabla_j\nabla_lR_{i'j'kl}\nabla_k\nabla_i f\phi(f)e^{-f}\notag\\
&&\pm\frac{1}{4}\int\nabla_j\nabla_lR_{i'j'kl}\nabla_i f\nabla_kf(\phi(f)-\phi'(f))e^{-f}\notag\\
&=&-\frac{1}{8}\int|divRm|^2\phi(f)e^{-f}\pm\frac{1}{4}\int R_{i'j'kl}R_{jl}\nabla_i\nabla_k f\phi(f)e^{-f}\notag\\
&&\mp\frac{1}{4}\int R_{i'j'kl}R_{jl}\nabla_i f\nabla_kf(\phi(f)-\phi'(f))e^{-f}\notag\\
&=&-\frac{1}{8}\int|divRm|^2\phi(f)e^{-f}\mp\frac{1}{4}\int R_{i'j'kl}\nabla_iR_{jl}\nabla_k f\phi(f)e^{-f}\notag\\
&=&-2\int|divRm^\pm|^2\phi(f)e^{-f},
\end{eqnarray*}
where we used \eqref{eq:c3} in the first equality, \eqref{eq:2.2} in the second equality and Proposition \ref{prop:3.6} in the third equality. Moreover, we used \eqref{eq:4.x2} and Proposition \ref{prop:4.3} in the fourth equality. We used Lemma \ref{lem:3.9} in the last equality.
\end{proof}

\begin{remark}\label{rmk:4.6}
Note that $\phi(f)-a\phi'(f)$ still has compact support in $M^4$ for any $a>0$ and any fixed constant $s>0$. By the same arguments as in the proof of Lemma \ref{lem:4.6}, we obtain that a four-dimensional complete non-compact gradient shrinking Ricci soliton satisfies
\begin{equation}\label{eq:4.12}
\int  div^3Rm^\pm(\nabla f)(\phi(f)-a\phi'(f))e^{-f}=-2\int|divRm^\pm|^2(\phi(f)-a\phi'(f))e^{-f}
\end{equation}
for any $a>0$.
\end{remark}

\begin{remark}\label{rmk:4.7}
From the proof of Lemma \ref{lem:4.6}, we know that a four-dimensional compact gradient shrinking Ricci soliton satisfies
\begin{equation}\label{eq:4.x5}
\int  div^3Rm^\pm(\nabla f)e^{-f}=-2\int|divRm^\pm|^2e^{-f}.
\end{equation}
\end{remark}

\section{Main Results of Gradient Shrinking Ricci Soltions} 
\label{sec: 5}
In this section, we prove Theorems \ref{thm:1.1} and \ref{thm:1.2}.
\begin{theorem}\label{thm:5.2}
Let $(M^4,g,f)$ be a four-dimensional gradient shrinking Ricci soliton. If $div^4Rm^\pm=0$, then $(M^4,g,f)$ is either Einstein or a finite quotient of $\mathbb{R}^4$, $\mathbb{S}^2\times\mathbb{R}^2$ or $\mathbb{S}^3\times\mathbb{R}$.
\end{theorem}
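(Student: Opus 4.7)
My plan is to reduce the hypothesis $\mathrm{div}^4 Rm^\pm = 0$ to the stronger conclusion $\mathrm{div}\, Rm^\pm = 0$, and then invoke Proposition \ref{prop:3.3} to get $\mathrm{div}\, W^\pm = 0$, at which point Lemma \ref{lem:2.4} finishes the classification. The bridge between $\mathrm{div}^4 Rm^\pm$ and $\mathrm{div}\, Rm^\pm$ is built by a single integration by parts together with the weighted integral identity already proved in Lemma \ref{lem:4.6} (more precisely its cut-off version in Remark \ref{rmk:4.6}, equation \eqref{eq:4.12}).

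For the non-compact case, I would take the cut-off $\phi$ defined in Section \ref{sec: 4} (so $\phi(f)$ has compact support thanks to the quadratic estimate \eqref{eq:2.4}) and integrate $\mathrm{div}^4 Rm^\pm$ against $\phi(f) e^{-f}$. Since $\nabla_i\bigl(\phi(f)e^{-f}\bigr) = \bigl(\phi'(f)-\phi(f)\bigr)\nabla_i f\, e^{-f}$, one integration by parts yields
\begin{equation*}
\int \mathrm{div}^4 Rm^\pm\, \phi(f) e^{-f} \;=\; \int \mathrm{div}^3 Rm^\pm(\nabla f)\bigl(\phi(f)-\phi'(f)\bigr)e^{-f}.
\end{equation*}
Applying \eqref{eq:4.12} with $a=1$ to the right-hand side transforms it into
\begin{equation*}
\int \mathrm{div}^4 Rm^\pm\, \phi(f) e^{-f} \;=\; -2\int |\mathrm{div}\, Rm^\pm|^2\bigl(\phi(f)-\phi'(f)\bigr)e^{-f}.
\end{equation*}
Under the hypothesis $\mathrm{div}^4 Rm^\pm=0$ the left-hand side is zero, while the weight $\phi(f)-\phi'(f)$ is non-negative (indeed, $\phi\ge 0$ and $\phi'\le 0$ on $\mathbb{R}$). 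Therefore $|\mathrm{div}\, Rm^\pm|^2 \equiv 0$ on the region $\{f\le s\}$, and letting $s\to\infty$ gives $\mathrm{div}\, Rm^\pm = 0$ on all of $M^4$. For the compact case the same computation, carried out without a cut-off and using the closed form of Remark \ref{rmk:4.7}, gives the same conclusion directly.

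Once $\mathrm{div}\, Rm^\pm=0$ is established, Proposition \ref{prop:3.3} yields $\mathrm{div}\, W^\pm = 0$, and Lemma \ref{lem:2.4} concludes that $(M^4,g,f)$ is either Einstein or a finite quotient of $\mathbb{R}^4$, $\mathbb{S}^2\times\mathbb{R}^2$, or $\mathbb{S}^3\times\mathbb{R}$.

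The main obstacle is the integration-by-parts step in the non-compact setting, since one must ensure that the boundary contribution induced by the cut-off is controlled in the limit $s\to\infty$. The sign of the weight $\phi-\phi'\ge 0$ is crucial here: it allows the argument to avoid any delicate cancellation and to conclude by Fatou/monotone convergence rather than by proving the limit of the $\phi'$-term vanishes outright. This is precisely the place where the relaxation to $\int \mathrm{div}^4 Rm^\pm\, e^{-f}\ge 0$ in Remark \ref{rmk:1.1} becomes natural, and where the integrability machinery provided by Lemmas \ref{lem:2.2} and \ref{lem:2.3} would enter if one wanted to work with the relaxed assumption.
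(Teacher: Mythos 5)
Your proposal is correct and follows essentially the same route as the paper: integrate $div^4Rm^\pm$ against $\phi(f)e^{-f}$ (or $e^{-f}$ in the compact case), invoke the identity of Lemma \ref{lem:4.6}/Remark \ref{rmk:4.6} (resp.\ Remark \ref{rmk:4.7}), use the sign of the weight $\phi(f)-\phi'(f)$ and let $s\to\infty$ to conclude $divRm^\pm\equiv0$ by analyticity, then finish via Proposition \ref{prop:3.3} and Lemma \ref{lem:2.4}. This is exactly the paper's argument (its Remark \ref{rem:3.4} packages the last step), so no further comment is needed.
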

\begin{proof}
$\bullet$ The compact case:

Integrating by parts, we have
\begin{equation}\label{eq:5.x1}
\int div^4Rm^\pm e^{-f}=\int div^3Rm^\pm(\nabla f)e^{-f}=-2\int |divRm^\pm|^2e^{-f},
\end{equation}
where we used \eqref{eq:4.x5}. Since $div^4Rm^\pm=0$, it follows from \eqref{eq:5.x1} that
\begin{equation}\label{eq:5.x2}
\int |divRm^\pm|^2e^{-f}=0.
\end{equation}

$\bullet$ The complete non-compact case:

$\phi(t)$ is defined in Section \ref{sec: 4}, i.e. $\phi(t)=1$ on $[0,s]$, $\phi(t)=\frac{2s-t}{s}$ on $(s,2s)$ and $\phi(t)=0$ on $[2s,\infty)$. Integrating by parts, we have
\begin{eqnarray}\label{eq:5.1}
\int div^4Rm^\pm\phi(f)e^{-f}&=&\int div^3Rm^\pm(\nabla f)(\phi(f)-\phi'(f))e^{-f}\notag\\
&=&-2\int |divRm^\pm|^2(\phi(f)-\phi'(f))e^{-f},
\end{eqnarray}
where we used \eqref{eq:4.12}.

Note that $div^4Rm^\pm=0$, $\phi(f)-\phi'(f)\geq0$ on $M^4$ and $\phi(f)-\phi'(f)=1$ on the compact set $D(\frac{s}{2}):=\{x\in M^4|f(x)\leq \frac{s}{2}\}$. It follows from \eqref{eq:5.1} that
\begin{equation}
\int_{D(\frac{s}{2})} |divRm^\pm|^2e^{-f}=0.
\end{equation}
Taking $s\rightarrow+\infty$, we have
\begin{equation}\label{eq:5.x3}
\int |divRm^\pm|^2e^{-f}=0.
\end{equation}

It follows from \eqref{eq:5.x2} and \eqref{eq:5.x3} that $divRm^\pm=0$ $a.$ $e.$ on $M^4$. Since any gradient shrinking Ricci soliton is analytic in harmonic coordinates, we have $divRm^\pm\equiv0$. From Remark \ref{rem:3.4}, we know that $(M^4,g,f)$ is either Einstein or a finite quotient of $\mathbb{R}^4$, $\mathbb{S}^2\times\mathbb{R}^2$ or $\mathbb{S}^3\times\mathbb{R}$.
\end{proof}

\begin{theorem}\label{thm:5.4}
Let $(M^4,g,f)$ be a four-dimensional gradient shrinking Ricci soliton. If $div^4W^\pm=0$. Then $(M^4,g,f)$ is either Einstein or a finite quotient of $\mathbb{R}^4$, $\mathbb{S}^2\times\mathbb{R}^2$ or $\mathbb{S}^3\times\mathbb{R}$.
\end{theorem}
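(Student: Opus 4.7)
The plan is to mimic the proof of Theorem 5.2 but with $W^\pm$ in place of $Rm^\pm$, using Lemma 3.10 to swap the divergences and Proposition 3.5 to control the error term.

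\textbf{Step 1 (Integration by parts).} Both in the compact case and in the complete non-compact case with the cutoff $\phi(f)$ from Section \ref{sec: 4}, integration by parts with respect to the outermost $\nabla_i$ in $div^4 W^\pm = \nabla_i\nabla_k\nabla_j\nabla_l W^\pm_{ijkl}$ against the weight $e^{-f}$ produces
\begin{equation*}
\int div^4 W^\pm\, \phi(f)\, e^{-f} = \int div^3 W^\pm(\nabla f)\,(\phi(f)-\phi'(f))\, e^{-f},
\end{equation*}
since the only effect of $e^{-f}$ and $\phi(f)$ is to pick up $(\phi(f)-\phi'(f))\nabla_i f$. In the compact case, drop $\phi(f)$ and $\phi'(f)$.

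\textbf{Step 2 (Swap $W^\pm$ for $Rm^\pm$).} By Lemma \ref{lem:3.10},
\begin{equation*}
div^3W^\pm(\nabla f)=div^3Rm^\pm(\nabla f)+\frac{1}{48}|\nabla R|^2.
\end{equation*}
Substitute this into the right hand side of Step 1, and apply Lemma \ref{lem:4.6} (and Remark \ref{rmk:4.6}, or Remark \ref{rmk:4.7} in the compact case) to the $div^3 Rm^\pm(\nabla f)$ part. This gives
\begin{equation*}
\int div^4 W^\pm\, \phi(f)\, e^{-f} = -2\int|divRm^\pm|^2(\phi(f)-\phi'(f))e^{-f}+\frac{1}{48}\int|\nabla R|^2(\phi(f)-\phi'(f))e^{-f}.
\end{equation*}

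\textbf{Step 3 (Rewrite in terms of $W^\pm$).} From the identity buried in the proof of Proposition \ref{prop:3.5}, namely $|divW^\pm|^2 = |divRm^\pm|^2 - \frac{1}{48}|\nabla R|^2$, we can substitute $|divRm^\pm|^2 = |divW^\pm|^2+\frac{1}{48}|\nabla R|^2$ to get
\begin{equation*}
\int div^4 W^\pm\, \phi(f)\, e^{-f} = -2\int|divW^\pm|^2(\phi(f)-\phi'(f))e^{-f}-\frac{1}{48}\int|\nabla R|^2(\phi(f)-\phi'(f))e^{-f}.
\end{equation*}

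\textbf{Step 4 (Conclude $divW^\pm=0$).} Since $div^4 W^\pm=0$ (or, more generally, $\int div^4 W^\pm e^{-f}\ge 0$, as in Remark \ref{rmk:1.2}), and since $\phi(f)-\phi'(f)\ge 0$ with $\phi(f)-\phi'(f)=1$ on $\{f\le s\}$, both non-positive integrals on the right are forced to vanish on $\{f\le s\}$; letting $s\to\infty$ in the non-compact case (the compact case is immediate), we obtain $divW^\pm\equiv 0$ on $M^4$ by the analyticity of shrinking Ricci solitons in harmonic coordinates. Lemma \ref{lem:2.4} then finishes the proof.

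The main obstacle is really just the bookkeeping in Step 2: one must make sure the cutoff $\phi(f)-\phi'(f)$ (rather than $\phi(f)$) appears on both sides so that Lemma \ref{lem:4.6}/Remark \ref{rmk:4.6} apply verbatim. After that, the only conceptual input is the sign miracle $|divRm^\pm|^2-|divW^\pm|^2 = \frac{1}{48}|\nabla R|^2 \ge 0$ from Proposition \ref{prop:3.5}, which is exactly what makes the two remaining terms in Step 3 have the same (non-positive) sign.
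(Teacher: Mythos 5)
Your proposal is correct and follows essentially the same route as the paper: integrate by parts with the cutoff $\phi(f)$, use Lemma \ref{lem:3.10} together with Remark \ref{rmk:4.6} (resp.\ \eqref{eq:4.x5} in the compact case), and exploit the identity behind Proposition \ref{prop:3.5} to force the remaining terms to vanish. The only cosmetic difference is that you conclude $divW^\pm\equiv0$ directly and invoke Lemma \ref{lem:2.4}, whereas the paper first deduces $\nabla R\equiv 0$ and $divRm^\pm\equiv0$ and then passes through Remark \ref{rem:3.4}; both endgames are equivalent.
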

\begin{proof}
From the proof of Theorem \ref{thm:5.2}, we only need to show that $\int|divRm^\pm|^2e^{-f}=0$.

$\bullet$ The compact case:

Integrating by parts, we have
\begin{eqnarray}
\int div^4W^\pm e^{-f}&=&\int div^3W^\pm(\nabla f)e^{-f}\notag\\
&=&\int div^3Rm^\pm(\nabla f)e^{-f}+\frac{1}{48}\int|\nabla R|^2e^{-f}\notag\\
&=&-2\int|divRm^\pm|^2e^{-f}+\frac{1}{48}\int|\nabla R|^2e^{-f},
\end{eqnarray}
where we used Lemma \ref{lem:3.10} in the second equality and \eqref{eq:4.x5} in the last.

Since $ div^4W^\pm=0$, it follows that
\begin{equation}\label{eq:5.x4}
\int|divRm^\pm|^2e^{-f}=\frac{1}{96}\int|\nabla R|^2e^{-f}
\end{equation}

Applying Proposition \ref{prop:3.5} to \eqref{eq:5.x4}, we have
\begin{equation}\label{eq:5.7}
\frac{1}{48}\int|\nabla R|^2e^{-f}\leq\frac{1}{96}\int|\nabla R|^2e^{-f},
\end{equation}
i. e.
\begin{equation}\label{eq:5.x8}
\int|\nabla R|^2e^{-f}=0.
\end{equation}

From \eqref{eq:5.x4} and \eqref{eq:5.x8}, we know that
\begin{equation}\label{eq:5.x5}
\int|divRm^\pm|^2e^{-f}=0.
\end{equation}

$\bullet$ The complete non-compact case:

$\phi(t)$ is defined in Section \ref{sec: 4}, i.e. $\phi(t)=1$ on $[0,s]$, $\phi(t)=\frac{2s-t}{s}$ on $(s,2s)$ and $\phi(t)=0$ on $[2s,\infty)$. Integrating by parts, we have
\begin{eqnarray}\label{eq:5.5}
&&\int div^4W^\pm\phi(f)e^{-f}\notag\\
&=&\int div^3W^\pm(\nabla f)(\phi(f)-\phi'(f))e^{-f}\notag\\
&=&\int div^3Rm^\pm(\nabla f)(\phi(f)-\phi'(f))e^{-f}+\frac{1}{48}\int|\nabla R|^2(\phi(f)-\phi'(f))e^{-f}\notag\\
&=&-2\int|divRm^\pm|^2(\phi(f)-\phi'(f))e^{-f}+\frac{1}{48}\int|\nabla R|^2(\phi(f)-\phi'(f))e^{-f},
\end{eqnarray}
where we used Lemma \ref{lem:3.10} in the second equality and \eqref{eq:4.12} in the last.

Since $ div^4W^\pm=0$, it follows that
\begin{equation}\label{eq:5.6}
\int|divRm^\pm|^2(\phi(f)-\phi'(f))e^{-f}=\frac{1}{96}\int|\nabla R|^2(\phi(f)-\phi'(f))e^{-f},
\end{equation}

Note that $\phi(f)-\phi'(f)\geq0$ on $M^4$. Applying Proposition \ref{prop:3.5} to \eqref{eq:5.6} that
\begin{equation}\label{eq:5.7}
\frac{1}{48}\int|\nabla R|^2(\phi(f)-\phi'(f))e^{-f}\leq\frac{1}{96}\int|\nabla R|^2(\phi(f)-\phi'(f))e^{-f},
\end{equation}
i. e.
\begin{equation}\label{eq:5.8}
\int|\nabla R|^2(\phi(f)-\phi'(f))e^{-f}=0.
\end{equation}

From \eqref{eq:5.6} and \eqref{eq:5.8}, we know that
\begin{equation}\label{eq:5.9}
\int|divRm^\pm|^2(\phi(f)-\phi'(f))e^{-f}=0.
\end{equation}

Note that $\phi(f)-\phi'(f)=1$ on the compact set $D(\frac{s}{2}):=\{x\in M^4|f(x)\leq \frac{s}{2}\}$. It follows from \eqref{eq:5.9} that
\begin{equation}
\int_{D(\frac{s}{2})}|divRm^\pm|^2=0.
\end{equation}
Taking $s\rightarrow+\infty$, we obtain
\begin{equation}\label{eq:5.10}
\int|divRm^\pm|^2e^{-f}=0.
\end{equation}

This completes the proof of Theorem \ref{thm:5.4}.
\end{proof}

\section{Integral Identities for 4-dimensional Gradient Expanding Ricci Solitons} 
\label{sec: 6}
In this section, we prove some integral identities for 4-dimensional gradient expanding Ricci solitons which will be needed in the proof of Theorem \ref{thm:1.3} to Theorem \ref{thm:1.4}.

$\phi(t)$ is defined in Section \ref{sec: 4}, i.e. $\phi(t)=1$ on $[0,s]$, $\phi(t)=\frac{2s-t}{s}$ on $(s,2s)$ and $\phi(t)=0$ on $[2s,\infty)$. Lemma \ref{lem:2.5} implies that $-f$ is of quadratic growth, it follows that $\phi(-f)=\frac{s+f}{s}$ has compact support in $M$ for any fixed constant $s>0$.
\begin{proposition}\label{prop:6.1}
Let $(M^4,g,f)$ be a four-dimensional complete non-compact gradient expanding Ricci soliton with non-negative Ricci curvature, then we have
\begin{equation}\label{eq:6.1}
\int R_{i'j'kl}R_{jl}\nabla_if\nabla_kf\phi(-f)e^{f}=0.
\end{equation}
\end{proposition}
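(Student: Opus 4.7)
The plan is to mimic the proof of Proposition \ref{prop:4.1}, with the shrinking weight $\phi(f)e^{-f}$ replaced by its expanding analogue $\phi(-f)e^{f}$. By Lemma \ref{lem:2.5}, the function $-f$ grows quadratically on a complete non-compact expanding soliton with $\mathrm{Ric}\geq 0$, so $\phi(-f)$ and, more generally, $\phi(-f)-a\phi'(-f)$ for any $a>0$ have compact support in $M^4$. Consequently, integration by parts against the weight $[\phi(-f)-a\phi'(-f)]e^{f}$ produces no boundary contributions.

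Starting from Proposition \ref{prop:3.x}, which uses only the soliton equation and therefore holds verbatim in the expanding case, I would multiply
\[R_{i'j'kl}R_{jl}\nabla_i f\nabla_k f=-R_{i'j'kl}\nabla_j\nabla_l f\,\nabla_i f\nabla_k f\]
by $[\phi(-f)-a\phi'(-f)]e^{f}$ and integrate over $M^4$. Shifting the $\nabla_j$ off the Hessian $\nabla_j\nabla_l f$ and onto the remaining factors yields
\[\int R_{i'j'kl}R_{jl}\nabla_i f\nabla_k f\,[\phi(-f)-a\phi'(-f)]e^{f}=\int\nabla_j\bigl(R_{i'j'kl}\nabla_i f\nabla_k f\,[\phi(-f)-a\phi'(-f)]e^{f}\bigr)\nabla_l f.\]

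The crux, exactly as in Proposition \ref{prop:4.1}, is to show that the right-hand integrand vanishes pointwise on $M^4$. The argument splits into two cases: if $|\nabla f|^2$ vanishes on a nonempty open set, analyticity of a gradient Ricci soliton in harmonic coordinates forces $\nabla f\equiv 0$ on $M^4$ and the vanishing is trivial; otherwise $\Theta=\{|\nabla f|\ne 0\}$ is dense and one works in the orthonormal frame with $e_1=\nabla f/|\nabla f|$, in which the contraction $R_{i'j'kl}\nabla_i f\nabla_k f\nabla_l f$ reduces to $R_{1'j'11}|\nabla f|^3$ and vanishes by the antisymmetry of the Riemann tensor in its last two indices; continuity then extends the vanishing to all of $M^4$. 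Combined with the identity above, this gives $\int R_{i'j'kl}R_{jl}\nabla_i f\nabla_k f\,[\phi(-f)-a\phi'(-f)]e^{f}=0$ for every $a>0$, and letting $a\to 0$ produces \eqref{eq:6.1}.

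The main obstacle is carrying the vanishing through the covariant derivative $\nabla_j$, since the adapted-frame computation directly handles only the undifferentiated contraction $R_{i'j'kl}\nabla_i f\nabla_k f\nabla_l f$. To justify this step I would differentiate the scalar identity $R_{i'j'kl}\nabla_i f\nabla_k f\nabla_l f\equiv 0$ using the product rule and then invoke Proposition \ref{prop:3.x} together with the identity $R_{i'j'kj}=0$ (a first Bianchi consequence in the spirit of \eqref{eq:3.1}) and the expanding soliton equation $\nabla_j\nabla_l f=-\tfrac12 g_{jl}-R_{jl}$ to absorb the Hessian contribution that arises. This is precisely the technical content of the shrinking-case proof, and it transfers without modification to the expanding setting.
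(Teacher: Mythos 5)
Your setup is fine and follows the paper's own route: Proposition \ref{prop:3.x} indeed holds for expanding solitons, Lemma \ref{lem:2.5} gives the quadratic growth of $-f$ so that $\phi(-f)-a\phi'(-f)$ has compact support, the integration by parts produces no boundary term, and the reduction $a\to0$ is harmless (the paper simply takes $a=0$ directly). The problem is the decisive step, which you correctly identify as the ``main obstacle'' but do not actually overcome: your proposed repair is circular. Differentiate the (trivially true, by antisymmetry of the last two slots) scalar identity $R_{i'j'kl}\nabla_if\nabla_kf\nabla_lf\,w\equiv0$, with $w$ the weight: every term in the product rule in which $\nabla_j$ hits the curvature, $\nabla_if$, or $w$ still carries the factor $R_{i'j'kl}\nabla_kf\nabla_lf=0$, so the only surviving term is the Hessian one, and one gets exactly
\begin{equation*}
\bigl(\nabla_j(R_{i'j'kl}\nabla_if\nabla_kf\,w)\bigr)\nabla_lf
=-R_{i'j'kl}\nabla_j\nabla_lf\,\nabla_if\nabla_kf\,w
=R_{i'j'kl}R_{jl}\nabla_if\nabla_kf\,w,
\end{equation*}
where the last equality is Proposition \ref{prop:3.x} (equivalently, insert $\nabla_j\nabla_kf=\lambda g_{jk}-R_{jk}$ and use $R_{i'j'jl}=0$ and a relabeling of $k,l$). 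So ``absorbing the Hessian contribution'' does not produce $0$; it reproduces precisely the integrand $R_{i'j'kl}R_{jl}\nabla_if\nabla_kf\,w$ you are trying to show integrates to zero. In other words, after integration by parts the identity you obtain is the tautology $\int Xw=\int Xw$ with $X=R_{i'j'kl}R_{jl}\nabla_if\nabla_kf$, and the claimed pointwise vanishing of the right-hand integrand is equivalent to the pointwise statement $X\equiv0$, which is neither proved by your differentiation argument nor by the adapted-frame substitution.

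To be fair, this is the same weak point as in the paper's Case 2: there the frame values $\nabla_if=|\nabla f|\delta_{i1}$ are substituted \emph{inside} the covariant derivative $\nabla_j$, i.e.\ $(\nabla_jS_{jl})\nabla_lf$ is treated as if it were $\nabla_j(S_{jl}\nabla_lf)$; these differ exactly by the term $S_{jl}\nabla_j\nabla_lf=-Xw$. So you have correctly located the delicate step, but your patch does not close it: as written, your argument (like a literal reading of the paper's) proves nothing beyond Proposition \ref{prop:3.x} itself. A genuine proof would require showing $R_{i'j'kl}R_{jl}\nabla_if\nabla_kf\equiv0$ (or at least that its weighted integral vanishes) by an independent mechanism, not by the integration-by-parts loop above.
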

\begin{proof}
Integrating \eqref{eq:3.x}, we have
\begin{eqnarray*}
&&\int R_{i'j'kl}R_{jl}\nabla_if\nabla_kf\phi(-f)e^{f}\notag\\
&=&-\int R_{i'j'kl}\nabla_j\nabla_lf\nabla_i f\nabla_k f\phi(-f)e^{f}\notag\\
&=&\int\nabla_j(R_{i'j'kl}\nabla_i f\nabla_k f\phi(-f)e^{f})\nabla_lf.
\end{eqnarray*}

We claim that $\nabla_j(R_{i'j'kl}\nabla_i f\nabla_k f\phi(-f)e^{f})\nabla_lf\equiv0$ on $M^4$. The arguments can be divided into two cases:

$\bullet$ Case 1: $|\nabla f|^2=0$ on some nonempty open set. In this case, since any gradient expanding Ricci soliton is analytic in harmonic coordinates, it follows that $|\nabla f|^2=0$ on $M^4$. It is clear that $\nabla_j(R_{i'j'kl}\nabla_i f\nabla_k f\phi(-f)e^{f})\nabla_lf\equiv0$ on $M^4$.

$\bullet$ Case 2: The set $\Theta:=\{x\in M^4|\nabla f(x)\neq0\}$ is dense in $M^4$. Note that $$\nabla_j(R_{i'j'kl}\nabla_i f\nabla_k f\phi(-f)e^{f})\nabla_lf=\nabla_j(R_{1'j'11}|\nabla f|^2\phi(-f)e^{f})|\nabla f|=0$$ on $\Theta$, the continuity implies that $\nabla_j(R_{i'j'kl}\nabla_i f\nabla_k f\phi(-f)e^{f})\nabla_lf\equiv0$ on $M^4$.

This completes the proof of Proposition \ref{prop:6.1}.
\end{proof}

\begin{remark}\label{rmk:6.1}
Note that $\phi(-f)-a\phi'(-f)$ still has compact support in $M^4$ for any $a>0$ and any fixed constant $s>0$. By the same arguments as in the proof of Proposition \ref{prop:6.1}, we know that on a four-dimensional complete non-compact gradient expanding Ricci soliton with non-negative Ricci curvature,
\begin{equation}\label{eq:6.x1}
\int R_{i'j'kl}R_{jl}\nabla_if\nabla_jf(\phi(-f)-a\phi'(-f))e^{f}=0.
\end{equation}
for any $a>0$.
\end{remark}

\begin{proposition}\label{prop:6.2}
Let $(M^4,g,f)$ be a four-dimensional complete non-compact gradient expanding Ricci soliton with non-negative Ricci curvature, then we have
\begin{equation}\label{eq:6.5}
\int R_{i'j'kl}R_{jl}\nabla_i\nabla_kf\phi(-f)e^{f}=-\int R_{i'j'kl}\nabla_iR_{jl}\nabla_kf\phi(-f)e^{f}.
\end{equation}
\end{proposition}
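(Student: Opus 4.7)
The plan is to mirror the proof of Proposition \ref{prop:4.3}, with the shrinking weight $\phi(f)e^{-f}$ replaced by the expanding weight $\phi(-f)e^{f}$ (which is compactly supported on $M^4$ by Lemma \ref{lem:2.5}) and the soliton constant $\lambda=\frac{1}{2}$ replaced by $\lambda=-\frac{1}{2}$. The only new ingredient beyond the shrinking argument is Proposition \ref{prop:6.1} and its Remark \ref{rmk:6.1}, which play the role that Proposition \ref{prop:4.1} and Remark \ref{rmk:4.2} played in the shrinking case.

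The first step is to integrate by parts, moving $\nabla_i$ off the Hessian $\nabla_i\nabla_k f$ and onto the remaining factors. Since $\nabla_i[\phi(-f)e^{f}] = (\phi(-f)-\phi'(-f))\nabla_i f\cdot e^{f}$, this generates three contributions on the right-hand side: (i) a term $-\int(\nabla_i R_{i'j'kl})R_{jl}\nabla_k f\,\phi(-f)e^{f}$ where the derivative has landed on the Riemann tensor, (ii) the desired term $-\int R_{i'j'kl}\nabla_i R_{jl}\nabla_k f\,\phi(-f)e^{f}$, and (iii) a cut-off term $-\int R_{i'j'kl}R_{jl}\nabla_i f\,\nabla_k f\,(\phi(-f)-\phi'(-f))e^{f}$.

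The cut-off term (iii) vanishes directly from \eqref{eq:6.x1} with $a=1$. For term (i) I would substitute the soliton equation $R_{jl}=-\frac{1}{2}g_{jl}-\nabla_j\nabla_l f$ coming from \eqref{eq:1.1}. Both resulting pieces, namely the trace piece $\tfrac{1}{2}\int(\nabla_i R_{i'j'kj})\nabla_k f\,\phi(-f)e^{f}$ and the Hessian piece $\int(\nabla_i R_{i'j'kl})\nabla_j\nabla_l f\,\nabla_k f\,\phi(-f)e^{f}$, vanish because \eqref{eq:3.2} gives $\nabla_i R_{i'j'kl}=0$ with $i$ summed; further contraction against $g^{jl}$ or against $\nabla_j\nabla_l f$ still yields zero. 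Only term (ii) survives, which is precisely the claimed identity \eqref{eq:6.5}.

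The main (and very mild) obstacle is sign bookkeeping when passing from the shrinking to the expanding setting, specifically checking that differentiating the weight $\phi(-f)e^{f}$ produces exactly the combination $(\phi(-f)-\phi'(-f))$ that is annihilated by Proposition \ref{prop:6.1} and its remark. Apart from this, the argument is structurally identical to the proof of Proposition \ref{prop:4.3}.
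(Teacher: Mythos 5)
Your proposal is correct and follows essentially the same route as the paper: integrate by parts on $\nabla_i\nabla_k f$, kill the cut-off term $-\int R_{i'j'kl}R_{jl}\nabla_i f\nabla_k f(\phi(-f)-\phi'(-f))e^{f}$ via \eqref{eq:6.x1} with $a=1$, substitute $R_{jl}=-\tfrac{1}{2}g_{jl}-\nabla_j\nabla_l f$ from \eqref{eq:1.1}, and eliminate the remaining pieces with $\nabla_iR_{i'j'kl}=0$ from \eqref{eq:3.2}. The sign bookkeeping for $\nabla_i[\phi(-f)e^{f}]=(\phi(-f)-\phi'(-f))\nabla_i f\,e^{f}$ and the $+\tfrac{1}{2}$ trace term match the paper's computation exactly.
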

\begin{proof}
By direct computation, we have
\begin{eqnarray*}
&&\int R_{i'j'kl}R_{jl}\nabla_i\nabla_kf\phi(-f)e^{f}\notag\\
&=&-\int \nabla_iR_{i'j'kl}R_{jl}\nabla_kf\phi(-f)e^{f}-\int R_{i'j'kl}\nabla_iR_{jl}\nabla_kf\phi(-f)e^{f}\notag\\
&&-\int R_{i'j'kl}R_{jl}\nabla_if\nabla_kf(\phi(-f)-\phi'(-f))e^{f}\notag\\
&=&\frac{1}{2}\int \nabla_iR_{i'j'kj}\nabla_kf\phi(-f)e^{f}+\int \nabla_iR_{i'j'kl}\nabla_j\nabla_lf\nabla_kf\phi(-f)e^{f}\notag\\
&&-\int R_{i'j'kl}\nabla_iR_{jl}\nabla_kf\phi(-f)e^{f}\notag\\
&=&-\int R_{i'j'kl}\nabla_iR_{jl}\nabla_kf\phi(-f)e^{f},
\end{eqnarray*}
where we used \eqref{eq:6.x1} and \eqref{eq:1.1} in the second equality. Moreover, we used \eqref{eq:3.2} in the last equality.
\end{proof}

\begin{lemma}\label{lem:6.3}
Let $(M^4,g,f)$ be a four-dimensional complete non-compact gradient expanding Ricci soliton with non-negative Ricci curvature, then we have
\begin{equation}\label{eq:6.5}
\int  div^3Rm^\pm(\nabla f)\phi(-f)e^{f}=-2\int|divRm^\pm|^2\phi(-f)e^{f}.
\end{equation}
\end{lemma}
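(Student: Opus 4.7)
The plan is to follow the argument used in the shrinking case (Lemma \ref{lem:4.6}) essentially verbatim, replacing the weight $\phi(f)e^{-f}$ by $\phi(-f)e^{f}$ and invoking the expanding-case integral identities from Section \ref{sec: 6} in place of their shrinking counterparts from Section \ref{sec: 4}. The key auxiliary results (Proposition \ref{prop:3.6}, Proposition \ref{prop:3.x}, Lemma \ref{lem:3.9}, and the decomposition \eqref{eq:c3}) are purely pointwise identities for gradient Ricci solitons, so they apply equally well here.

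First I would expand $div^3 Rm^\pm(\nabla f)$ using \eqref{eq:c3} into an ``untwisted'' piece $\tfrac14 \nabla_k\nabla_j\nabla_l R_{ijkl}\nabla_i f$ and a ``twisted'' piece $\pm \tfrac14 \nabla_k\nabla_j\nabla_l R_{i'j'kl}\nabla_i f$. For the untwisted piece, applying the Ricci soliton identity \eqref{eq:2.2} immediately yields $-\tfrac18\int |divRm|^2 \phi(-f) e^{f}$. For the twisted piece, I would integrate by parts in the $\nabla_k$ derivative; the relevant computation is
\[
\nabla_k\bigl(\phi(-f)e^{f}\bigr) = \bigl(\phi(-f)-\phi'(-f)\bigr)\nabla_k f\, e^{f},
\]
which produces one term containing $\nabla_k\nabla_i f$ and one ``boundary-type'' term containing $\nabla_i f\,\nabla_k f\,(\phi(-f)-\phi'(-f))$.

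Next I would apply Proposition \ref{prop:3.6} to replace $\nabla_j\nabla_l R_{i'j'kl}$ by $-R_{i'j'kl}R_{jl}$ in both of the two resulting integrals. The boundary-type integral then reads $\int R_{i'j'kl}R_{jl}\nabla_i f\nabla_k f (\phi(-f)-\phi'(-f))e^{f}$, which is exactly what Remark \ref{rmk:6.1} (i.e.\ \eqref{eq:6.x1}) shows vanishes. The remaining term involves $\int R_{i'j'kl}R_{jl}\nabla_i\nabla_k f\,\phi(-f)e^{f}$, which by Proposition \ref{prop:6.2} equals $-\int R_{i'j'kl}\nabla_i R_{jl}\nabla_k f\,\phi(-f)e^{f}$.

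Assembling the two contributions gives
\[
\int div^3 Rm^\pm(\nabla f)\phi(-f)e^{f} = -\tfrac{1}{8}\int \bigl(|divRm|^2 \pm 2 R_{i'j'kl}\nabla_i R_{jl}\nabla_k f\bigr)\phi(-f)e^{f},
\]
and one last application of Lemma \ref{lem:3.9} identifies the integrand as $16|divRm^\pm|^2$, producing the claimed factor $-2$. I do not expect any genuine obstacle: everything is set up so that Section \ref{sec: 6}'s identities slot into the Section \ref{sec: 4} argument. The only point worth double-checking is sign bookkeeping during the integration by parts, since $e^{f}$ and $\phi(-f)$ both differentiate with opposite signs to $e^{-f}$ and $\phi(f)$; however, these two sign flips combine so that the weight factor $\phi(-f)-\phi'(-f)$ appearing in the boundary term has the same form required for Remark \ref{rmk:6.1} to apply, and the final identity comes out with the same numerical coefficient as in the shrinking case.
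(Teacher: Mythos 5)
Your proposal is correct and follows essentially the same route as the paper: decompose via \eqref{eq:c3}, use \eqref{eq:2.2} for the untwisted part, integrate by parts and apply Proposition \ref{prop:3.6}, kill the $(\phi(-f)-\phi'(-f))$ term with \eqref{eq:6.x1}, convert the Hessian term with Proposition \ref{prop:6.2}, and conclude with Lemma \ref{lem:3.9}. Your sign bookkeeping for $\nabla_k(\phi(-f)e^{f})=(\phi(-f)-\phi'(-f))\nabla_k f\,e^{f}$ matches the paper's computation exactly.
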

\begin{proof}
By direct computation, we have
\begin{eqnarray*}
&&\int div^3Rm^\pm(\nabla f)\phi(-f)e^{f}\notag\\
&=&\frac{1}{4}\int\nabla_k\nabla_j\nabla_lR_{ijkl}\nabla_i f\phi(-f)e^{f}\pm\frac{1}{4}\int\nabla_k\nabla_j\nabla_lR_{i'j'kl}\nabla_i f\phi(-f)e^{f}\notag\\
&=&-\frac{1}{8}\int|divRm|^2\phi(-f)e^{f}\mp\frac{1}{4}\int\nabla_j\nabla_lR_{i'j'kl}\nabla_k\nabla_i f\phi(-f)e^{f}\notag\\
&&\mp\frac{1}{4}\int\nabla_j\nabla_lR_{i'j'kl}\nabla_i f\nabla_kf(\phi(-f)-\phi'(-f))e^{f}\notag\\
&=&-\frac{1}{8}\int|divRm|^2\phi(-f)e^{f}\pm\frac{1}{4}\int R_{i'j'kl}R_{jl}\nabla_i\nabla_k f\phi(-f)e^{f}\notag\\
&&\pm\frac{1}{4}\int R_{i'j'kl}R_{jl}\nabla_i f\nabla_kf(\phi(-f)-\phi'(-f))e^{f}\notag\\
&=&-\frac{1}{8}\int|divRm|^2\phi(-f)e^{f}\mp\frac{1}{4}\int R_{i'j'kl}\nabla_iR_{jl}\nabla_k f\phi(-f)e^{f}\notag\\
&=&-2\int|divRm^\pm|^2\phi(-f)e^{f},
\end{eqnarray*}
where we used \eqref{eq:c3} in the first equality, \eqref{eq:2.2} in the second equality and Proposition \ref{prop:3.6} in the third equality. Moreover, we used \eqref{eq:6.x1} and Proposition \ref{prop:6.2} in the fourth equality. We used Lemma \ref{lem:3.9} in the last equality.
\end{proof}

\begin{remark}\label{rmk:6.2}
Note that $\phi(-f)-a\phi'(-f)$ still has compact support in $M^4$ for any $a>0$ and any fixed constant $s>0$. By the same arguments as in the proof of Lemma \ref{lem:6.3}, we know that on a four-dimensional complete non-compact gradient expanding Ricci soliton with
\begin{equation}\label{eq:6.x2}
\int  div^3Rm^\pm(\nabla f)(\phi(-f)-a\phi'(-f))e^{f}=-2\int|divRm^\pm|^2(\phi(-f)-a\phi'(-f)e^{f}.
\end{equation}
for any $a>0$.
\end{remark}
\section{Main Results of Gradient Expanding Ricci Soltions} 
\label{sec: 7}
In this section, we finish the proof of Theorem \ref{thm:1.5} and Theorem \ref{thm:1.6}.

$\phi(t)$ is defined in Section \ref{sec: 4}, i.e. $\phi(t)=1$ on $[0,s]$, $\phi(t)=\frac{2s-t}{s}$ on $(s,2s)$, $\phi=0$ on $[2s,\infty)$.
\begin{theorem}\label{thm:7.2}
Let $(M^4,g,f)$ be a four-dimensional complete non-compact gradient expanding Ricci soliton with non-negative Ricci curvature. If $div^4Rm^\pm=0$, then $(M^4,g,f)$ is a finite quotient of the Gaussian expanding soliton $\mathbb{R}^4$.
\end{theorem}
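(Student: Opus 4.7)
The plan is to proceed in three stages, in direct analogy with Theorem 5.2 for the shrinking case but with the weight $e^{-f}$ replaced by $e^{f}$ so that the cut-off $\phi(-f)$ (compactly supported by Lemma 2.5) is the correct test function. The first step is to upgrade the fourth-order vanishing $div^{4}Rm^{\pm}=0$ to the first-order vanishing $divRm^{\pm}\equiv 0$. Since
\[
\nabla_{i}\bigl(\phi(-f)\,e^{f}\bigr) = \bigl[\phi(-f)-\phi'(-f)\bigr]\, e^{f}\,\nabla_{i}f,
\]
one integration by parts converts $\int (div^{4}Rm^{\pm})\,\phi(-f)\,e^{f}$ into $-\int div^{3}Rm^{\pm}(\nabla f)\,[\phi(-f)-\phi'(-f)]\, e^{f}$, which by Remark 6.2 (with $a=1$) equals $2\int |divRm^{\pm}|^{2}[\phi(-f)-\phi'(-f)]\, e^{f}$. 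Using $div^{4}Rm^{\pm}=0$, the pointwise inequality $\phi(-f)-\phi'(-f)\geq 0$, and the fact that this weight equals $1$ on $\{x\in M^{4}:-f(x)\leq s\}$, I would conclude $|divRm^{\pm}|^{2}\equiv 0$ on that set; letting $s\to\infty$ and invoking analyticity in harmonic coordinates yields $divRm^{\pm}\equiv 0$ on all of $M^{4}$.

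The second step passes from $divRm^{\pm}\equiv 0$ to the much stronger conclusion $Ric\equiv 0$. Proposition 3.5 gives $|\nabla R|^{2}\leq 48|divRm^{\pm}|^{2}=0$, so $R$ is constant. The standard soliton identity
\[
\Delta R - \langle\nabla f,\nabla R\rangle = 2\lambda R - 2|Ric|^{2},
\]
obtained by differentiating the contracted second Bianchi identity and using $Ric+\nabla^{2}f=\lambda g$, then collapses to $|Ric|^{2}=\lambda R=-\tfrac{1}{2}R$. But $Ric\geq 0$ forces $R\geq 0$, hence $|Ric|^{2}\leq 0$, and we conclude $R\equiv 0$ and $Ric\equiv 0$.

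Finally, with $Ric\equiv 0$, the contracted second Bianchi identity gives $divRm=0$, so Lemma 2.6(3) implies the metric is radially flat with constant scalar curvature, and Lemma 2.7 then gives rigidity. The rigid splitting of the universal cover takes the form $\mathbb{R}^{k}\times N^{4-k}$ with $N$ Einstein of constant $\lambda=-\tfrac{1}{2}$; since the hypothesis $Ric\geq 0$ rules out such an Einstein factor, the universal cover must be $\mathbb{R}^{4}$ and $(M^{4},g,f)$ is a finite quotient of the Gaussian expanding soliton, as claimed. The main hurdle in this argument is the sign combination in the second step: it is precisely the expander condition $\lambda<0$ working against $Ric\geq 0$ that upgrades constant scalar curvature to Ricci-flatness, whereas the first and third steps are essentially direct replays of the Section 6 integration-by-parts machinery and the standard Petersen--Wylie rigidity theory.
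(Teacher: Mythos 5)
Your proof is correct and takes essentially the same route as the paper: the same integration by parts against $\phi(-f)e^{f}$ via Remark \ref{rmk:6.2} to force $divRm^{\pm}\equiv0$ (with analyticity), the same identity $\Delta_f R=-R-2|Ric|^2$ played against $Ric\geq0$ to get $Ric\equiv0$, and the same Petersen--Wylie rigidity argument to land on a finite quotient of the Gaussian soliton $\mathbb{R}^4$. The only cosmetic deviations are that you deduce $\nabla R=0$ from Proposition \ref{prop:3.5} instead of the trace identity \eqref{eq:3.3}, and you invoke condition (3) of Lemma \ref{lem:2.6} (harmonic curvature) where the paper uses condition (2); both variants are valid.
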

\begin{proof}
Integrating by parts, we have
\begin{eqnarray}\label{eq:7.1}
&&\int div^4Rm^\pm\phi(-f)e^{f}\notag\\
&=&-\int div^3Rm^\pm(\nabla f)(\phi(-f)-\phi'(-f))e^{f}\notag\\
&=&2\int |divRm^\pm|^2(\phi(-f)-\phi'(-f))e^{f},
\end{eqnarray}
where we used \eqref{eq:6.x2}.

Note that $div^4Rm^\pm=0$, $\phi(-f)-\phi'(-f)\geq0$ on $M^4$ and $\phi(-f)-\phi'(-f)=1$ on the compact set $E(\frac{s}{2}):=\{x\in M^4|-f(x)\leq \frac{s}{2}\}$. It follows from \eqref{eq:7.1} that
\begin{equation}\label{eq:7.x1}
\int_{E(\frac{s}{2})} |divRm^\pm|^2e^{f}=0.
\end{equation}

Taking $s\rightarrow+\infty$ in \eqref{eq:7.x1}, we know that
\begin{equation}
\int|divRm^\pm|^2e^{-f}=0.
\end{equation}
It follows that $divRm^\pm=0$ $a.$ $e.$ on $M^4$. Since any gradient expanding Ricci soliton is analytic in harmonic coordinates, we have $divRm^\pm\equiv0$ on $M^4$.  It follows \eqref{eq:3.3} that $\nabla R\equiv0$ on $M^4$, i.e. $R$ is a constant on $M^4$.

Since $Ric\geq0$, $R$ is a non-negative constant. It follows from the fact of $\Delta_fR=-R-2|Ric|^2$ that $|Ric|=0$ on $M^4$, i.e. $M^4$ has vanishing Ricci curvature.

Hence, Condition (2) in Lemma \ref{lem:2.6} holds. It follows that $(M^n,g,f)$ is radially flat and has constant scalar curvature. By Lemma \ref{lem:2.7}, we have $(M^n,g,f)$ is rigid. Since $Ric\geq0$ on $M^4$, $(M^4,g,f)$ is a finite quotient of the Gaussian expanding soliton $\mathbb{R}^4$.
\end{proof}

\begin{theorem}\label{thm:7.4}
Let $(M^4,g,f)$ be a four-dimensional complete non-compact gradient expanding Ricci soliton with non-negative Ricci curvature. If $div^4W^\pm=0$, then $(M^4,g,f)$ is a finite quotient of the Gaussian expanding soliton $\mathbb{R}^4$.
\end{theorem}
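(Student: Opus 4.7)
The plan is to mirror the proof of Theorem \ref{thm:7.2}, using Lemma \ref{lem:3.10} and Proposition \ref{prop:3.5} to transfer information from the $W^\pm$ divergence to the $Rm^\pm$ divergence, and then invoke the conclusion already established in Theorem \ref{thm:7.2}.

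First I would fix the same cutoff $\phi$ as in Section \ref{sec: 6}, noting that $\phi(-f)$ has compact support by Lemma \ref{lem:2.5}. Integration by parts against the weighted measure $e^f$ gives
\begin{equation*}
\int div^4W^\pm \,\phi(-f)\,e^{f} \;=\; -\int div^3W^\pm(\nabla f)\,\bigl(\phi(-f)-\phi'(-f)\bigr)\,e^{f},
\end{equation*}
exactly as in \eqref{eq:7.1}. Applying Lemma \ref{lem:3.10} on the right and then Remark \ref{rmk:6.2} (the cutoff version of Lemma \ref{lem:6.3}) to the $div^3Rm^\pm(\nabla f)$ piece, this becomes
\begin{equation*}
\int div^4W^\pm \,\phi(-f)\,e^{f} \;=\; 2\int |divRm^\pm|^2\bigl(\phi(-f)-\phi'(-f)\bigr)e^{f} \;-\; \frac{1}{48}\int |\nabla R|^2\bigl(\phi(-f)-\phi'(-f)\bigr)e^{f}.
\end{equation*}

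Next I would use $div^4W^\pm=0$ together with Proposition \ref{prop:3.5}. Since $\phi(-f)-\phi'(-f)\geq 0$, the hypothesis forces
\begin{equation*}
\frac{1}{24}\int |\nabla R|^2\bigl(\phi(-f)-\phi'(-f)\bigr)e^{f} \;\leq\; 2\int |divRm^\pm|^2\bigl(\phi(-f)-\phi'(-f)\bigr)e^{f} \;=\; \frac{1}{48}\int |\nabla R|^2\bigl(\phi(-f)-\phi'(-f)\bigr)e^{f},
\end{equation*}
which yields $\int |\nabla R|^2\bigl(\phi(-f)-\phi'(-f)\bigr)e^{f}=0$, and hence $\int |divRm^\pm|^2\bigl(\phi(-f)-\phi'(-f)\bigr)e^{f}=0$. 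Since $\phi(-f)-\phi'(-f)=1$ on $E(\tfrac{s}{2})$, sending $s\to+\infty$ yields $\int |divRm^\pm|^2 e^{f}=0$, so by analyticity $divRm^\pm\equiv 0$ on $M^4$.

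From this point the argument is identical to the final paragraph of the proof of Theorem \ref{thm:7.2}: $divRm^\pm=0$ combined with \eqref{eq:3.3} gives $\nabla R\equiv 0$, and then the expanding-soliton identity $\Delta_f R = -R-2|Ric|^2$, together with $Ric\geq 0$, forces $Ric\equiv 0$. Conditions (2) of Lemma \ref{lem:2.6} and Lemma \ref{lem:2.7} then imply rigidity, so $(M^4,g,f)$ is a finite quotient of the Gaussian expanding soliton $\mathbb{R}^4$. The only genuinely new input beyond Theorem \ref{thm:7.2} is Lemma \ref{lem:3.10} coupled with Proposition \ref{prop:3.5}; the latter has the crucial sign/coefficient structure that makes the inequality $\tfrac{1}{24}\leq\tfrac{1}{48}$ only consistent when both sides vanish, so this is really the main point of the proof. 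No new technical obstacle is anticipated beyond being careful with the cutoff derivatives, which the paper has already systematically handled.
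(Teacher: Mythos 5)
Your proposal is correct and follows essentially the same route as the paper: the same cutoff integration by parts with $\phi(-f)$, Lemma \ref{lem:3.10} together with Remark \ref{rmk:6.2}, and then Proposition \ref{prop:3.5} forcing $\int|\nabla R|^2(\phi(-f)-\phi'(-f))e^{f}=0$, with the endgame borrowed from Theorem \ref{thm:7.2}. The only cosmetic difference is that you pass through $divRm^\pm\equiv0$ and recover $\nabla R\equiv0$ via \eqref{eq:3.3}, whereas the paper concludes $R$ is constant directly from the vanishing of the $|\nabla R|^2$ integral; both are valid and your coefficient bookkeeping ($\tfrac{1}{24}$ versus $\tfrac{1}{48}$) is just the paper's inequality scaled by two.
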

\begin{proof}
From the proof of  Theorem \ref{thm:7.2}, we only need to show that $R$ is a constant on $M^4$.

Integrating by parts, we have
\begin{eqnarray}\label{eq:7.5}
&&\int div^4W^\pm\phi(-f)e^{f}\notag\\
&=&-\int div^3W^\pm(\nabla f)(\phi(-f)-\phi'(-f))e^{f}\notag\\
&=&-\int div^3Rm^\pm(\nabla f)(\phi(-f)-\phi'(-f))e^{f}-\frac{1}{48}\int|\nabla R|^2(\phi(-f)-\phi'(-f))e^{f}\notag\\
&=&2\int|divRm^\pm|^2(\phi(-f)-\phi'(-f))e^{f}-\frac{1}{48}\int|\nabla R|^2(\phi(-f)-\phi'(-f))e^{f},
\end{eqnarray}
where we used Lemma \ref{lem:3.10} in the second equality and \eqref{eq:6.x2} in the last.

Since $ div^4W^\pm=0$, it follows that
\begin{equation}\label{eq:7.6}
\int|divRm^\pm|^2(\phi(-f)-\phi'(-f))e^{f}=\frac{1}{96}\int|\nabla R|^2(\phi(-f)-\phi'(-f))e^{f}.
\end{equation}

Note that $\phi(-f)-\phi'(-f)\geq0$ on $M^4$. Applying Proposition \ref{prop:3.5} to \eqref{eq:7.6} that
\begin{equation}\label{eq:7.7}
\frac{1}{48}\int|\nabla R|^2(\phi(-f)-\phi'(-f))e^{f}\leq\frac{1}{96}\int|\nabla R|^2(\phi(-f)-\phi'(-f))e^{f},
\end{equation}
i. e.
\begin{equation}\label{eq:7.8}
\int|\nabla R|^2(\phi(f)-\phi'(f))e^{f}=0.
\end{equation}

Note that $\phi(-f)-\phi'(-f)=1$ on the compact set $E(\frac{s}{2}):=\{x\in M^4|-f(x)\leq \frac{s}{2}\}$. It follows from \eqref{eq:7.8} that
\begin{equation}\label{eq:7.9}
\int_{E(\frac{s}{2})}|\nabla R|^2e^{f}=0.
\end{equation}
Taking $s\rightarrow+\infty$, we obtain
\begin{equation}\label{eq:7.10}
\int|\nabla R|^2e^{f}=0.
\end{equation}

It follows that $\int|\nabla R|^2e^{f}=0$, i.e. $\nabla R=0$ $a.$ $e.$ on $M^4$. Since any gradient expanding Ricci soliton is analytic in harmonic coordinates, we have $\nabla R\equiv0$ on $M^4$, i.e. $R$ is a constant on $M^4$.

This completes the proof of  Theorem \ref{thm:7.4}.
\end{proof}

\section{Integral Identities for 4-dimensional Gradient Steady Ricci Solitons} 
\label{sec: 8}
In this section, we prove some integral identities for 4-dimensional gradient steady Ricci solitons which will be needed in the proof of Theorem \ref{thm:1.5} and Theorem \ref{thm:1.6}.

In the following, let $B_r$ be a geodesic ball with radius $r$ and $\nu$ be the outward unit normal vector field to $B_r$.
\begin{proposition}\label{prop:8.1}
Let $(M^4,g,f)$ be a four-dimensional complete non-compact gradient steady Ricci soliton, then for every $\alpha\in\mathbb{R}$ we have
\begin{equation}\label{eq:8.1}
\int_{B_r} R_{i'j'kl}R_{jl}\nabla_if\nabla_kfe^{\alpha f}=0.
\end{equation}
\end{proposition}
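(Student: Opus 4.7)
The plan is to mimic the proofs of Propositions~\ref{prop:4.1} and \ref{prop:6.1}, adapted to $\lambda=0$ and to a compact ball $B_r$ in place of an exhaustion by cutoff-weighted integrals. The algebraic starting point is Proposition~\ref{prop:3.x}, which, being valid for every $\lambda$, yields the pointwise identity
\[
R_{i'j'kl}R_{jl}\nabla_if\nabla_kf=-R_{i'j'kl}\nabla_j\nabla_lf\,\nabla_if\,\nabla_kf.
\]
Multiplying by $e^{\alpha f}$ and integrating over $B_r$ reduces the claim to
\[
\int_{B_r}R_{i'j'kl}\nabla_j\nabla_lf\,\nabla_if\,\nabla_kf\,e^{\alpha f}=0.
\]
I would attack this by integrating by parts in $\nabla_j$, transferring the derivative off the Hessian $\nabla_j\nabla_lf$ and onto the factor $R_{i'j'kl}\nabla_if\nabla_kf\,e^{\alpha f}$.

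The integration by parts produces a bulk integrand of $\nabla_j\bigl(R_{i'j'kl}\nabla_if\nabla_kf\,e^{\alpha f}\bigr)\nabla_lf$ together with a boundary integral of the form $\int_{\partial B_r}R_{i'j'kl}\nabla_if\nabla_kf\nabla_lf\,e^{\alpha f}\nu_j$. Both pieces are annihilated by the same pointwise identity
\[
R_{i'j'kl}\nabla_if\nabla_kf\nabla_lf\equiv 0\quad\text{on }M^4.
\]
To see this, on the set $\Theta=\{x\in M^4 \mid \nabla f(x)\neq 0\}$ I choose the adapted orthonormal frame with $e_1=\nabla f/|\nabla f|$, so that $\nabla_mf=|\nabla f|\delta_{m1}$; the contraction then collapses to $R_{1'j'11}|\nabla f|^3$, which vanishes because the Riemann tensor is antisymmetric in its last pair of indices, both of which equal $1$. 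Either $\nabla f$ vanishes on a nonempty open set, in which case the analyticity of gradient Ricci solitons in harmonic coordinates forces $f$ to be constant on $M^4$ and the proposition is trivial, or $\Theta$ is dense and the identity propagates to all of $M^4$ by continuity. This identity immediately eliminates the boundary integral, and the same adapted-frame computation used in the proofs of Propositions~\ref{prop:4.1} and \ref{prop:6.1} shows the bulk integrand also vanishes pointwise, via
\[
\nabla_j\bigl(R_{i'j'kl}\nabla_if\nabla_kf\,e^{\alpha f}\bigr)\nabla_lf=\nabla_j\bigl(R_{1'j'11}|\nabla f|^2 e^{\alpha f}\bigr)|\nabla f|=0 \quad\text{on }\Theta.
\]

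The main conceptual difference from the shrinking and expanding analogues is the appearance of a genuine boundary integral over $\partial B_r$: in those cases the cutoff function $\phi(\pm f)$ delivered compact support inside $M^4$ and excluded any boundary term, whereas here the pointwise vanishing of $R_{i'j'kl}\nabla_if\nabla_kf\nabla_lf$ is precisely what makes the boundary contribution harmless. The delicate step, as in the earlier propositions, is justifying that the frame-based substitution survives under the covariant derivative $\nabla_j$, which I would handle by interpreting the relevant expressions as tensor identities verified pointwise in the adapted frame and then extending to $M^4$ by continuity. No growth hypothesis on the curvature is needed at this stage, since the argument is purely local on $B_r$; the integrability condition $\int|Rm|^2 e^{\alpha f}<+\infty$ appearing in Theorems~\ref{thm:1.5} and \ref{thm:1.6} will only enter later when one wishes to pass to the limit $r\to\infty$.
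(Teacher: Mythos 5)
Your proposal follows essentially the same route as the paper's own proof: it starts from the pointwise identity of Proposition \ref{prop:3.x}, integrates by parts in $\nabla_j$ over $B_r$, kills the boundary term via $R_{i'j'kl}\nabla_if\nabla_kf\nabla_lf\equiv 0$ (the paper drops this term with the same implicit reasoning), and disposes of the bulk term by the identical two-case argument (analyticity when $\nabla f$ vanishes on an open set, density of $\Theta$ plus the adapted-frame computation $\nabla_j(R_{1'j'11}|\nabla f|^2e^{\alpha f})|\nabla f|=0$ otherwise). No substantive difference from the paper's argument, including the delicate frame-substitution step, which you flag but handle exactly as the paper does.
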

\begin{proof}
Integrating \eqref{eq:3.x}, we have
\begin{eqnarray*}
&&\int_{B_r} R_{i'j'kl}R_{jl}\nabla_if\nabla_kfe^{\alpha f}\notag\\
&=&-\int_{B_r} R_{i'j'kl}\nabla_j\nabla_lf\nabla_i f\nabla_k fe^{\alpha f}\notag\\
&=&-\int_{\partial B_r} R_{i'j'kl}\nabla_lf\nabla_i f\nabla_k f\nu_je^{\alpha f}+\int_{B_r}\nabla_j(R_{i'j'kl}\nabla_i f\nabla_k fe^{\alpha f})\nabla_lf\notag\\
&=&\int_{B_r}\nabla_j(R_{i'j'kl}\nabla_i f\nabla_k fe^{\alpha f})\nabla_lf.
\end{eqnarray*}

We claim that $\nabla_j(R_{i'j'kl}\nabla_i f\nabla_k fe^{\alpha f})\nabla_lf\equiv0$ on $M^4$. The arguments can be divided into two cases:

$\bullet$ Case 1: $|\nabla f|^2=0$ on some nonempty open set. In this case, since any gradient shrinking Ricci soliton is analytic in harmonic coordinates, it follows that $|\nabla f|^2=0$ on $M^4$. It is clear that $\nabla_j(R_{i'j'kl}\nabla_i f\nabla_k fe^{\alpha f})\nabla_lf\equiv0$ on $M^4$.

$\bullet$ Case 2: The set $\Theta:=\{x\in M^4|\nabla f(x)\neq0\}$ is dense in $M^4$. Note that $$\nabla_j(R_{i'j'kl}\nabla_i f\nabla_k fe^{\alpha f})\nabla_lf=\nabla_j(R_{1'j'11}|\nabla f|^2e^{\alpha f})|\nabla f|=0$$ on $\Theta$, the continuity implies that $\nabla_j(R_{i'j'kl}\nabla_i f\nabla_k fe^{\alpha f})\nabla_lf\equiv0$ on $M^4$.

This completes the proof of Proposition \ref{prop:8.1}.
\end{proof}

\begin{proposition}\label{prop:8.2}
Let $(M^4,g,f)$ be a four-dimensional complete non-compact gradient steady Ricci soliton, then for every $\alpha\in\mathbb{R}$ we have
\begin{equation}\label{eq:8.5}
\int_{B_r} R_{i'j'kl}R_{jl}\nabla_i\nabla_kfe^{\alpha f}=\int_{\partial B_r}R_{i'j'kl}R_{jl}\nabla_kf\nu_ie^{\alpha f}-\int_{B_r} R_{i'j'kl}\nabla_iR_{jl}\nabla_kfe^{\alpha f}.
\end{equation}
\end{proposition}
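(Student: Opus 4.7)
The plan is to mimic the proofs of Propositions \ref{prop:4.3} and \ref{prop:6.2}, integrating by parts on the $\nabla_i$ derivative inside $R_{i'j'kl}R_{jl}\nabla_i\nabla_kf\,e^{\alpha f}$. The difference from the shrinking and expanding settings is that there is no cutoff $\phi$ available in the steady case, so I work on the fixed domain $B_r$ and keep the boundary integral on $\partial B_r$ explicit.

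Applying the divergence theorem to the vector field with $i$-th component $R_{i'j'kl}R_{jl}\nabla_kf\,e^{\alpha f}$ and expanding via the product rule, I expect to obtain
\begin{eqnarray*}
\int_{B_r} R_{i'j'kl}R_{jl}\nabla_i\nabla_kf\,e^{\alpha f}
&=&\int_{\partial B_r}R_{i'j'kl}R_{jl}\nabla_kf\,\nu_i\,e^{\alpha f}
-\int_{B_r}\nabla_i R_{i'j'kl}\,R_{jl}\nabla_kf\,e^{\alpha f}\\
&&-\int_{B_r}R_{i'j'kl}\nabla_i R_{jl}\nabla_kf\,e^{\alpha f}
-\alpha\int_{B_r}R_{i'j'kl}R_{jl}\nabla_if\nabla_kf\,e^{\alpha f}.
\end{eqnarray*}
Two of the interior terms must then be shown to vanish. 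The $\nabla_i R_{i'j'kl}$-term vanishes by \eqref{eq:3.2}: after summing in $i$ for fixed $j,k,l$, the factor $R_{jl}\nabla_kf\,e^{\alpha f}$ is independent of $i$, so one can pull the $i$-sum through and use $\sum_i \nabla_iR_{i'j'kl}=0$. The term with $\alpha\,\nabla_if\nabla_kf$ vanishes by Proposition \ref{prop:8.1}, which is valid for every $\alpha\in\mathbb R$. Combining these two reductions yields exactly \eqref{eq:8.5}.

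I do not foresee a genuine obstacle; the steady case is formally simpler than the shrinking/expanding cases because there is no cutoff function $\phi$ producing $\phi'$-type remainders that must be matched against an analogue of \eqref{eq:4.x2} or \eqref{eq:6.x1}. The only mild subtlety is a bookkeeping one: ensuring that the contraction convention on the primed indices is handled correctly when invoking \eqref{eq:3.2}, i.e.\ that the identity is applied by summing over $i$ with $j,k,l$ held fixed, exactly as in the analogous step of Proposition \ref{prop:4.3}. Once this is set up, no further use of the soliton equation is required, since in the steady case $\lambda=0$ and the substitution $R_{jl}=-\nabla_j\nabla_lf$ used in the shrinking/expanding proofs is not needed to close the argument.
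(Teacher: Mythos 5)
Your proposal is correct and is essentially identical to the paper's proof: integrate by parts in the index $i$ over $B_r$, keep the boundary term, and kill the $\nabla_iR_{i'j'kl}$-term by \eqref{eq:3.2} and the $\alpha$-term by Proposition \ref{prop:8.1}. No differences worth noting.
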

\begin{proof}
By direct computation, we have
\begin{eqnarray*}
&&\int_{B_r} R_{i'j'kl}R_{jl}\nabla_i\nabla_kfe^{\alpha f}\notag\\
&=&\int_{\partial B_r} R_{i'j'kl}R_{jl}\nabla_kf\nu_ie^{\alpha f}-\int_{B_r} \nabla_iR_{i'j'kl}R_{jl}\nabla_kfe^{\alpha f}\notag\\
&&-\int_{B_r} R_{i'j'kl}\nabla_iR_{jl}\nabla_kfe^{\alpha f}-\alpha\int_{B_r} R_{i'j'kl}R_{jl}\nabla_if\nabla_kfe^{\alpha f}\notag\\
&=&\int_{\partial B_r} R_{i'j'kl}R_{jl}\nabla_kf\nu_ie^{\alpha f}-\int_{B_r} R_{i'j'kl}\nabla_iR_{jl}\nabla_kfe^{\alpha f},
\end{eqnarray*}
where we used \eqref{eq:3.2} and Proposition \ref{prop:8.1}.
\end{proof}

\begin{lemma}\label{lem:8.3}
Let $(M^4,g,f)$ be a complete non-compact four-dimensional gradient steady Ricci soliton with $\int|Rm|^2e^{\alpha f}<+\infty$ for some constant $\alpha\in\mathbb{R}$, then we have
\begin{equation}\label{eq:8.6}
\int div^3Rm^\pm(\nabla f)e^{\alpha f}\leq-2\int|divRm^\pm|^2e^{\alpha f}
\end{equation}
with
\begin{equation}\label{eq:8.7}
\int|divRm^\pm|^2e^{\alpha f}<+\infty.
\end{equation}
\end{lemma}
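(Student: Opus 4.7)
The plan is to mirror the proof of Lemma \ref{lem:4.6} from the shrinking case, replacing the cutoff weight $\phi(f)e^{-f}$ by $e^{\alpha f}$ and replacing the compact-support trick by integration over geodesic balls $B_r$. The price is that every integration by parts now produces a boundary contribution on $\partial B_r$, and the $L^2$ hypothesis $\int|Rm|^2e^{\alpha f}<+\infty$ will have to do two jobs: guarantee finiteness of the right-hand side of \eqref{eq:8.6}, and provide, via the coarea formula, a sequence of radii along which the boundary terms die.

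\textbf{Integrability.} The computation inside the proof of Lemma \ref{lem:3.9} gives the pointwise identity $16|divRm^\pm|^2=|R_{ij1l}\pm R_{i'j'1l}|^2|\nabla f|^2$ at regular points of $f$, so $|divRm^\pm|^2\leq C|Rm|^2|\nabla f|^2$. On a complete steady gradient Ricci soliton, Hamilton's identity $R+|\nabla f|^2=C_0$, combined with the nonnegativity of $R$ (a standard result of B.-L. Chen), bounds $|\nabla f|$ uniformly; hence $|divRm^\pm|^2\leq C|Rm|^2$, and the hypothesis $\int|Rm|^2e^{\alpha f}<+\infty$ yields \eqref{eq:8.7}.

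\textbf{Main identity on $B_r$.} Starting from
\[
\int_{B_r}div^3Rm^\pm(\nabla f)e^{\alpha f}=\tfrac14\int_{B_r}\nabla_k\nabla_j\nabla_lR_{ijkl}\nabla_i f\,e^{\alpha f}\pm\tfrac14\int_{B_r}\nabla_k\nabla_j\nabla_lR_{i'j'kl}\nabla_i f\,e^{\alpha f},
\]
I would handle the first piece pointwise via \eqref{eq:2.2} to obtain $-\tfrac18\int_{B_r}|divRm|^2e^{\alpha f}$, and integrate by parts in $\nabla_k$ in the second. Proposition \ref{prop:3.6} substitutes $\nabla_j\nabla_lR_{i'j'kl}=-R_{i'j'kl}R_{jl}$; the extra term generated by differentiating $e^{\alpha f}$ is proportional to $\int_{B_r}R_{i'j'kl}R_{jl}\nabla_i f\nabla_k f\,e^{\alpha f}$, which vanishes by Proposition \ref{prop:8.1}; and Proposition \ref{prop:8.2} replaces the remaining Hessian integral by $-\int_{B_r}R_{i'j'kl}\nabla_iR_{jl}\nabla_kf\,e^{\alpha f}$ plus a further $\partial B_r$ contribution. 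Combining the bulk pieces pointwise by Lemma \ref{lem:3.9} produces
\[
\int_{B_r}div^3Rm^\pm(\nabla f)e^{\alpha f}=-2\int_{B_r}|divRm^\pm|^2e^{\alpha f}+\mathrm{Bdy}(r),
\]
where, after using Proposition \ref{prop:3.6} on one of the boundary integrands and then $|Ric|\leq|Rm|$, $|\nabla f|\leq C$, one has $|\mathrm{Bdy}(r)|\leq C\int_{\partial B_r}|Rm|^2e^{\alpha f}\,dS$.

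\textbf{Limit as $r\to\infty$ (main obstacle).} The delicate step is disposing of $\mathrm{Bdy}(r)$, since a priori the left-hand integrand need not be absolutely integrable over $M$. By the coarea formula,
\[
\int_0^{\infty}\!\!\int_{\partial B_s}|Rm|^2e^{\alpha f}\,dS\,ds=\int_M|Rm|^2e^{\alpha f}<+\infty,
\]
so $\liminf_{s\to\infty}\int_{\partial B_s}|Rm|^2e^{\alpha f}\,dS=0$ and there exists a sequence $r_k\to+\infty$ along which $\mathrm{Bdy}(r_k)\to0$. Monotone convergence on the nonnegative bulk term then forces $\int_{B_{r_k}}div^3Rm^\pm(\nabla f)e^{\alpha f}$ to converge, and passing to the limit delivers \eqref{eq:8.6}, with the $\leq$ understood as $\liminf_{k\to\infty}\int_{B_{r_k}}div^3Rm^\pm(\nabla f)e^{\alpha f}\leq -2\int_M|divRm^\pm|^2e^{\alpha f}$ (equality along the chosen subsequence).
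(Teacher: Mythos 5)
Your proof follows essentially the same route as the paper: the same splitting via \eqref{eq:c3}, the pointwise use of \eqref{eq:2.2} for the first piece, integration by parts over $B_r$ with Proposition \ref{prop:3.6}, Propositions \ref{prop:8.1}--\ref{prop:8.2}, and Lemma \ref{lem:3.9} for the second, boundary terms controlled by $|Rm||Ric||\nabla f|e^{\alpha f}$, and $|\nabla f|$ bounded via $R\geq0$ and $R+|\nabla f|^2=\mathrm{const}$. Your coarea/subsequence handling of the boundary term (taking $\liminf$ along radii $r_k\to\infty$) is if anything more careful than the paper, which simply asserts that $\int_{\partial B_r}|Rm||Ric||\nabla f|e^{\alpha f}$ tends to zero from the finiteness of the corresponding integral over $M$; otherwise the two arguments coincide.
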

\begin{proof}
It follows from \eqref{eq:c1} that
\[\int|divRm^\pm|^2e^{\alpha f}\leq\int|divRm|^2e^{\alpha f}<+\infty.\]

By direct computation, we have
\begin{eqnarray}\label{eq:8.x}
&&\int_{B_r} div^3Rm^\pm(\nabla f)e^{\alpha f}\notag\\
&=&\frac{1}{4}\int_{B_r}\nabla_k\nabla_j\nabla_lR_{ijkl}\nabla_i fe^{\alpha f}\pm\frac{1}{4}\int_{B_r}\nabla_k\nabla_j\nabla_lR_{i'j'kl}\nabla_i fe^{\alpha f}\notag\\
&=&-\frac{1}{8}\int_{B_r}|divRm|^2e^{\alpha f}\pm\frac{1}{4}\int_{\partial B_r}\nabla_j\nabla_lR_{i'j'kl}\nabla_i f\nu_ke^{\alpha f}\notag\\
&&\mp\frac{1}{4}\int_{B_r}\nabla_j\nabla_lR_{i'j'kl}\nabla_k\nabla_i fe^{\alpha f}\mp\frac{\alpha}{4}\int_{B_r}\nabla_j\nabla_lR_{i'j'kl}\nabla_i f\nabla_kfe^{\alpha f}\notag\\
&=&-\frac{1}{8}\int_{B_r}|divRm|^2e^{\alpha f}\mp\frac{1}{4}\int_{\partial B_r}R_{i'j'kl}R_{jl}\nabla_i f\nu_ke^{\alpha f}\notag\\
&&\pm\frac{1}{4}\int_{B_r} R_{i'j'kl}R_{jl}\nabla_k\nabla_i fe^{\alpha f}\pm\frac{\alpha}{4}\int_{B_r}R_{i'j'kl}R_{jl}\nabla_i f\nabla_kfe^{\alpha f}\notag\\
&=&\mp\frac{1}{4}\int_{\partial B_r}R_{i'j'kl}R_{jl}\nabla_i f\nu_ke^{\alpha f}\pm\frac{1}{4}\int_{\partial B_r}R_{i'j'kl}R_{jl}\nabla_k f\nu_ie^{\alpha f}\notag\\
&&-\frac{1}{8}\int_{B_r}|divRm|^2e^{\alpha f}\mp\frac{1}{4}\int_{B_r} R_{i'j'kl}\nabla_iR_{jl}\nabla_k fe^{\alpha f}\notag\\
&=&\mp\frac{1}{4}\int_{\partial B_r}R_{i'j'kl}R_{jl}\nabla_i f\nu_ke^{\alpha f}\pm\frac{1}{4}\int_{\partial B_r}R_{i'j'kl}R_{jl}\nabla_k f\nu_ie^{\alpha f}-2\int_{B_r}|divRm^\pm|^2e^{\alpha f}\notag\\
&\leq&\frac{1}{2}\int_{\partial B_r}|Rm||Ric||\nabla f|e^{\alpha f}-2\int_{B_r}|divRm^\pm|^2e^{\alpha f},
\end{eqnarray}
where we used \eqref{eq:c3} in the first equality, \eqref{eq:2.2} in the second equality and Proposition \ref{prop:3.6} in the third. Moreover, we used Proposition \ref{prop:8.1} and Proposition \ref{prop:8.2} in the fourth equality. We used Lemma \ref{lem:3.9} in the last equality. Therefore, we obtained \eqref{eq:8.5}.

Since $R\geq0$ (see B. L. Chen \cite{cbl}) and $R+|\nabla f|^2=Const.$, $|\nabla f|$ is bounded. Therefore,
\begin{equation}\label{eq:8.9}
\int |Rm||Ric||\nabla f|e^{\alpha f}\leq c\int |Rm|^2e^{\alpha f}<+\infty.
\end{equation}
It follows that
\begin{equation}\label{eq:8.8}
\lim_{r\rightarrow+\infty}\int_{\partial B_r}|Rm||Ric||\nabla f|e^{\alpha f}=0.
\end{equation}

By taking $r\rightarrow+\infty$ in \eqref{eq:8.x} and using \eqref{eq:8.8}, we obtain \eqref{eq:8.6}.
\end{proof}
\section{Main Results of Gradient Steady Ricci Solitons} 
\label{sec: 9}
In this section, we prove Theorem \ref{thm:1.5} and \ref{thm:1.6}.
\begin{theorem}\label{thm:9.1}
Let $(M^4,g,f)$ be a non-trivial complete non-compact four-dimensional gradient steady Ricci soliton with $\int|Rm|^2e^{\alpha f}<+\infty$ for some constant $\alpha\in\mathbb{R}$. If $div^3Rm^\pm(\nabla f)=0$, then $(M^4,g,f)$ is a finite quotient of $\mathbb{R}^4$.
\end{theorem}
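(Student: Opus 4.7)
The plan is to funnel the pointwise hypothesis $div^3Rm^\pm(\nabla f)\equiv 0$ through the main estimate of Section~\ref{sec: 8}. Once the left-hand side of \eqref{eq:8.6} is zero, and since $|divRm^\pm|^2\geq 0$ while $e^{\alpha f}>0$, Lemma~\ref{lem:8.3} forces
\begin{equation*}
\int|divRm^\pm|^2 e^{\alpha f}=0.
\end{equation*}
Hence $divRm^\pm=0$ almost everywhere, and the real-analyticity of gradient Ricci solitons in harmonic coordinates upgrades this to $divRm^\pm\equiv 0$ on all of $M^4$.

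Next I would extract scalar curvature information by tracing. From \eqref{eq:3.3}, tracing $divRm^\pm$ over the indices $(i,k)$ produces $\tfrac{1}{8}\nabla R$, regardless of the choice of sign, so $\nabla R\equiv 0$ and $R$ is constant on $M^4$. I would then invoke the standard soliton identity $\Delta_f R=2\lambda R-2|Ric|^2$, which for a steady soliton ($\lambda=0$) reduces to $\Delta_f R=-2|Ric|^2$. With $R$ constant the left-hand side vanishes, forcing $Ric\equiv 0$. Substituting back into \eqref{eq:1.1} with $\lambda=0$ gives $\nabla^2 f\equiv 0$, so $\nabla f$ is a parallel vector field on $M^4$.

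Finally, I would appeal to de Rham's splitting theorem. The non-triviality hypothesis says $f$ is non-constant, so $\nabla f$ is nowhere zero, since its norm is preserved by parallel transport on the connected manifold $M^4$. The parallel unit field $\nabla f/|\nabla f|$ therefore splits the universal cover of $(M^4,g)$ isometrically as $\mathbb{R}\times N^3$, where $N^3$, being the integral leaf perpendicular to a parallel direction inside a Ricci-flat four-manifold, is itself complete and Ricci-flat. In dimension three Ricci-flat implies flat, so the universal cover is isometric to $\mathbb{R}^4$ and $(M^4,g,f)$ is a finite quotient of $\mathbb{R}^4$.

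The essential obstacle has already been absorbed into Lemma~\ref{lem:8.3}: one must control the geodesic-ball boundary term $\int_{\partial B_r}|Rm||Ric||\nabla f|e^{\alpha f}$ that arises when integrating $div^3Rm^\pm$ by parts on the non-compact manifold, which is handled by the weighted $L^2$-bound $\int|Rm|^2 e^{\alpha f}<+\infty$ combined with $R+|\nabla f|^2=\mathrm{const}$ and Chen's $R\geq 0$. Once that estimate is in place, the present theorem reduces to the short chain of pointwise identities and the de Rham decomposition above.
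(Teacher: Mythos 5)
Your proposal is correct, and its first half coincides with the paper's own argument: you feed the hypothesis into the inequality \eqref{eq:8.6} of Lemma \ref{lem:8.3} to get $\int|divRm^\pm|^2e^{\alpha f}=0$, upgrade $divRm^\pm=0$ a.e.\ to an identity by analyticity, trace via \eqref{eq:3.3} to get $R$ constant, and use $\Delta_f R=-2|Ric|^2$ to conclude $Ric\equiv0$. Where you diverge is the endgame. The paper, after obtaining $Ric\equiv0$, observes that the second Bianchi identity gives $divRm=0$ (harmonic curvature), invokes Lemma \ref{lem:2.6}(3) to get radial flatness and constant scalar curvature, and then Lemma \ref{lem:2.7} (Petersen--Wylie rigidity) together with non-triviality to identify the soliton as a finite quotient of $\mathbb{R}^4$. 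You instead argue directly: $Ric\equiv0$ and $\lambda=0$ force $\nabla^2 f\equiv0$, so $\nabla f$ is parallel, non-triviality makes it nowhere vanishing, de Rham splits the universal cover as $\mathbb{R}\times N^3$ with $N^3$ complete simply connected and Ricci-flat, and three-dimensional Ricci-flatness means flatness, so the universal cover is $\mathbb{R}^4$. This is more self-contained (no appeal to the rigidity lemmas) and makes the role of non-triviality transparent; the paper's route has the advantage of staying within its quoted toolbox and of treating the conclusion uniformly with the shrinking and expanding cases. One small caveat applies equally to both arguments: strictly speaking the splitting (or rigidity) only exhibits $M^4$ as a quotient of flat $\mathbb{R}^4$ by its deck group, and the finiteness of that group is not justified beyond what the paper itself asserts, so your matching the stated conclusion introduces no new gap.
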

\begin{proof}
It follows from \eqref{eq:8.6} that
\[0\leq\int|divRm^\pm|^2e^{\alpha f}\leq-\int div^3Rm^\pm(\nabla f)e^{\alpha f}=0,\]
i.e.
\[\int|divRm^\pm|^2e^{\alpha f}=0.\]

It follows that $divRm^\pm=0$ $a.$ $e.$ on $M^4$. Since any gradient steady Ricci soliton is analytic in harmonic coordinates, we have $divRm^\pm\equiv0$. From \eqref{eq:3.3}, we know that $\nabla R\equiv0$ on $M^4$, i.e. $R$ is constant on $M^4$.

Since $0=\Delta_fR=-2|Ric|^2$, $(M^n,g,f)$ has vanishing Ricci curvature. By the second Bianchi identity, we have
\begin{equation}
\nabla_lR_{ijkl}=\nabla_jR_{ik}-\nabla_iR_{jk}=0,
\end{equation}
i.e. $divRm=0$ on $M^n$, which implies $M^4$ is radially flat and has constant scalar curvature. By Lemma \ref{lem:2.7}, we have $(M^4,g,f)$ is rigid. Note that $(M^4,g,f)$ is non-trivial, we conclude that $(M^4,g,f)$ is a finite quotient of $\mathbb{R}^4$.
\end{proof}

\begin{theorem}\label{thm:9.2}
Let $(M^4,g,f)$ be non-trivial complete non-compact a four-dimensional gradient steady Ricci soliton with $\int|Rm|^2e^{\alpha f}<+\infty$ for some constant $\alpha\in\mathbb{R}$. If $div^3W^\pm(\nabla f)=0$, then $(M^4,g,f)$ is a finite quotient of $\mathbb{R}^4$.
\end{theorem}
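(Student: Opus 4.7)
The strategy is to import the argument of Theorem \ref{thm:9.1} after first using Lemma \ref{lem:3.10} to convert the hypothesis $div^3W^\pm(\nabla f)=0$ into a pointwise identity involving $div^3Rm^\pm(\nabla f)$ and $|\nabla R|^2$, and then to exploit the numerical gap between the factors in Lemma \ref{lem:8.3} and Proposition \ref{prop:3.5} to conclude $\nabla R\equiv 0$.

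First, Lemma \ref{lem:3.10} combined with the hypothesis yields the pointwise identity
\[div^3Rm^\pm(\nabla f) = -\tfrac{1}{48}|\nabla R|^2.\]
Before integrating I check that the right-hand side is integrable against $e^{\alpha f}$: on a steady soliton $\nabla R = 2\,Ric(\nabla f)$ and $R+|\nabla f|^2$ is a constant (so $|\nabla f|$ is bounded), giving $|\nabla R|^2 \leq c|Ric|^2 \leq c|Rm|^2$, hence $\int |\nabla R|^2 e^{\alpha f}<+\infty$ by the hypothesis $\int |Rm|^2 e^{\alpha f}<+\infty$.

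Next, integrating the displayed identity against $e^{\alpha f}$, applying Lemma \ref{lem:8.3}, and finally Proposition \ref{prop:3.5} produces the chain
\[-\tfrac{1}{48}\int|\nabla R|^2 e^{\alpha f} \;=\; \int div^3Rm^\pm(\nabla f)\,e^{\alpha f} \;\leq\; -2\int|divRm^\pm|^2 e^{\alpha f} \;\leq\; -\tfrac{1}{24}\int|\nabla R|^2 e^{\alpha f}.\]
Since $-\tfrac{1}{24}<-\tfrac{1}{48}$, this forces $\int|\nabla R|^2 e^{\alpha f}=0$, and then also $\int|divRm^\pm|^2 e^{\alpha f}=0$. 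By analyticity of the soliton in harmonic coordinates, both $\nabla R\equiv 0$ and $divRm^\pm\equiv 0$ on $M^4$.

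From here the argument is identical to the conclusion of Theorem \ref{thm:9.1}. Constant scalar curvature plugged into the standard steady-soliton identity $\Delta_f R = -2|Ric|^2$ yields $Ric\equiv 0$. The contracted second Bianchi identity $\nabla_l R_{ijkl} = \nabla_j R_{ik} - \nabla_i R_{jk}$ then gives $divRm\equiv 0$, so $(M^4,g,f)$ is radially flat with constant scalar curvature; Lemma \ref{lem:2.7} delivers rigidity, and the non-triviality hypothesis together with the fact that Ricci-flat manifolds of dimension $\leq 3$ are automatically flat identifies $(M^4,g,f)$ as a finite quotient of $\mathbb{R}^4$. The main pressure point of the proof is the numerical mismatch between $\tfrac{1}{48}$ and $\tfrac{1}{24}$ in the inequality chain above; the only technical care required is the integrability bookkeeping that legitimizes Lemma \ref{lem:8.3}, which is already handled by the hypothesis.
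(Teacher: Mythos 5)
Your proposal is correct and follows essentially the same route as the paper: integrate $div^3W^\pm(\nabla f)=0$ via Lemma \ref{lem:3.10}, combine Lemma \ref{lem:8.3} with the gap in Proposition \ref{prop:3.5} ($\tfrac{1}{96}$ vs. $\tfrac{1}{48}$) to force $\nabla R\equiv 0$, then conclude exactly as in Theorem \ref{thm:9.1}. Your explicit integrability check for $\int|\nabla R|^2e^{\alpha f}$ (via $\nabla R=2Ric(\nabla f)$ and bounded $|\nabla f|$) is a small but welcome addition that the paper handles implicitly through \eqref{eq:8.7}.
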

\begin{proof}
From the proof of Theorem \ref{thm:9.1}, we only need to show that $R$ is a constant on $M^4$.

Integrating \eqref{eq:4.1}, we have
\begin{eqnarray}\label{eq:9.2}
0&=&\int div^3W^\pm(\nabla f)e^{\alpha f}\notag\\
&=&\int div^3Rm^\pm(\nabla f)e^{\alpha f}+\frac{1}{48}\int|\nabla R|^2e^{\alpha f}\notag\\
&=&-2\int|divRm^\pm|^2e^{\alpha f}+\frac{1}{48}\int|\nabla R|^2e^{\alpha f},
\end{eqnarray}
where we used Lemma \ref{lem:3.10} in the second equality and Lemma \ref{lem:8.3} in the last.

Applying Proposition \ref{prop:3.5} to \eqref{eq:9.2}, we obtain that
\[\frac{1}{48}\int|\nabla R|^2e^{\alpha f}\leq\int|divRm^\pm|^2e^{\alpha f}=\frac{1}{96}\int|\nabla R|^2e^{\alpha f},\]
i.e.
\[\int|\nabla R|^2e^{\alpha f}=0.\]
It follows that $\nabla R=0$ $a.$ $e.$ on $M^4$. Since any gradient steady Ricci soliton is analytic in harmonic coordinates, we have $\nabla R\equiv0$, i.e. $R$ is a constant on $M^4$.

This completes the proof of Theorem \ref{thm:9.2}.
\end{proof}
\label{sec:acknowledgements}





\begin{thebibliography}{99}

\bibitem{ref1} S. Brendle. Rotational symmetry of self-similar solutions to the Ricci flow, Invent. Math., 2013, 194(3): 731-764.
\bibitem{ref2} S. Brendle. Rotational symmetry of Ricci solitons in higher dimensions, J. Diff. Geom., 2013, 97(2): 191-214.
\bibitem{ref3} H. D. Cao, Q. Chen. On locally conformally flat gradient steady Ricci solitons, Trans. Amer. Math. Soc., 2012, 364(5): 2377-2391.
\bibitem{cao Bach} H. D. Cao, Q. Chen. On Bach-flat gradient shrinking Ricci solitons, Duke Math. J., 2013, 162(6): 1149-1169.
\bibitem{cao Bach s} H. D. Cao, G. Catino, Q. Chen, C. Mantegazza, L. Mazzieri. Bach-flat gradient steady Ricci solitons, Calc. Var., 2014, 49: 125-138.
\bibitem{caozhou} H. D. Cao, D. Zhou. On complete gradient shrinking Ricci solitons, J. Diff. Geom., 2010, 85(2): 175-186.
\bibitem{xcao} X. Cao, B. Wang, and Z. Zhang. On locally conformally flat gradient shrinking Ricci solitons. Commun. Contemp. Math., 2011, 13(2):269¨C282.
\bibitem{cbl} B. L. Chen. Strong uniqueness of the Ricci flow, J. Diff. Geom., 2009, 86(2): 362-382.
\bibitem{ref6} G. Catino, P. Mastrolia, D. D. Monticelli. Classification of expanding and steady Ricci solitons with integral curvature decay, arXiv:1501.01517v1 [math.DG].
\bibitem{CMM4} G. Catino, P. Mastrolia, D. D. Monticelli. Gradient Ricci solitons with vanishing conditions on Weyl, J. Math. Pures Appl., 2017, 108(1): 1-13.
\bibitem{chenwang6} X. Chen, Y. Wang. On four-dimensional anti-self-dual gradient Ricci solitons, J. Geom. Anal., 2015, 25: 1335-1343.
\bibitem{EMN7} M. Eminenti, G. La Nave, C. Mantegazza. Ricci solitons: the equation point of view, Manuscripta Math., 2008, 127: 345-367.
\bibitem{sref8} M. Fern\'{a}ndez-L\'{o}pez, E. Garc\'{i}a-R\'{i}o. Rigidity of shrinking Ricci solitons, Math. Z., 2011, 269(1): 461-466.
\bibitem{MS} O. Munteanu, N. Sesum. On gradient Ricci solitons. J. Geom. Anal., 2013, 23: 539-561.
\bibitem{naber12} A. Naber. Noncompact shrinking 4-solitons with nonnegative curvature, J. Reine Angew. Math., 2007, 645(2): 125-153.
\bibitem{rigid} P. Petersen, W. Wylie. Rigidity of gradient Ricci solitons, Pac. J. Math., 2009, 241(2): 329-345.
\bibitem{ref12} P. Petersen, W. Wylie. On the classification of gradient Ricci solitons, Geom. Topol., 2010, 14(4): 2277-2300.
\bibitem{wuwuwylie16} J. Y. Wu, P. Wu, W. Wylie. Gradient shrinking Ricci solitons of half harmonic Weyl curvature, arXiv: 1410.7303v1 [math.DG].
\bibitem{yangzhang} F. Yang, L. Zhang. Rigidity of gradient shrinking Ricci solitons, arXiv: 1705.09754v1 [math.DG].
\bibitem{zhang17} Z. H. Zhang. Gadient shrinking solitons with vanishing Weyl tensor, Pac. J. Math., 2009, 242(1): 189-200.
\end{thebibliography}

\end{document}